\documentclass{commat}

\usepackage[markup=underlined]{changes}
\usepackage[shortlabels]{enumitem}
\usepackage{todonotes}

\DeclareMathOperator{\disc}{disc}
\DeclareMathOperator{\Gal}{Gal}
\DeclareMathOperator{\Tr}{Tr}
\newcommand{\bigperp}{\mathop{\mathpalette\bigpirp\relax}\displaylimits}
\newcommand{\bigpirp}[2]{\vcenter{\hbox{\scalebox{\ifx#1\displaystyle2.1\else1.5\fi}{$#1\perp$}}}}
\newcommand{\cO}{\mathcal{O}}
\newcommand{\Q}{\mathbb{Q}}
\newcommand{\R}{\mathbb{R}}
\newcommand{\Z}{\mathbb{Z}}

\EditInfo{February 2, 2023}{May 18, 2023}{Camilla Hollanti and Lenny Fukshansky}

\VOLUME{31}
\NUMBER{2}
\firstpage{81}
\DOI{https://doi.org/10.46298/cm.10896}
\title{%
    Universal quadratic forms and indecomposables in number fields: A survey 
    }

\author{%
    V\' \i t\v ezslav Kala
    }

\authorinfo{%
    Charles University, Faculty of Mathematics and Physics, Department of Algebra}{%
    vitezslav.kala@matfyz.cuni.cz
    }

\abstract{%
    We give an overview of universal quadratic forms and lattices, focusing on the recent developments over the rings of integers in totally real number fields. In particular, we discuss indecomposable algebraic integers as one of the main tools.
    }

\keywords{%
    Universal quadratic form, quadratic lattice, totally real number field, indecomposable algebraic integer, continued fraction
    }

\msc{%
    11A55, 11E12, 11E20, 11E25, 11H06, 11H55, 11R04, 11R11, 11R16, 11R21, 11R80
    }

\begin{document}

The goal of this survey article is to give an overview of the arithmetic theory of universal quadratic forms. I will primarily focus on the results over number fields obtained since 2015.
	For other surveys focusing on different facets of the area, see~\cite{Ea, Han, Kim}.

	While I try to explain the broad ideas behind the proofs and the tools that are used, many details are nevertheless missing and there are numerous simplifications.
	The interested reader is therefore always encouraged to look into the original papers or to contact me.
	Overall the notes are more aimed at a~junior audience rather than the experts in the field (who might at least prefer to start reading only in Sections~\ref{sec:4} or~\ref{sec:5}).

	The paper is primarily based on the notes from my lectures at the \textit{XXIII International Workshop for Young Mathematicians} in Krakow, Poland (and from some of my other talks).
	Parts of the text are also taken from the introduction to my habilitation thesis~\cite{Ka3}.

	\section{Introduction}\label{sec:1}

	The study of representations of integers by quadratic forms has a~long history; let's briefly start here with a~few highlights.

	One can perhaps argue that they were first considered as Pythagorean triples, i.e., solutions of the Diophantine equation $x^2 +y^2 =z^2$, or, equivalently, representations of 0 by the indefinite ternary form $x^2 +y^2 -z^2$. A list of 15 such triples occurs already on the Babylonian clay tablet Plimpton~322~\cite{Rob} from around 1800~BC!

	The Pell equation, i.e., representation of $1$ and other small integers by the binary form $x^2 -dy^2$ (for some $d\in\Z_{>0}$ that is not a~square),
	was considered as early as 400~BC by Greek mathematicians in connection with approximating $\sqrt 2, \sqrt 3$ by rational numbers.
	Later it was studied, e.g., by Archimedes (3rd~century BC) and Diophantus (3rd~century AD) and in India by
	Brahmagupta (7th~century AD) and Bhaskara (12th~century AD).

\medskip

	The modern European history starts with giants such as Fermat, Euler, and Gauss, who considered representations of primes by binary definite forms $x^2 +dy^2$ (for $d\in\Z_{>0}$)~\cite{Cox} and obtained results such as \emph{a prime number $p$ is of the form $x^2 +y^2$ if and only if $p=2$ or $p\equiv 1\pmod 4$}.

	In 1770 Lagrange proved the Four--Square Theorem stating that \emph{every positive integer $n$ is of the form $x^2 +y^2 +z^2 +w^2$}; Jacobi then in 1834 gave a~formula for the number of representations of $n$
	in this form. In a~similar vein, Legendre in 1790 proved the Three--Square Theorem that characterizes the integers of the form $x^2 +y^2 +z^2$ (for much more information on the history, see, e.g.,~\cite{Di2}).
	These results eventually led, e.g., to the still active Waring problem, and to
	using modular forms for studying the representations of integers by quadratic forms.

 \medskip

	A quadratic form representing all positive integers is called \emph{universal}. Lagrange's Theorem thus says that $ x^2 +y^2 +z^2 +w^2 $ is a~universal quadratic form. {The early 20th century saw the characterization of all universal quaternary diagonal positive forms $ ax^2 +by^2 +cz^2 +dw^2 $ by Ramanujan~\cite{Ra}, and an extension of this work to non-diagonal forms by Dickson~\cite{Di1}, who also introduced the term ``universal quadratic form".} Among such forms are, for example, $ x^2 +2y^2 +4z^2 +dw^2 $ for $ 1 \leq d \leq 14 $. These results were further expanded by Willerding~\cite{Wi} to cover also the cases of classical quaternary forms (although her list contains a~number of errors, it was nevertheless a~big step towards the full classification).

	While no ternary positive definite quadratic form is universal (for local reasons), indefinite quadratic forms tend to be universal more easily. For example, any quadratic form $ x^2 -y^2 -dz^2 $ is universal as long as 4 does not divide $ d $ because $ x^2 -y^2 =(x+y)(x-y) $ represents all odd numbers. They form a~separate area of study, and we will return to them later only very briefly.

	\section{The 15- and 290-Theorems}\label{sec:2}

	Let's first discuss in some detail the case of quadratic forms over the ring of integers $\Z$.	Recall that
	a \emph{quadratic form} of rank $ r $ over $\Z$ is a~polynomial
\begin{equation}\label{eq:form}
		Q(x_1, \dots, x_r) = \sum_{1\leq i \leq j \leq r} a_{ij}x_ix_j,\qquad a_{ij}\in \Z.	
\end{equation}
	Typically we require the form to be \emph{positive definite}, meaning that $ Q(v)>0 $ for all $ v \in \Z^r $, $ v \neq 0 $.

	 We attach the \emph{Gram matrix} to $ Q $, given by
\begin{equation}\label{eq:gram}
	M = M_Q =
\begin{pmatrix}
		a_{11}&\frac{1}{2}a_{12}&\cdots&\frac{1}{2}a_{1r}\\
		\frac{1}{2}a_{12}&a_{22}&\cdots&\frac{1}{2}a_{2r}\\
		\vdots&\vdots&\ddots&\vdots\\
		\frac{1}{2}a_{1r}&\frac{1}{2}a_{2r}&\cdots&a_{rr}
\end{pmatrix}
.
\end{equation}
	Taking $ v \in \Z^r $ to be a~column vector $ (x_1, \dots, x_r)^t $ we have
\[
	Q(v) = v^t Mv.	
\]

	The quadratic form $ Q $ is called \emph{classical} if all the entries of $ M $ are integers, i.e., if $ a_{ij} $ are even for all $ i \neq j $.

	Each quadratic form has an associated \emph{bilinear form} $ B $ defined by
\[
	Q(v+w) = Q(v)+Q(w)+2B(v,w),\qquad v, w \in \Z^r.
\]

	A positive definite form satisfies the Cauchy--Schwarz inequality: For all $ v $ and $ w $,
\[
	Q(v)Q(w) \geq B(v,w)^2.	
\]

	In the 90's, Conway, Miller, Schneeberger, and Simons, and then Bhargava and Hanke \cite{Bha, BH} came up with the following fascinating criteria for universality.
	
\begin{theorem}
		Let $Q$ be a~positive definite quadratic form over $\Z$. Then:
\begin{enumerate}[(a)]
\item  (15-Theorem, Conway--Schneeberger, $ \sim 1995 $)
			If $Q$ is classical and represents the integers	
\[
			1, 2, 3, 5, 6, 7, 10, 14\text{, and }15,			
\]
then it is universal.
			\item (290-Theorem, Bhargava--Hanke, $ \sim 2005 $ \textrm{\cite{BH}})
			If $Q$ represents the integers
\begin{gather*}
				1, 2, 3, 5, 6, 7, 10, 13, 14, 15, 17, 19, 21, 22, 23, 26,\\
				29, 30,
				31, 34, 35, 37, 42, 58, 93, 110, 145, 203\text{, and }290,
\end{gather*}
then it is universal.
			\item Both of these lists of integers are minimal in the sense that for each integer $n$ in the list, there exists a~corresponding quadratic form that represents all of $\mathbb Z_{>0}\setminus \{n\}$, but does not represent $n$.	
\end{enumerate}
	
\end{theorem}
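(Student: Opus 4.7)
The plan is to handle (a) and (b) together by the \emph{escalation} method pioneered by Conway--Schneeberger, and to deduce (c) from witnesses produced by the same procedure.

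First I would set up escalation. For any positive definite form $Q$, define its \emph{truant} $t(Q)$ to be the smallest positive integer $Q$ fails to represent. A \emph{child} of $Q$ is a positive definite form $Q'$ of rank one greater whose Gram matrix extends $M_Q$ by a single new row and column and which represents $t(Q)$. By the Cauchy--Schwarz inequality
\[
Q(v)Q(w) \geq B(v,w)^2,
\]
the new off-diagonal entries of $M_{Q'}$ are bounded in terms of $t(Q)$ and the new diagonal entry, so each $Q$ has only finitely many children up to equivalence. Iterating from the zero form (whose truant is $1$) produces a finite rooted tree of \emph{escalator} lattices in each dimension.

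The crucial combinatorial step, performed by a mix of hand argument and computation, is to show that every root-to-leaf branch terminates by dimension four: every four-dimensional escalator is either universal, or else a~finite amount of further escalation yields universal descendants. This leaves a finite list of \emph{universal escalators} (a~few hundred in the classical case (a), several thousand in case (b)), and the set of all truants that arise in the tree coincides precisely with the critical list in the theorem. Any form $Q$ satisfying the hypothesis of (a) or (b) therefore contains such an escalator as a sublattice, hence is itself universal.

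For each universal escalator $L$ one must still prove universality as an independent assertion. In (a) the classical constraint keeps the count small enough that elementary descent together with genus theory suffices. In (b) one studies the theta series
\[
\theta_L(q) = \sum_{v \in L} q^{Q(v)},
\]
a modular form of weight $2$ on a~congruence subgroup of level dividing $4\disc(L)$. Writing $\theta_L = E_L + C_L$ as Eisenstein plus cusp part, one bounds the Eisenstein coefficients from below via local representation densities and the cuspidal Fourier coefficients from above via Deligne's bound, concluding that all sufficiently large $n$ are represented; the finitely many remaining small $n$ are then checked by direct computation. The main obstacle is precisely this modular-form bookkeeping: organising thousands of escalators into tractable classes and extracting numerical bounds strong enough to make the finite computations actually terminate is the technical heart of the Bhargava--Hanke argument in (b).

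Finally, for (c), I would use the tree itself: for each critical $n$, the escalation history exhibits an escalator whose truant is $n$ together with a~sibling construction that avoids representing $n$; completing this sibling (possibly in higher dimension) to a positive definite form $Q_n$ that represents every element of $\Z_{>0} \setminus \{n\}$ gives the required witness, and the existence of such $Q_n$ is exactly why $n$ is forced into the list. The construction of these witnesses is a~finite case check carried out alongside the escalation.
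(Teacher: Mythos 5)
Your proposal follows essentially the same route as the paper: escalation by truants with Cauchy--Schwarz bounding the new Gram entries, a finite tree of escalators whose truants form the critical list, universality of the escalators via genus theory in the classical case and theta series (Eisenstein plus cusp decomposition with Deligne-type bounds) in the 290 case, and minimality witnesses read off from the tree. This matches the sketch given in the paper (which notes that escalation must continue to rank 5 for part (a) and rank 7 for part (b)), so there is nothing to add.
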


	While the 15-Theorem in part~(a) is not too hard to prove, the 290-Theorem in part~(b) is very challenging, not only because of the large amount of computations needed.

	There have been a~number of further exciting developments related to universal quad\-ratic forms over $\Z$. For example, the conjectural 451-Theorem by Rouse~\cite{Ro} says that \textit{if a~positive definite form represents the integers $1,3,5,\dots,451$, then it represents all \emph{odd} positive integers}. This result has been proved only under the assumption that each of the ternary forms $x^2 + 2y^2 + 5z^2 + xz, 	x^2 + 3y^2 + 6z^2 + xy + 2yz, 	x^2 + 3y^2 + 7z^2 + xy + xz$ represents all odd positive integers (that seems very hard to establish).

	\subsection*{Escalations}
	We give a~sketch of Bhargava's proof of the 15-Theorem. The idea is to ``build up" a~universal quadratic form $ Q $ by gradually adding variables.

	In order for $ Q $ to be universal, it must represent 1, and so it must contain $ x^2 $. Slightly more precisely, a~linear change of variables does not change universality and gives us $ x^2 $ (this will be made more precise soon, once we discuss quadratic lattices).

	Now $ x^2 $ is clearly not universal as it does not represent 2, hence $ Q $ must contain $ 2y^2 $ (again after a~change of variables). We get the form $ x^2 +2axy+2y^2 $, where the coefficient of $ xy $ is $ 2a $ because we require the form to be classical, and so the corresponding Gram matrix is
\[
\begin{pmatrix}
		1&a\\
		a&2
	
\end{pmatrix}
.
\]

	What are the possible values for $ a~$? By the Cauchy--Schwarz inequality $ 1\cdot 2 \geq a^2 $, which leaves the possibilities $ a~= 0, 1, -1 $ with the corresponding Gram matrices
\[
\begin{pmatrix}
		1&0\\
		0&2
\end{pmatrix}
,
\begin{pmatrix}
		1&1\\
		1&2
\end{pmatrix}
,
\begin{pmatrix}
		1&-1\\
		-1&2
\end{pmatrix}
.
\]

	The quadratic forms $ x^2 +2xy+2y^2 $ and $ x^2 -2xy+2y^2 $ are equivalent by the change of variables $ y \mapsto -y $ so we can forget about the third matrix. As for the second matrix, we can reduce the quadratic form by changing variables:	
\[
	x^2 +2xy+2y^2 = (x+y)^2 +y^2 = X^2 +Y^2.
\]

	Note that, in terms of matrices, the Gram matrix of the resulting form is $ C^t MC $ for an invertible matrix $ C $. It can be obtained from $ M $ by successively applying the same row and column operations:
\[
\begin{pmatrix}
		1&1\\
		1&2
	
\end{pmatrix}
\sim
\begin{pmatrix}
		1&0\\
		1&1
	
\end{pmatrix}
\sim
\begin{pmatrix}
		1&0\\
		0&1
	
\end{pmatrix}
.
\]

	We see that after two steps of escalations, we have two candidate forms $ x^2 +2y^2 $ and $ x^2 +y^2 $. Since they do not represent 5, respectively 3, we pass to the matrices	
\[
\begin{pmatrix}
		1&0&b_1\\
		0&2&c_1\\
		b_1&c_1&5
	
\end{pmatrix}
,
\begin{pmatrix}
		1&0&b_2\\
		0&1&c_2\\
		b_2&c_2&3
	
\end{pmatrix}
.
\]

	We again determine all possible values for the coefficients and reduce the forms, which leads to the following possibilities (this can be done as an exercise by the reader):
	\begin{align*}
		&\begin{pmatrix}
			1&&\\
			&1&\\
			&&d_1
		\end{pmatrix},&d_1& = 1, 2, 3\\
		&\begin{pmatrix}
			1&&\\
			&2&\\
			&&d_2
		\end{pmatrix},&d_2& = 2, 3, 4, 5\\
		&\begin{pmatrix}
			1&&\\
			&2&1\\
			&1&d_3
		\end{pmatrix},&d_3& = 4, 5.
	\end{align*}
 
	Continuing this process for rank 4, we get 207 forms, 201 of which are universal. This can be proved by local methods and genus theory (i.e., by a~suitable use of the local-global principle). The remaining 6 forms represent all but one integers. After adding one more variable, we get 1630 universal forms of rank~5.

	This procedure showed that if $ Q $ is universal, then it contains one of the rank 4 or 5 forms obtained above. These are all universal, and so the converse implication also holds, i.e., any quadratic form that contains one of these forms is universal.

	But in the process of escalations, we only considered representations of small integers:
\begin{center}
\begin{tabular}
{l|l}
			\hline
			rank&\\
			\hline
			1&1\\
			2&1, 2\\
			3&1, 2, 3, 5\\
			4 \& 5&1, 2, 3, \dots, 15\\
			\hline
\end{tabular}
\end{center}

	Thus if $ Q $ represents the integers 1, 2, 3, \dots, 15, then it is universal, proving the 15-Theorem.

	\subsection*{Proof of 290-Theorem}
	The proof of the 290-Theorem, although similar, is much more complicated. First, there are more cases to be considered (we have to continue the escalations up to rank 7, which leaves us with approximately 20 000 cases). Second, proving universality is sometimes very non-trivial and uses tools such as theta series (modular forms). For more information, see the original papers~\cite{Bha,BH} or the surveys~\cite{Hah, Moo}.

	\section{Quadratic Lattices}\label{sec:3}
	Talking about changes of variables and adding a~variable in each step of the escalation process is unpleasant. A more efficient approach is to work with quadratic lattices. Their theory is extensive, but we will keep it to the necessary minimum and refer the reader to the book by O'Meara~\cite{OM} for more details.

	\subsection*{Abstract lattices}
	Let $ K $ denote a~number field of degree $ d $ over $\Q$, $ \cO_K $ its ring of integers, and $ V $ a~finite dimensional $ K $-vector space. A subset $ L \subset V $ is an \emph{$ \cO_K $-lattice} if it is a~finitely generated $ \cO_K $-module.
	
\begin{example}
		If $ v_1, v_2, \dots, v_r $ is a~basis of $ V $, then $ L = \cO_K v_1+\cdots+ \cO_K v_r $ is the \textit{free} $ \cO_K $-lattice of rank~$ r $.
	
\end{example}

	We can wonder about the converse statement: Is every $ \cO_K $-lattice of this form? The answer is no, as the following theorem shows.
	
\begin{theorem}[{\cite[81:5]
{OM}}]		Let $ L \subset V $ be an $ \cO_K $-lattice. Then there exist linearly independent vectors $ v_1, \dots, v_r $ in $ V $ and a~fractional ideal $ A $ in $ K $ such that
\[
		L = \cO_K v_1+ \cO_K v_2+\dots+ \cO_K v_{r-1}+A v_r.		
\]
		In particular, the lattice $ L $ is not free when $ A $ is not a~principal ideal.
	
\end{theorem}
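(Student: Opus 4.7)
The plan is to deduce this from the Steinitz structure theorem for finitely generated torsion-free modules over the Dedekind domain $\cO_K$, proceeding by induction on the rank $r=\dim_K(KL)$. After replacing $V$ by $KL$ I may assume that $L$ spans $V$ over $K$; note that $L$ is automatically torsion-free because it embeds in a $K$-vector space, and hence a fortiori flat.

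For the base case $r=1$, picking any nonzero $w\in L$ gives $V=Kw$, and the set $A:=\{a\in K : aw\in L\}$ is a nonzero finitely generated $\cO_K$-submodule of $K$, hence a fractional ideal, so $L=Aw$ already has the asserted form.

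For the inductive step I would choose any nonzero $K$-linear functional $\varphi\colon V\to K$ and set $J:=\varphi(L)$, which by the same reasoning as in the base case is a fractional ideal of $K$. The kernel $L':=L\cap\ker\varphi$ is then an $\cO_K$-lattice of rank $r-1$ in the hyperplane $\ker\varphi$, and it fits into a short exact sequence
\[
0\longrightarrow L'\longrightarrow L\longrightarrow J\longrightarrow 0.
\]
Since fractional ideals of $\cO_K$ are invertible and therefore projective, this sequence splits, giving $L\cong L'\oplus J$. By the inductive hypothesis $L'\cong \cO_K^{r-2}\oplus I$ for some fractional ideal $I$, so altogether $L\cong \cO_K^{r-2}\oplus I\oplus J$.

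The one genuinely nontrivial ingredient, and the step I expect to be the main obstacle, is the \emph{Steinitz lemma}: for any fractional ideals $I,J$ of $\cO_K$ one has an $\cO_K$-module isomorphism $I\oplus J \cong \cO_K\oplus IJ$. The usual proof uses weak approximation to replace $I$ and $J$ by coprime representatives in their ideal classes and then invokes the Chinese Remainder Theorem. Granted this, we obtain $L\cong\cO_K^{r-1}\oplus A$ with $A=IJ$, which translates back into the stated presentation $L=\cO_K v_1+\cdots+\cO_K v_{r-1}+Av_r$ once one chooses corresponding elements of $V$. For the final ``in particular'' clause, a separate input is needed: the class of $A$ in the ideal class group of $\cO_K$ (the \emph{Steinitz class} of $L$) is an invariant of the $\cO_K$-module isomorphism type of $L$, and it is trivial precisely when $L$ is free, so $A$ nonprincipal forces $L$ to be nonfree.
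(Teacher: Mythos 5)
The paper does not prove this statement at all --- it is quoted from O'Meara's book (81:5) as a black box --- so there is no in-paper argument to compare against; your proposal must be judged on its own, and it is correct. It is also essentially the standard (and O'Meara's own) route: induct on the rank, split off a fractional-ideal quotient $J=\varphi(L)$ using projectivity of invertible ideals over a Dedekind domain, and collapse the resulting decomposition $\cO_K^{r-2}\oplus I\oplus J$ via the Steinitz lemma $I\oplus J\cong \cO_K\oplus IJ$. All the small verifications you leave implicit do go through: $A=\{a\in K: aw\in L\}$ is finitely generated because it is isomorphic to $L$ itself; $L'=L\cap\ker\varphi$ is finitely generated by Noetherianity and has rank $r-1$ by flatness of $K$ over $\cO_K$; and the abstract isomorphism $L\cong\cO_K^{r-1}\oplus A$ does translate into the stated internal decomposition with linearly independent $v_i$, since the $r$ rank-one summands span $r$ lines whose sum is the $r$-dimensional space $KL$. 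The only point you flag but do not close is the invariance of the Steinitz class, needed for the final ``in particular'' clause; the quickest way to finish it is to take top exterior powers: if $\cO_K^{r-1}\oplus A\cong\cO_K^{r}$ then $\Lambda^r$ of both sides gives $A\cong\cO_K$, forcing $A$ to be principal. With that one line added, the argument is complete.
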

	Of course, when $ K $ has class number $ h_K = 1 $, then all fractional ideals are principal and all $ \cO_K $-lattices are free.

	When $L$ is as in theorem above, then $r$ is its \textit{rank}.

	\subsection*{Quadratic lattices}
	Recall that $ V $ denotes a~finite dimensional $ K $-vector space. We moreover assume that $ V $ is a~\emph{quadratic space}, i.e., we a~have quadratic form $ Q: V \to K $ and the attached bilinear form
\[
	B(v, w) = \frac{1}{2}\left(Q(v+w)-Q(v)-Q(w)\right).	
\]

	If $ L \subset V $ is an $ \cO_K $-lattice, then the pair $ (L, Q) $ is called a~\emph{quadratic $ \cO_K $-lattice} (more precisely, we should take the restriction $ Q|_L $ instead of $ Q $).

	A quadratic $ \cO_K $-lattice $ (L, Q) $ is called \emph{integral} if $ Q(v) \in \cO_K $ for all $ v \in L $. In this case $ B(v, w) \in \frac{1}{2} \cO_K $ for all $ v, w \in L $. If $ B(v, w) \in \cO_K $ for all $ v, w \in L $, we say that the quadratic lattice is \emph{classical}.

	The language of quadratic lattices lets us make some of the arguments of the preceding section on escalations more formal. Let $ (L, Q) $ be a~$ \Z $-lattice. Instead of ``$ Q $ contains $ x^2 $" we can say ``there exists $ v_1 \in L $ such that $ Q(v_1) = 1 $", instead of ``$ Q $ must represent 2" we would say ``there exists $ v_2 \in L $ such that $ Q(v_2) = 2 $. What are now the possibilities for $ B(v_1, v_2) $?" and so on.

	Nevertheless, we will sometimes still just talk about quadratic forms with the understanding that the discussion can be made more precise in the language of lattices.

	Specifically, when we have a~free lattice $\cO_K^r$, then the corresponding quadratic form looks like~\eqref{eq:form}, except that now $a_{ij}\in\cO_K$, and we again have the Gram matrix given by~\eqref{eq:gram}.

	\

	As before, we want to study positive definite quadratic forms, but now over a~number field $ K $.

	First, note that if $ [K:\Q] = d $, then there are $ d $ embeddings $ \sigma: K \hookrightarrow \mathbb C $. We say that $ K $ is \emph{totally real} if $ \sigma(K) \subset \R $ for every $ \sigma $. Concretely, let's take $ K = \Q(\gamma) $ for an algebraic integer $ \gamma $ whose minimal polynomial is
\[
	f(X) = X^d +a_1X^{d-1}+\cdots+a_{d-1}X+a_d,\qquad a_i \in \Z.	
\]
	Let $ \gamma_1,\dots,\gamma_d $ denote the complex roots of $ f(X) $. Each embedding $ \sigma_i: K \hookrightarrow \mathbb C $ is completely determined by the image of $ \gamma $, which is sent to one of its conjugates, so that after relabeling $ \sigma_i(\gamma) = \gamma_i $. We see that $ K $ is totally real if and only if $ \gamma_i \in \R $ for all $ i $.

	From now on let's always assume that the number field $ K $ is totally real.

\medskip

	An element $ \alpha \in K $ is \emph{totally positive} if $ \sigma(\alpha) > 0 $ for all the embeddings $ \sigma $. We write $ \alpha \succ \beta $ for $ \alpha, \beta \in K $ if $ \sigma(\alpha)>\sigma(\beta) $ for all $ \sigma $. In particular $ \alpha \succ 0 $ means that $ \alpha $ is totally positive. The totally positive elements $ \alpha \in \cO_K $ form a~semiring which we denote $ \cO_K^+ $.

	The quadratic lattice $ (L, Q) $ is \emph{totally positive definite} if $ Q(v) \succ 0 $ for all $ v \in L\setminus\{0\} $.
	
\begin{definition}
		Let $ (L, Q) $ be an integral totally positive definite lattice. We say that $ Q $ is \emph{universal} if it represents all totally positive integers, i.e., if $ \forall \alpha \in \cO_K^+ \; \exists v \in L: Q(v) = \alpha $.
	
\end{definition}

	\subsection*{Sums of lattices}
	If $ V $ is a~finite dimensional $ K $-vector space and $ L_1, L_2 \subset V $ are two $ \cO_K $-lattices, we define their \emph{sum} as
\[
	L_1+L_2 = \{v+w\mid v \in L_1, w \in L_2\}.	
\]
	It is a~\emph{direct sum} if $ L_1 \cap L_2 = 0 $, in which case we write $ L_1 \oplus L_2 = L_1+L_2 $.

	Assuming that $ V $ is a~quadratic space with a~quadratic form $ Q $, the sum of $ L_1 $ and $ L_2 $ is \emph{orthogonal}, denoted $ L_1 \perp L_2 = L_1+L_2 $, if $ B(v, w) = 0 $ for any $ v \in L_1 $ and $ w \in L_2 $.

	We further use the following notation for \emph{diagonal} quadratic lattices: $ \langle a\rangle $ is the rank one lattice $ \cO_K v $ with $ v\in V $ such that $ Q(v) = a$. Then we define
\[
	\langle a_1, a_2, \dots, a_r \rangle = \langle a_1 \rangle \perp \cdots \perp \langle a_r \rangle = \cO_K v_1 \perp \cO_K v_2 \perp \cdots\perp \cO_K v_r,	
\]
	where $ v_i \in V $ satisfies $ Q(v_i) = a_i $ (and $ B(v_i, v_j) = 0 $ for $ i \neq j $).

	We next state a~useful proposition on the ``splitting off units" that is proved quite easily using Gram--Schmidt orthogonalization.

\begin{proposition}\label{pr:split}
		\label{propSplit}
		Let $ (L, Q) $ be a~\emph{classical} $ \cO_K $-lattice, and let $ v \in L $ be such that $ Q(v) = \varepsilon $ is a~unit in $ \cO_K$. Then $ L = \langle \varepsilon \rangle \perp L' $ for some lattice $ L' \subset L $.
	
\end{proposition}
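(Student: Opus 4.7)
The plan is straightforward Gram--Schmidt orthogonalization, with the key observation that the hypotheses (classical lattice, unit norm) are exactly what is needed to keep the projection inside $\cO_K$.

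First I set $L_1 = \cO_K v \subset L$. Since $Q(v) = \varepsilon \neq 0$ and $\varepsilon \in \cO_K$, the restriction of $Q$ to $L_1$ gives exactly the rank one diagonal lattice $\langle \varepsilon \rangle$. Next, define the candidate orthogonal complement
\[
    L' = \{w \in L \mid B(w, v) = 0\}.
\]
This is evidently an $\cO_K$-submodule of $L$, and by construction $B(w', v) = 0$ for all $w' \in L'$, so $L_1 \perp L'$ in the sense of orthogonal sum.

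The heart of the argument is to show $L = L_1 + L'$ and that the sum is direct. Given any $w \in L$, I want to write $w = \alpha v + w'$ with $\alpha \in \cO_K$ and $w' \in L'$. The Gram--Schmidt choice is $\alpha = \varepsilon^{-1} B(w,v)$, which forces $w' = w - \alpha v$. I need to check two things. First, $\alpha \in \cO_K$: because $L$ is classical, $B(w,v) \in \cO_K$, and because $\varepsilon$ is a unit, $\varepsilon^{-1} \in \cO_K$, so indeed $\alpha \in \cO_K$ and $\alpha v \in L_1 \subset L$; hence $w' = w - \alpha v \in L$. Second, $w' \in L'$: a direct computation gives
\[
    B(w', v) = B(w, v) - \alpha\, Q(v) = B(w,v) - \varepsilon^{-1} B(w,v)\cdot \varepsilon = 0.
\]
Finally, for directness, suppose $\alpha v \in L_1 \cap L'$; then $0 = B(\alpha v, v) = \alpha \varepsilon$, and since $\varepsilon$ is a unit we conclude $\alpha = 0$. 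Therefore $L = L_1 \oplus L' = \langle \varepsilon \rangle \perp L'$, as required.

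There is no real obstacle here; the only thing to watch is that both hypotheses are used in precisely one spot, namely to ensure $\alpha = \varepsilon^{-1}B(w,v)$ lies in $\cO_K$ rather than only in $K$. Without classicality we would only have $B(w,v) \in \tfrac12 \cO_K$, and without $\varepsilon$ being a unit we could not invert it; either failure would let $\alpha$ escape $\cO_K$ and the projection $w - \alpha v$ would fall out of the lattice $L$.
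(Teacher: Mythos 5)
Your proof is correct and follows essentially the same route as the paper's: the Gram--Schmidt projection $w \mapsto w - \varepsilon^{-1}B(w,v)\,v$ onto the orthogonal complement $L' = \{w \in L \mid B(w,v)=0\}$, with classicality and the unit hypothesis used exactly to keep the coefficient in $\cO_K$. Your additional check that the sum is direct is a small bonus the paper leaves implicit.
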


\begin{proof}
		Let $ L' = (\cO_K v)^{\perp} = \{w \in L \mid B(w, v) = 0\} $. Clearly $ \cO_K v = \langle \varepsilon \rangle $ is orthogonal to $ L' $ and we must show that $ L = \cO_K v + L' $.

		Take any $ z \in L $ and set $ w := z-B(z, v)\varepsilon^{-1}v $. The vector $ B(z, v)\varepsilon^{-1}v $ belongs to $ \cO_K v$, because $ \varepsilon $ is invertible and $B(z, v)\in\cO_K$ as $L$ is classical. And we have $ w \in L' $ because
\[
		B(w, v) = B(z-B(z, v)\varepsilon^{-1}v, v) = B(z, v)-B(z, v)\varepsilon^{-1}B(v, v) = 0,		
\]
		where we used $ B(v, v) = Q(v) = \varepsilon $.
		Thus $z=B(z, v)\varepsilon^{-1}v+w\in \cO_K v + L' $.
\end{proof}

Before turning to the recent developments on universal forms that constitute our main topic, let's briefly comment on three closely related fields of interest:

Indefinite quadratic forms (and forms over number fields that are not totally real) behave quite differently from our case.
 Let's only briefly remark that, for example,~\cite{Si2} and~\cite{EH} characterized general number fields with universal sums of five and three squares, and then~\cite{HHX, HSX, XZ} considered more general universal forms.
Another interesting topic is the study of universal Hermitian quadratic forms over imaginary quadratic fields, e.g.,~\cite{EK2,KP2}.

Regular quadratic forms are forms that represent all elements that are not ruled out by local obstructions~\cite{CI,Ea}, and thus present a~natural generalization of universal forms.
Their theory in many aspects parallels the theory of universal forms; in fact, tools such as Watson's transformations~\cite{CE+,CEO,Wa} sometimes allow one to convert regular forms into universal ones (although special care must be paid to what happens dyadically, as well as over number fields with non-trivial narrow ideal class group).

In connection to this, let's also briefly mention the recent computational results by Kirschmer and Lorch~\cite{Kir,LK} that classify 1-class genera of quadratic lattices over number fields.

Further, let's note that besides from studying representations of integers by quadratic forms, there have been numerous works considering representations of quadratic forms by quadratic forms and, in particular, by the sum of squares, e.g.,~\cite{BI, BC+, Ic, JKO, KO1, KO2, KO3, Ko, KrY, Mo1, Mo2, Oh, Sa1}. Most of them deal with forms over $\Z$, but it is another exciting direction of future research to consider the situation over number fields in detail.

	\section{Real Quadratic Fields}\label{sec:4}
	In this section, let's consider universal forms over a~real quadratic field $ K = \Q(\sqrt{D}) $ with squarefree $ D > 1 $. For simplicity, we always assume $ D \equiv 2, 3 \pmod{4} $ so that $ \cO_K = \Z[\sqrt{D}] $ (but everything that we discuss here also generalizes to the case $D\equiv 1\pmod 4$).

	There are two embeddings $ K \hookrightarrow \mathbb R $, the identity and	
\[
	\alpha = a+b\sqrt{D} \mapsto \alpha' = a-b\sqrt{D}.
\]
	Thus $ \alpha $ is totally positive if and only if $ a+b\sqrt{D}>0 $ and $ a-b\sqrt{D}>0 $. The norm of $ \alpha $ is $ N(\alpha) = \alpha \alpha' = a^2 -b^2 D $, and its trace is $\Tr(\alpha)=\alpha+\alpha'=2a$.

	\subsection*{Diagonal forms and indecomposables}
	An easy example of a~quadratic lattice is $ (\cO_K^r, Q) $, where
\[
	Q(x_1, \dots, x_r) = a_1 x_1^2 +\cdots+a_r x_r^2	
\]
	is a~\textit{diagonal} form. The lattice is integral and totally positive if and only if all the coefficients $ a_i\in \cO_K^+ $.

\medskip

	A key tool for working with (diagonal) universal forms is the notion of an indecomposable element:

	We say that $ \alpha \in \cO_K^+ $ is \emph{indecomposable} if $ \alpha \neq \beta+\gamma$ for $ \beta, \gamma \in \cO_K^+ $.

	To explain the connection, let's assume that a~diagonal quadratic form $ Q $ is universal. Then we can express any indecomposable $ \alpha $ as
\[
	\alpha = a_1 v_1^2 +\cdots+a_r v_r^2,	
\]
	and hence $ \alpha = a_iv_i^2 $ for some $ i $ thanks to indecomposability. Thus each indecomposable essentially appears as a~coefficient in $ Q $, and we can conclude that the number of variables $ r $ of a~universal quadratic form is bounded from below by the number of square classes of indecomposables.

	The key question that remains to be answered is: Are there any indecomposables? Luckily, yes:

\begin{lemma}
		If $ \varepsilon $ is a~totally positive unit, then it is indecomposable.
	
\end{lemma}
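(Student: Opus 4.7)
The plan is to argue by contradiction, comparing norms. Suppose $\varepsilon=\beta+\gamma$ with $\beta,\gamma\in\cO_K^+$, and I want to derive $0<N(\beta)<1$, which is impossible for a nonzero algebraic integer.

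First I would record that a totally positive unit has norm $+1$: since $\varepsilon\in\cO_K^\times$, $N(\varepsilon)=\prod_\sigma\sigma(\varepsilon)=\pm 1$, and since each factor $\sigma(\varepsilon)$ is a positive real number, the product must actually equal $+1$. Next, for each embedding $\sigma:K\hookrightarrow\R$, both $\sigma(\beta)$ and $\sigma(\gamma)$ are strictly positive, and $\sigma(\beta)+\sigma(\gamma)=\sigma(\varepsilon)$. Hence
\[
0<\sigma(\beta)<\sigma(\varepsilon)\qquad\text{for every }\sigma.
\]
Taking the product of these inequalities over all $d$ embeddings yields
\[
0<N(\beta)=\prod_{\sigma}\sigma(\beta)<\prod_{\sigma}\sigma(\varepsilon)=N(\varepsilon)=1.
\]
But $\beta\in\cO_K$ is a nonzero algebraic integer (it is totally positive, so in particular nonzero), and therefore $N(\beta)$ is a nonzero rational integer, forcing $|N(\beta)|\geq 1$. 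This contradicts $0<N(\beta)<1$.

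The entire argument is a two-line norm estimate, so there is no real obstacle; the only ``trick'' is noticing that the total positivity of $\varepsilon$ upgrades $N(\varepsilon)=\pm1$ to $N(\varepsilon)=1$, which is what makes the strict inequality $N(\beta)<1$ bite. The same reasoning will in fact show that any element of $\cO_K^+$ with norm $1$ is indecomposable, though this stronger statement is not needed here.
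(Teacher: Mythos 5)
Your proof is correct. It differs slightly in mechanics from the paper's: the paper expands $N(\varepsilon)=(\beta+\gamma)(\beta'+\gamma')=N(\beta)+N(\gamma)+\beta\gamma'+\beta'\gamma$, discards the (totally positive, hence positive-trace) cross terms, and reaches the contradiction $1>N(\beta)+N(\gamma)\geq 2$; you instead bound each embedding separately, $0<\sigma(\beta)<\sigma(\varepsilon)$, multiply, and contradict the integrality of $N(\beta)$ via $0<N(\beta)<1$. Both are two-line norm estimates and both prove the lemma, and yours has the advantage of working verbatim in any degree $d$ with no extra effort. The trade-off is that the paper's expansion is the one that generalizes to the \emph{stronger} statement invoked immediately afterwards --- that every element of norm $<2^d$ is indecomposable (norm $<4$ in the quadratic case) --- since there the lower bound $N(\beta)+N(\gamma)\geq 2$ (or, in higher degree, an AM--GM estimate on the $2^d$ expansion terms) is what does the work; your factor-by-factor inequality only yields $N(\beta)<N(\alpha)$ and so, as you correctly note, only handles elements of norm $1$.
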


\begin{proof}
		Suppose for contradiction that $ \varepsilon = \beta+\gamma $ for some $ \beta, \gamma \in \cO_K^+ $. Then
\[
		1 = N(\varepsilon) = (\beta+\gamma)(\beta'+\gamma') = N(\beta)+N(\gamma)+\beta\gamma'+\beta'\gamma > N(\beta)+N(\gamma) \geq 2.\qedhere	
\]
\end{proof}

Essentially the same proof also works in a~totally real field of a~general degree $d$ and shows that each element of norm $<2^d$ is indecomposable.

Brunotte~\cite{Br1, Br2} gave a~general upper bound on the norm of an indecomposable integer, and so in each $ K $, there are finitely many indecomposables up to multiplication by totally positive units. Unfortunately, the bound is exponential in the regulator of the number field, and so it is not very useful. While this can be significantly improved (see Theorem~\ref{th 6.1} below), it is still important to obtain more information about indecomposables, ideally in the form of an explicit construction. In real quadratic fields, this is possible using continued fractions, as we shall see next.

	\subsection*{Continued fractions}
	The fundamental unit of a~real quadratic field can be given in terms of the continued fraction of $ \sqrt{D} $. It is periodic
\[
	\sqrt{D} = [u_0, \overline{u_1, \dots, u_s}] = [u_0, {u_1, \dots, u_s}, u_1, \dots, u_s, u_1, \dots] = u_0+\frac{1}{u_1+\frac{1}{u_2+\cdots}},
\]
	and we know that $ u_0 = \lfloor \sqrt{D} \rfloor $ and $ u_s = 2\lfloor \sqrt{D} \rfloor $.

	Let
\[
	\frac{p_i}{q_i} = [u_0, \dots, u_i]
\]
	be the \emph{convergents} of the continued fraction. These give the good approximations to $ \sqrt{D} $ since	
\[
	\left|\frac{p_i}{q_i}-\sqrt{D}\right| < \frac{1}{u_{i+1}q_i^2}.
\]

	By an abuse of terminology, the quadratic integers $ \alpha_i = p_i+q_i\sqrt{D} $ will also be called \emph{convergents}. The element $ \alpha_{s-1} $ is the fundamental unit. In other words, it generates the group of units, which can be described as	
\[
	 \cO_K^\times = \left\{\pm \alpha_{s-1}^k \mid k\in\Z\right\}.
\]

	When are the convergents $ \alpha_i $ totally positive? We always have $ \alpha_i > 0 $, and it turns out that $ \alpha_i' > 0 $ if and only if $ i $ is odd.

	Consequently, the fundamental unit $ \alpha_{s-1} $ is totally positive if and only if $ s $ is even. {Thus for $ s $ even, the Pell equation $ x^2 -Dy^2 =-1 $ has no solutions. For $ s $ odd, it has a~solution, namely $ \alpha_{s-1} = x+y\sqrt{D} $.} When $ s $ is odd, the smallest totally positive unit (greater than 1) is then $ \alpha_{2s-1} = \alpha_{s-1}^2 $.

\medskip

	The convergents $ \alpha_i $ satisfy the recurrence
\[
	\alpha_{i+1} = u_{i+1}\alpha_i+\alpha_{i-1},\qquad i \geq 0,
\]
	with $ \alpha_{-1} = 1 $, $ \alpha_0 = \lfloor \sqrt{D} \rfloor + \sqrt{D} $. We observe that multiplication by $ \alpha_{s-1} $ shifts indices: $ \alpha_i \alpha_{s-1} = \alpha_{i+s} $.

	The convergents of a~continued fraction are characterized by their best approximation property. One could say that being indecomposable is a~form of ``totally positive best approximation property", and so the following classical theorem~\cite{DS} should not be too surprising.

\begin{theorem}[{\cite[Theorems 2 and 3]
{DS}}]		The indecomposables $ \alpha $ are precisely the \emph{semiconvergents}, i.e., elements of the form 
\[
		\alpha = \alpha_{i, t} = \alpha_i+t\alpha_{i+1},\qquad i \geq -1\text{ odd},\ 0 \leq t < u_{i+2},	
\]
		and their conjugates. Moreover, $ N(\alpha) \leq D $ for every indecomposable $\alpha$.
	
\end{theorem}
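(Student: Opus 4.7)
The plan is to use the geometric picture in which each $\alpha\in\cO_K$ is viewed as the lattice point $(\alpha,\alpha')\in\R^2$; under this identification $\cO_K^+$ becomes the set of lattice points in the open first quadrant, and $\alpha$ is indecomposable precisely when no $\beta\in\cO_K^+$ satisfies $\beta\prec\alpha$. Indeed, given such a $\beta$, the difference $\gamma:=\alpha-\beta$ lies in $\cO_K^+$ and gives a decomposition; conversely, any summand of $\alpha$ is strictly smaller in both embeddings. Thus the indecomposables are the ``Pareto--minimal'' lattice points of $\cO_K^+$, i.e., the vertices of the staircase along the lower boundary of the convex hull of $\cO_K^+$, and the problem reduces to identifying this staircase with the semiconvergents and their conjugates.

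For one direction I would first verify that $\alpha_{i,t}\succ 0$ for odd $i\ge -1$ and $0\le t<u_{i+2}$: since the sign of $\alpha_j'$ alternates with $j$, we have $\alpha_i'>0$ and $\alpha_{i+1}'<0$, so $t\mapsto\alpha_{i,t}'$ is linearly decreasing from $\alpha_i'>0$ at $t=0$ to $\alpha_{i+2}'>0$ at $t=u_{i+2}$, hence stays positive on the whole interval. To show that $\alpha_{i,t}$ is then indecomposable, I would assume the existence of $\beta=p+q\sqrt D\in\cO_K^+$ with $\beta\prec\alpha_{i,t}$ and derive a contradiction using the best-approximation property $|p_j-q_j\sqrt D|<1/q_{j+1}$ of convergents: the strict componentwise inequalities translate into $q<q_i+tq_{i+1}$ combined with a sharp upper bound on $|p-q\sqrt D|$ that no nonzero lattice point satisfies.

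Conversely, given an indecomposable $\alpha=p+q\sqrt D$, after possibly replacing it by its conjugate I may assume $q\ge 0$; total positivity of $\alpha'$ then forces $p\ge q\sqrt D$. Standard continued-fraction theory says that any rational $p/q>\sqrt D$ lies between two consecutive odd-indexed convergents $p_{i+2}/q_{i+2}<p/q\le p_i/q_i$, and the lattice points strictly inside the corresponding ``strip'' of the staircase are precisely $(p_i+tp_{i+1},q_i+tq_{i+1})$ for $0\le t\le u_{i+2}$; indecomposability then excludes the endpoint $t=u_{i+2}$ (which equals $\alpha_{i+2}$, sitting on the next step) and any stray points, giving $\alpha=\alpha_{i,t}$ with $0\le t<u_{i+2}$. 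The main obstacle I anticipate is precisely this classification of lattice points between two steps---the ``totally positive'' refinement of the classical one-sided best-approximation theorem. Finally, the norm bound $N(\alpha)\le D$ comes from expanding $N(\alpha_{i,t})=N(\alpha_i)+t^2N(\alpha_{i+1})+t\,\Tr(\alpha_i\alpha_{i+1}')$, bounding $|N(\alpha_j)|<2\sqrt D$ via best approximation, applying the determinantal identity $p_iq_{i+1}-p_{i+1}q_i=(-1)^{i+1}$, and estimating each piece using $t<u_{i+2}$ together with the recurrence $\alpha_{i+2}=u_{i+2}\alpha_{i+1}+\alpha_i$.
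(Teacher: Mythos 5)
The survey itself gives no proof of this statement---it is quoted verbatim from Dress--Scharlau \cite{DS}---so there is no in-paper argument to compare yours against. On its own terms, your outline follows the standard geometric route, and its first two ingredients are sound: the equivalence ``$\alpha$ indecomposable $\iff$ no $\beta\in\cO_K^+$ with $\beta\prec\alpha$'' is correct, and so is the total positivity of $\alpha_{i,t}$ via the sign alternation of $\alpha_j'$ and linearity in $t$.

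The genuine gap is the one you flag yourself: the classification of the $\prec$-minimal lattice points in each strip $p_{i+2}/q_{i+2}<p/q\le p_i/q_i$ \emph{is} the theorem, and your proposal asserts it rather than proves it, in both directions. For the forward direction you never exhibit the ``sharp upper bound on $|p-q\sqrt D|$ that no nonzero lattice point satisfies''; note also that the ratio $p/q$ does not determine the point ($2\alpha_{i,t}$ lies in the same strip), so minimality must be used explicitly, and your description of the indecomposables as the ``vertices of the staircase'' is literally wrong---the convex-hull vertices are only the convergents $\alpha_i$ (odd $i$) and their conjugates, while the semiconvergents with $0<t<u_{i+2}$ are non-vertex lattice points on the edges, so a literal reading of your reduction would miss most of them. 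The norm bound as sketched is also short of the target: the natural estimates $|\alpha_{i,t}'|\le|\alpha_i'|<1/q_{i+1}$ and $\alpha_{i,t}<\alpha_{i+2}$ give only something of size $4D$. A clean way to close the forward direction and the norm bound simultaneously is the trace-functional device the survey uses later (Step 1 of the proof of Theorem~\ref{theoremKT}): $\delta=-\frac{1}{2\sqrt D}\alpha_{i+1}'$ lies in $\cO_K^{\vee,+}$ and satisfies $\Tr(\delta\alpha_{i,t})=1$, which forces indecomposability at once, and AM--GM applied to $\delta\alpha_{i,t}+\delta'\alpha_{i,t}'=1$ yields $N(\delta)N(\alpha_{i,t})\le\frac14$, i.e.\ $N(\alpha_{i,t})\le D/|N(\alpha_{i+1})|\le D$. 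The reverse inclusion (every indecomposable is a semiconvergent or a conjugate of one) still requires the full lattice-point classification---e.g.\ the supporting-line argument showing that any totally positive integer not on the boundary polygon dominates a boundary lattice point---which must be carried out, not anticipated.
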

	Note that $ \alpha_{i, u_{i+2}} = \alpha_i+u_{i+2}\alpha_{i+1} = \alpha_{i+2} $, so indecomposables with a~fixed $ i $ form an arithmetic progression going from $ \alpha_i $ to $ \alpha_{i+2} $.

	There have been several improvements on the upper bound for the norm of indecomposables~\cite{JK, Ka2, TV}, and on the additive structure of quadratic fields~\cite{HK,KZ}.

\begin{example}
		Let's find all indecomposables for $ D = 6 $. The convergents of the continued fraction expansion $ \sqrt{6} = [2, \overline{2, 4}] $ are
\begin{align*}
			\alpha_{-1} & = 1\\
			\alpha_0 &  = 2+\sqrt{6}, &\frac{p_0}{q_0} & = 2\\
			\alpha_1 & = 5+2\sqrt{6}, &\frac{p_1}{q_1} & = 2+\frac{1}{2} = \frac{5}{2}.		
\end{align*}

		The element $ \alpha_1 $ is a~totally positive unit. Thus all indecomposables up to multiplication by $ \alpha_1 $ are $ \alpha_{-1, t} $ for $ 0 \leq t < u_1 = 2 $. We have

\[
		\alpha_{-1, 0} = 1,\qquad \alpha_{-1, 1} = 1+(2+\sqrt{6}) = 3+\sqrt{6}
\]
		with $ N(3+\sqrt{6}) = 3 $, and
\[
		\alpha_{1, 0} = \alpha_1 = 5+2\sqrt{6},\qquad \alpha_{1, 1} = \alpha_1\cdot \alpha_{-1, 1} = 27+11\sqrt{6}.
\]

		The semiconvergents $ \alpha_{1, t} = \alpha_1\cdot \alpha_{-1, t} $, $ \alpha_{3, t} = \alpha_1^2 \cdot \alpha_{-1, t} $ etc. differ from $ \alpha_{-1, t} $ by a~multiple of a~unit but in the case of $ \alpha_{1, t} $ not by a~multiple of a~\textit{square} of a~unit.

		In conclusion, there are four square classes of indecomposables represented by $ 1 $, $ 3+\sqrt{6} $, $ 5+2\sqrt{6} $, and $ 27+11\sqrt{6} $.

\medskip

		Now we apply our findings to the problem of universality. Let $ Q $ be a~diagonal universal quadratic form. Since it must represent $ 1 $ and $ 3+\sqrt{6} $, it contains
\begin{equation}\label{eqQuadForm1}
			1\cdot x^2 +(3+\sqrt{6})\cdot y^2.
\end{equation}
		The totally positive unit $ 5+2\sqrt{6} $ is represented by $ Q $, but it is not a~square and $ N(5+2\sqrt{6}) = 1 $, so it is not represented by~(\ref{eqQuadForm1}). Hence $ Q $ must contain
\begin{equation}\label{eqQuadForm2}
			1\cdot x^2 +(3+\sqrt{6})\cdot y^2 +(5+2\sqrt{6})\cdot z^2.
\end{equation}
		The indecomposable $ 27+11\sqrt{6} $ has norm 3 but its square class is not represented by $ 3+\sqrt{6} $, and therefore $ Q $ contains
\begin{equation}\label{eqQuadForm3}
			1\cdot x^2 +(3+\sqrt{6})\cdot y^2 +(5+2\sqrt{6})\cdot z^2 +(27+11\sqrt{6})\cdot w^2.
\end{equation}
		This shows that each diagonal universal quadratic form must have rank $ r \geq 4 $.
		(Of course, the preceding argument could be more formally stated in the language of quadratic lattices.)
	
\end{example}

	\subsection*{Construction of a~universal form}
\begin{lemma}\label{observationSumOfIndec}
	Every $ \alpha \in \cO_K^+ $ is a~sum of indecomposables.
\end{lemma}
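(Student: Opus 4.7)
The plan is to induct on the trace $\Tr(\alpha)=\alpha+\alpha'$. For any $\alpha\in\cO_K^+$ this is a rational integer (since $\alpha$ is an algebraic integer) and strictly positive (since $\alpha,\alpha'>0$), so it is a positive integer; in particular $\Tr(\alpha)\geq 1$, and strong induction on this quantity is legitimate.

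If $\alpha$ is itself indecomposable, there is nothing to prove. Otherwise, by the very definition of decomposability, we may write $\alpha=\beta+\gamma$ with $\beta,\gamma\in\cO_K^+$. Taking traces gives $\Tr(\alpha)=\Tr(\beta)+\Tr(\gamma)$, and since $\Tr(\beta),\Tr(\gamma)\geq 1$, both summands have trace strictly smaller than $\Tr(\alpha)$. The induction hypothesis then writes each of $\beta$ and $\gamma$ as a sum of indecomposables, and concatenating these two expressions exhibits $\alpha=\beta+\gamma$ as a sum of indecomposables. The base case is automatic: the smallest possible trace values cannot accommodate a decomposition $\Tr(\beta)+\Tr(\gamma)=\Tr(\alpha)$ with both summands positive, so such $\alpha$ are forced to be indecomposable.

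The only conceptual step is the choice of a well-founded ``size'' on $\cO_K^+$; once one observes that $\Tr\colon\cO_K^+\to\Z_{>0}$ is additive and strictly positive, the induction essentially writes itself, so I do not anticipate a genuine obstacle. (One could equivalently induct on $\Tr(\alpha)$ replaced by $\sum_\sigma \sigma(\alpha)$ over all real embeddings, which makes the same argument go through verbatim in any totally real field of arbitrary degree.)
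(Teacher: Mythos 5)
Your proof is correct and is essentially the same as the paper's: both induct on the trace, using that $\Tr$ is a strictly positive integer on $\cO_K^+$ and additive, so a decomposition $\alpha=\beta+\gamma$ strictly decreases the trace of each summand. The remark that the same argument works verbatim in any totally real field is also how the paper uses this lemma later.
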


\begin{proof}
		If $ \alpha $ is not indecomposable, then $ \alpha = \beta+\gamma $. But then $ \Tr(\alpha) = \Tr(\beta)+\Tr(\gamma) $ and the traces are positive integers (because the elements are totally positive and integral). Therefore $ \Tr(\beta), \Tr(\gamma) < \Tr(\alpha) $. The result follows by induction.
\end{proof}

	Let $ \varepsilon $ be the totally positive fundamental unit; in other words, a~generator of the group of all totally positive units
\[
	 \cO_K^{\times,+} = \{\varepsilon^\ell\mid\ell\in \Z\}.
\]
	 We have seen that $ \varepsilon $ equals $\alpha_{s-1} $ or $ \alpha_{2s-1} $ depending on whether $ s $ is even or odd, respectively.

	 Further, let $ S $ be the set of representatives of indecomposables up to multiplication by $ \cO_K^{\times,+} $. We can take
\[
	S = \{\alpha_{i, t_i}\mid i = -1, 1, 3, \dots, k, \; 0 \leq t_i < u_{i+2}\},
\]
	where $ k = s-3 $ if $ s $ is even and $ k = 2s-3 $ if $ s $ is odd. In particular, the number of elements in $ S $ is
\begin{align*}
	\#S &= u_1+u_3+u_5+\cdots \\
        &=
        \begin{cases}
		u_1+u_3+\cdots+u_{s-3}+u_{s-1},&s\text{ even},\\
		u_1+u_3+\cdots+u_{2s-3}+u_{2s-1}=u_1+u_2+u_3+\cdots+u_{s-1}+u_{s},&s\text{ odd}.
\end{cases}
\end{align*}

	Lemma~\ref{observationSumOfIndec} tells us that any totally positive $ \alpha $ is a~sum of indecomposables. We group the indecomposables according to their class in $ S $ to express $ \alpha $ as
\begin{equation}\label{eqAlpha}
		\alpha = \sum_{\sigma \in S}\sigma e_\sigma,
\end{equation}
	where each $ e_\sigma $ is a~linear combination of totally positive units with non-negative coefficients.

\begin{lemma}
		For each $ e_\sigma $, there exist $ j\in \Z $ and integers $ c, d \geq 0 $ such that
\[
		e_\sigma = c\varepsilon^j +d\varepsilon^{j+1}.
\]
\end{lemma}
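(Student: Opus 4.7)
The plan is to reformulate the statement as a geometric fact about the lattice $\Z[\varepsilon]$ inside $K$, and then locate $e_\sigma$ in the correct cone. Since $\varepsilon$ is a~totally positive unit, $N(\varepsilon)=1$, so $\varepsilon^{-1}=\varepsilon'=T-\varepsilon$ where $T:=\Tr(\varepsilon)\in\Z_{>0}$. In particular $\varepsilon^{-1}\in\Z+\Z\varepsilon$, and together with $\varepsilon^2=T\varepsilon-1$ this yields $\Z[\varepsilon,\varepsilon^{-1}]=\Z[\varepsilon]=\Z+\Z\varepsilon$; more generally $\{\varepsilon^j,\varepsilon^{j+1}\}$ is a~$\Z$-basis of $\Z[\varepsilon]$ for every $j\in\Z$, because $\varepsilon^j$ is a~unit in this ring. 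Writing $e_\sigma=\sum_k n_k\varepsilon^k$ with $n_k\in\Z_{\geq 0}$ (only finitely many nonzero), we see $e_\sigma\in\Z[\varepsilon]$, and the task reduces to finding $j$ for which the unique expansion $\varepsilon^{-j}e_\sigma=c+d\varepsilon$ has both $c,d\geq 0$.

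For the geometric step I would embed $K\hookrightarrow\R^2$ via $\alpha\mapsto(\alpha,\alpha')$. The vectors $v_j:=(\varepsilon^j,\varepsilon^{-j})$ all lie in the open positive quadrant, and because $\varepsilon>1>\varepsilon^{-1}>0$, the slopes $\varepsilon^{-2j}$ are strictly decreasing in $j$ with limits $0$ and $\infty$ at the two extremes. Consecutive $v_j,v_{j+1}$ are linearly independent, so the closed cones $C_j:=\R_{\geq 0}v_j+\R_{\geq 0}v_{j+1}$ tile the open positive quadrant.

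If $e_\sigma=0$ any choice with $c=d=0$ works. Otherwise $e_\sigma$ is totally positive, so $(e_\sigma,e_\sigma')$ lies in some $C_j$, giving reals $c,d\geq 0$ with $e_\sigma=c\varepsilon^j+d\varepsilon^{j+1}$; the conjugate equation holds automatically since conjugation is a~ring homomorphism. Because $\{\varepsilon^j,\varepsilon^{j+1}\}$ is a~$\Z$-basis of $\Z[\varepsilon]\ni e_\sigma$, the uniquely determined reals $c,d$ must in fact be integers, hence $c,d\in\Z_{\geq 0}$.

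The only points requiring verification are the cone-tiling assertion and the fact that $\{\varepsilon^j,\varepsilon^{j+1}\}$ remains a~$\Z$-basis of $\Z[\varepsilon]$ for every $j$; both follow immediately from $N(\varepsilon)=1$ and $\varepsilon>1$. I do not anticipate any serious obstacle: the lemma is essentially the statement that any totally positive element of $\Z[\varepsilon]$ falls into the cone spanned by two consecutive powers of $\varepsilon$, with the lattice structure supplying integrality of the coefficients for free.
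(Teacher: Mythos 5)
Your proof is correct, but it takes a genuinely different route from the one sketched in the paper. The paper reduces the sum $e_\sigma=\sum_k n_k\varepsilon^k$ symbolically, using the minimal polynomial $\varepsilon^2-n\varepsilon+1=0$ in the form $\varepsilon^{k-1}+\varepsilon^{k+1}=n\varepsilon^k$ to fold the outer terms inward until only two consecutive powers remain; the delicate point there is precisely the bookkeeping needed to keep the coefficients non-negative throughout the reduction. You instead work in the Minkowski embedding: the rays through $(\varepsilon^j,\varepsilon^{-j})$ have strictly decreasing slopes $\varepsilon^{-2j}$ sweeping out the open positive quadrant, so the closed cones spanned by consecutive powers tile it, which places the totally positive element $e_\sigma$ in one such cone and yields real coefficients $c,d\geq 0$ for free; integrality then follows because $\{\varepsilon^j,\varepsilon^{j+1}\}=\varepsilon^j\{1,\varepsilon\}$ is a $\Z$-basis of $\Z[\varepsilon]$ (here you correctly use $N(\varepsilon)=1$, forced by total positivity, to get $\varepsilon^{-1}=\Tr(\varepsilon)-\varepsilon\in\Z[\varepsilon]$), and the two real coefficients must agree with the unique integral ones by linear independence of consecutive basis vectors. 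What your argument buys is a clean separation of non-negativity (a cone containment) from integrality (a lattice basis), avoiding the fiddly induction entirely; it is also the same Shintani-cone picture the paper itself invokes later for cubic fields, so it generalizes more transparently. The algebraic reduction, on the other hand, is completely elementary and produces the pair $(c,d)$ explicitly from the $n_k$. The only cosmetic remark is that the phrase about the conjugate equation holding ``automatically'' is slightly misplaced: the cone containment already gives you the two-coordinate vector equation, and the ring-homomorphism observation is what you need for the \emph{integral} expansion to satisfy the same vector equation before comparing coefficients.
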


	The idea of the proof is to rewrite the linear combination $ e_\sigma $ using the minimal polynomial $ \varepsilon^2 -n \varepsilon+1 = 0 $, one just needs to arrange things so that $c,d$ are indeed non-negative.

\begin{theorem}[{\cite[Theorem 1]
{Ki2},~\cite[Theorem 10]{BK2}}]\label{th:construct}
		The quadratic form
\[
		{\displaystyle\bigperp_{\sigma \in S}}\langle \sigma, \sigma, \sigma, \sigma, \varepsilon\sigma, \varepsilon\sigma, \varepsilon\sigma, \varepsilon\sigma \rangle
\]
		is universal and has $ 8\cdot \#S $ variables. (Here $\textstyle\bigperp$ denotes an orthogonal sum of the diagonal lattices.)
	
\end{theorem}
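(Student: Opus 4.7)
My plan is to represent an arbitrary $\alpha\in\cO_K^+$ by successively reducing the representation problem to Lagrange's four--square theorem over $\Z$. First I would apply Lemma~\ref{observationSumOfIndec} to write $\alpha$ as a sum of indecomposables, and then group the summands by their class in~$S$; since every indecomposable has the shape $\sigma\cdot u$ for a unique $\sigma\in S$ and some $u\in\cO_K^{\times,+}$, this produces
\[
\alpha = \sum_{\sigma\in S} \sigma\, e_\sigma,
\]
where each $e_\sigma$ is a non-negative $\Z$-linear combination of powers of the totally positive fundamental unit $\varepsilon$. The preceding lemma then compresses this combination into two consecutive powers: $e_\sigma = c\varepsilon^{j} + d\varepsilon^{j+1}$ with $c,d\in\Z_{\geq 0}$ and some $j\in\Z$ depending on~$\sigma$.

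The next step identifies what each eight--variable block represents. The block $\langle\sigma,\sigma,\sigma,\sigma,\varepsilon\sigma,\varepsilon\sigma,\varepsilon\sigma,\varepsilon\sigma\rangle$ takes precisely the values $\sigma(A+\varepsilon B)$, where $A$ and $B$ are each sums of four squares in $\cO_K$. So it suffices to exhibit such a decomposition for $e_\sigma$, and I would case--split on the parity of~$j$. When $j=2m$ is even I would write
\[
e_\sigma = c(\varepsilon^m)^2 + \varepsilon\, d(\varepsilon^m)^2,
\]
while when $j=2m+1$ is odd I would rearrange as
\[
e_\sigma = d(\varepsilon^{m+1})^2 + \varepsilon\, c(\varepsilon^m)^2.
\]
In either case Lagrange's four--square theorem over $\Z$ decomposes $c$ and $d$ into four integer squares, and multiplying each square root by $\varepsilon^m$ or $\varepsilon^{m+1}$ (which lies in $\cO_K$ for every $m\in\Z$ because $\varepsilon$ is a unit) gives the desired four--square decompositions inside~$\cO_K$. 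Hence each $\sigma e_\sigma$ is represented by its block.

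Summing these representations over $\sigma\in S$ realizes $\alpha$ as a value of the orthogonal sum, proving universality; the rank count $8\cdot\#S$ is immediate from the construction. The only conceptual subtlety, and the reason the block size is $8$ rather than $4$, is that the parity of $j$ is absorbed by the factor $\varepsilon$ appearing on the second quadruple of coordinates, so a four--square representation over $\Z$ suffices in each half of the block. I do not anticipate a serious obstacle beyond this parity bookkeeping; the heavier technical work has already been packaged into the two preparatory lemmas (every totally positive integer is a sum of indecomposables, and the two--term compression of $e_\sigma$).
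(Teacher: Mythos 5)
Your proposal is correct and follows essentially the same route as the paper's own proof: write $\alpha=\sum_{\sigma\in S}\sigma e_\sigma$ via Lemma~\ref{observationSumOfIndec}, compress each $e_\sigma$ to $c\varepsilon^j+d\varepsilon^{j+1}$, and then use Lagrange's Four--Square Theorem with the parity of $j$ absorbed by the $\varepsilon$-twisted half of each block. The only difference is that you spell out the odd-$j$ case explicitly where the paper dismisses it as ``very similar''.
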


\begin{proof}
		Let $ \alpha $ be a~totally positive integer and write it as in~(\ref{eqAlpha}). By the preceding lemma, each coefficient $ e_\sigma $ is of the form $ e_\sigma = c\varepsilon^j +d\varepsilon^{j+1} $. All it remains to show is that $ e_\sigma $ is represented by the form		
\[
		\langle 1, 1, 1, 1, \varepsilon, \varepsilon, \varepsilon, \varepsilon \rangle = x_1^2 +x_2^2 +x_3^2 +x_4^2 +\varepsilon\cdot(x_5^2 +x_6^2 +x_7^2 +x_8^2).
\]
		When $ j $ is even, then $ c $ is represented by $ x_1^2 +x_2^2 +x_3^2 +x_4^2 $ by Lagrange's Four Square Theorem, and $ c\varepsilon^j $ also, and the second term $ \varepsilon\cdot d\varepsilon^j $ is represented by $ \varepsilon\cdot(x_5^2 +x_6^2 +x_7^2 +x_8^2) $. The case of odd $j$ is very similar.
\end{proof}

	\subsection*{Sums of continued fraction coefficients}
	How big is $ S $? We would like to bound its size in terms of $ D $, $ \varepsilon $, and the class number $ h_D $.

	In the case of odd $s$ we have the trivial bound
\[
	u_1+u_2+\cdots+u_s \geq u_s = 2\lfloor \sqrt{D} \rfloor > \sqrt{D}.
\]
	On the other hand, we know the following:

\begin{theorem}[{\cite[Corollary 18]
{BK2}}]		There is a~positive constant $ c $ such that
\[
		u_1+u_2+\cdots+u_s \leq c\sqrt{D}(\log D)^2.
\]
\end{theorem}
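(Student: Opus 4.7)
The strategy is to reduce the inequality to a standard analytic number-theoretic estimate. Writing the reduced quadratic irrationals appearing on the periodic tail of the continued fraction of $\sqrt{D}$ in the familiar form $\alpha_i=(P_i+\sqrt{D})/Q_i$ with integers $0\le P_i<\sqrt{D}$ and $1\le Q_i<2\sqrt{D}$ for $i=1,\dots,s$, the elementary observation
\[
u_i\ =\ \left\lfloor\frac{P_i+\sqrt{D}}{Q_i}\right\rfloor\ <\ \frac{2\sqrt{D}}{Q_i}
\]
gives at once
\[
\sum_{i=1}^{s}u_i\ <\ 2\sqrt{D}\sum_{i=1}^{s}\frac{1}{Q_i},
\]
so the task reduces to showing that $\sum 1/Q_i=O((\log D)^2)$.

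The next step controls how often each value of $Q$ can appear among the $Q_i$. Within one period the pairs $(P_i,Q_i)$ are all distinct, and the reducedness conditions confine $P_i$ to an interval of length at most $Q_i$ on which the congruence $P_i^2\equiv D\pmod{Q_i}$ must hold. Setting $\rho(Q):=\#\{x\bmod Q\colon x^2\equiv D\pmod Q\}$, this yields $\#\{i\in[1,s]\colon Q_i=Q\}\le\rho(Q)$, and therefore
\[
\sum_{i=1}^{s}\frac{1}{Q_i}\ \le\ \sum_{Q\le 2\sqrt{D}}\frac{\rho(Q)}{Q}.
\]

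The analytic heart of the proof is the Euler-product identity (essentially the factorisation of the Dedekind zeta function of $K$)
\[
\sum_{Q\ge 1}\frac{\rho(Q)}{Q^s}\ =\ \frac{\zeta(s)\,L(s,\chi_D)}{\zeta(2s)},
\]
where $\chi_D$ is the Kronecker character of $K$. Equivalently, $\rho=\mu^2*\chi_D$, so
\[
\sum_{Q\le X}\frac{\rho(Q)}{Q}\ =\ \sum_{a\le X}\frac{\mu^2(a)}{a}\sum_{b\le X/a}\frac{\chi_D(b)}{b}.
\]
The inner sum is $O(\log D)$ uniformly in $X/a$ (combining the trivial bound $O(\log(X/a))$ for small ranges with the bound $L(1,\chi_D)\ll\log D$ plus Polya--Vinogradov-type error for large ranges), while $\sum_{a\le X}\mu^2(a)/a=O(\log X)$. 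Taking $X=2\sqrt{D}$ produces $O((\log D)^2)$ and hence the theorem.

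The main obstacle I expect is the analytic bookkeeping in the last step: the Euler factors at primes ramified in $K$ and at $p=2$ have to be treated separately, and the Mertens-type passage from the Dirichlet series to the partial sum must be carried out with constants uniform in $D$, so that only one honest logarithmic factor is produced beyond the one hidden in $L(1,\chi_D)$. Once that is in place, the estimate is automatic.
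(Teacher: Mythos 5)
Your argument is correct, but it is a genuinely different (and stronger) route than the one sketched in the survey. The text above derives the estimate from the class number formula: $h_D\log\varepsilon=\sqrt{D}\,L(1,\chi)$ together with $h_D\geq1$ and $L(1,\chi)\ll\log D$ gives $\log\varepsilon\ll\sqrt{D}\log D$, and then $\varepsilon\geq u_{s-1}u_{s-2}\cdots u_0$ yields $\sum_i\log u_i\ll\sqrt{D}\log D$ and $s\ll\sqrt{D}\log D$. As the survey itself admits, this is only a heuristic: controlling $\sum\log u_i$ does not control $\sum u_i$ (exponentiating loses too much), and the actual proof is deferred to \cite[Corollary 18]{BK2}. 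Your proof --- bounding $u_i<2\sqrt{D}/Q_i$ via the complete quotients $(P_i+\sqrt{D})/Q_i$, observing that reducedness traps $P_i$ in an interval of length $Q_i$ so that each value $Q$ occurs at most $\rho(Q)=\#\{x\bmod Q: x^2\equiv D\}$ times, and then estimating $\sum_{Q\leq 2\sqrt{D}}\rho(Q)/Q$ --- is essentially the argument of the cited reference, and all the steps check out: the pairs $(P_i,Q_i)$ are indeed distinct over one period, and distinct integers in an interval of length $Q$ are distinct modulo $Q$. One remark that defuses the obstacle you flag at the end: at the stated level of precision you do not need the Euler product, the character $\chi_D$, or any P\'olya--Vinogradov input at all, since $\rho(Q)\leq 4\cdot 2^{\omega(Q)}\leq 4\,d(Q)$ (the factor $4$ absorbing the prime $2$ and the ramified primes) and the trivial bound $\sum_{Q\leq X}d(Q)/Q\leq\bigl(\sum_{a\leq X}1/a\bigr)^2\ll(\log X)^2$ already gives $\sum_i 1/Q_i\ll(\log D)^2$; the identity $\sum\rho(Q)Q^{-s}=\zeta(s)L(s,\chi_D)/\zeta(2s)$ (up to finitely many Euler factors) is only needed for the sharper bounds alluded to by the phrase ``which can be somewhat improved.''
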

	Roughly speaking, the preceding estimate (which can be somewhat improved) comes from the class number formula
\[
	h_D = \frac{\sqrt{D}}{\log \varepsilon}L(1, \chi).
\]
	It is know that $ L(1, \chi) \ll \log D $, and since clearly $ h_D \geq 1 $,
\[
	\log \varepsilon \ll \sqrt{D}\log D.
\]
	Here
	we use the usual analytic notations
	that $f\ll g$ (and $g\gg f$) if there is a~constant $C>0$ such that $f(x)<Cg(x)$ for all $x$ (that lie in the domains of $f,g$).
	We use $f\ll_P g$ or $g\gg_P f$ to stress that the constant $C$ depends on the specified parameter(s) $P$.

\medskip

	Next we relate $ \log \varepsilon $ to the sum $ \sum_{i=1}^s u_i $. We have
\[
	\varepsilon = \alpha_{s-1} = u_{s-1}\alpha_{s-2}+\alpha_{s-3} \geq u_{s-1}\alpha_{s-2} \geq u_{s-1}u_{s-2}\alpha_{s-3} \geq \dots \geq u_{s-1}u_{s-2}\cdots u_0,
\]
	hence $ \log \varepsilon \geq \sum_{i=1}^s \log u_i $. This at least proves that $ s \ll \sqrt{D}\log D $ (if $ u_i = 1 $ for some $ i $, we have to proceed more carefully). See also~\cite{KM} for a~more detailed discussion.
\medskip

	In the case when $ s $ is even and $ \varepsilon = \alpha_{s-1} $, it is quite subtle trying to estimate $ u_1+u_3+\dots+u_{s-1} $ because it can be small: e.g., for the continued fraction
\[
	\sqrt{n^2 -1} = [n-1, \overline{1, 2(n-1)}],
\]
	$ S $ contains only one element even though $ D = n^2 -1 $ grows to infinity.

	\section{(Non-)Existence of Universal Forms}\label{sec:5}
	Let's now turn our attention more generally to the questions of existence of universal forms and of their properties, such as possible ranks.	(Again, our discussion always applies to quadratic lattices, even when we talk about quadratic forms.)

	We will still (mostly) treat the case of real quadratic fields $ K = \Q(\sqrt{D}) $ in this section. Above, we constructed a~universal form over every such $ K $ (with $ D \equiv 2, 3 \pmod 4 $, although this assumption is not necessary). In fact, a~universal form exists in every number field, and there are (at least) two ways of proving this:
\begin{enumerate}[a)] 
		\item Proceed similarly as in the case of $ \Q(\sqrt{D}) $ (see Corollary~\ref{cor:6.2} below).
		\item Hsia--Kitaoka--Kneser~\cite[Theorem 3]{HKK} showed a~local-global principle for representations of elements with sufficiently large norm by $ Q $, provided that the rank of $ Q $ is at least 5. So one can:
\begin{itemize}
			\item Find a~form $Q_0$ that represents everything locally over all the finite places. For example, $Q_0=\langle 1,1,1,\alpha\rangle$ where $\alpha\succ 0$ has additive valuation 1 at each dyadic place works, for already $\langle 1,1,1\rangle$ is locally universal at all non-dyadic places~\cite[92:1b]{OM}, and at the dyadic places, one can use Beli's theorem~\cite[Theorem 2.1]{Bel}. Alternatively, one can use Riehm's (much older) theorem~\cite[Theorem 7.4]{Ri} thanks to which it suffices to make sure that all classes mod 2 are represented -- which is easily arranged by adding extra variables.
			\item If necessary, add variables to $Q_0$ to obtain $Q$ of rank $\geq 5$, for which one can use the asymptotic local-global principle~\cite[Theorem 3]{HKK}.
			\item Finally, add extra variables to cover the (finitely many) square classes of elements of small norms that are not represented by $Q$.
		
\end{itemize}
	
\end{enumerate}

{It is easy to see that there is never a~universal form of rank $ r = 1 $ or $ 2 $ (for local reasons). Moreover, when the degree $ d $ of $ K $ is odd, it quickly follows from Hilbert's reciprocity law that there is no ternary universal form~\cite[Lemma 3]{EK1}.}

\medskip

The most natural candidate for a~universal form would be the sum of squares. Unfortunately, it is almost never universal, for Siegel~\cite{Si2} showed that
		a sum of squares is universal over $ \cO_K $ only for
		
\begin{itemize}
			\item 			$ K = \Q $ \ \ \ \ \ \ (when 4 squares suffice) and
			\item $ K = \Q(\sqrt{5}) $ (when 3 squares suffice~\cite{Ma}).
		
\end{itemize}
	The proof considers representations of units and indecomposables and is sketched below as Theorem~\ref{th:8.1}.

	One thus has to consider more general quadratic forms and aim at various classification results. This has been the most successful in the quadratic case.

\begin{theorem}[{\cite[Theorem 1.1]
{CKR}}]		If $ K = \Q(\sqrt{D}) $ has a~ternary classical universal form, then $ D =2, 3 $, or $5$. In total, there are 11 such forms; examples in the three cases are
\begin{itemize}
			\item $ x^2 +y^2 +(2+\sqrt{2})z^2 $ \ for $ D = 2 $,
			\item $ x^2 +y^2 +(2+\sqrt{3})z^2 $ \ for $ D = 3 $,
			\item $ x^2 +y^2 +\frac{5+\sqrt{5}}{2}z^2 $\ \ \ \ \ \ for $ D = 5 $.
\end{itemize}
\end{theorem}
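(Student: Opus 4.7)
The plan is to reduce the problem to the study of a binary auxiliary lattice $L$ by splitting off the unit $1$ from $Q$, and then to constrain $L$ by testing the representation of a few small totally positive indecomposables.

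Since the classical universal ternary form $Q$ represents the unit $1 \in \cO_K^\times$, Proposition~\ref{pr:split} yields an orthogonal splitting $Q \cong \langle 1 \rangle \perp L$, where $L$ is a binary, classical, totally positive definite $\cO_K$-lattice. Universality of $Q$ then becomes the condition that for every $\alpha \in \cO_K^+$ there exists $x \in \cO_K$ with $\alpha - x^2$ either equal to $0$ or represented by $L$. For a fixed $\alpha$, the constraint $\alpha \succeq x^2$ leaves only finitely many candidates for $x$, since it bounds $|\sigma(x)| \leq \sqrt{\sigma(\alpha)}$ under each of the two real embeddings of $K$.

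Next, I would feed this condition into the Dress--Scharlau classification of indecomposables as semi-convergents of $\sqrt D$ and focus on small-norm test elements: the totally positive fundamental unit $\varepsilon$ and the low-index semi-convergents $\alpha_{-1,t} = 1 + t\alpha_0$, all of norm at most $D$. The main case split is on whether $L$ itself represents $1$. If it does, a further application of Proposition~\ref{pr:split} gives $L \cong \langle 1 \rangle \perp \langle c \rangle$, so $Q$ is diagonal as $\langle 1,1,c \rangle$; the diagonal counting argument from Section~\ref{sec:4} then bounds the number of square classes of indecomposables by $3$. Expressing this count via the partial sums $u_1 + u_3 + \cdots$ and the index $[\cO_K^{\times,+}:(\cO_K^\times)^2]$, one checks that it already exceeds $3$ for all but the smallest $D$. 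If instead $L$ does not represent $1$, then $\min L \succ 1$, and each small totally positive indecomposable $\alpha \neq 1$ yields a constraint ``$L$ represents $\alpha - x^2$'' for finitely many admissible $x$, and together these pin the Gram matrix of $L$ down tightly enough to exclude large $D$.

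The main technical obstacle lies in the non-diagonal sub-case: the off-diagonal Gram entry of $L$ couples its two diagonal entries, so one must combine the constraints coming from several indecomposables simultaneously while keeping track of $\mathrm{GL}_2(\cO_K)$-equivalence. Once $D$ is forced into $\{2,3,5\}$, one finishes by enumerating the finitely many candidate binary lattices $L$ for each of these three fields and verifying universality of $\langle 1 \rangle \perp L$, e.g., through a local--global argument based on the Hsia--Kitaoka--Kneser asymptotic principle together with direct verification of the (finitely many) small-norm representations; this produces the claimed list of $11$ forms.
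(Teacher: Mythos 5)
First, note that the survey does not actually prove this theorem: it is quoted from Chan--Kim--Raghavan~\cite{CKR} with only the statement and examples, so there is no in-paper proof to compare against. Your opening move --- splitting off $\langle 1\rangle$ via Proposition~\ref{pr:split} to get $Q\cong\langle 1\rangle\perp L$ with $L$ binary and classical, then testing $L$ against small indecomposables --- is the right general strategy and consistent with how such classifications are attacked. However, two of your key steps fail as stated.

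In the diagonal subcase you claim that the number of square classes of indecomposables ``already exceeds $3$ for all but the smallest $D$.'' This is false: the survey itself points out in Section~\ref{sec:5} that for $\sqrt{D}=[u_0,\overline{1,\dots,1,2u_0}]$, e.g.\ $D=n^2-1$, the set $S$ has a single element, and since the fundamental unit is then totally positive there are exactly two square classes of indecomposables, represented by $1$ and $\varepsilon$. So the counting argument leaves $\langle 1,1,\varepsilon\rangle$ as a candidate ternary diagonal universal form over infinitely many fields, and excluding it requires actually exhibiting a totally positive integer that it fails to represent; this forces one to control representations of \emph{decomposable} elements as well, which is where the real content of the CKR argument lies and which your outline does not address. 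Second, your verification of universality for the surviving forms over $D=2,3,5$ invokes the Hsia--Kitaoka--Kneser asymptotic local--global principle, but as recalled in Section~\ref{sec:5} that theorem requires rank at least $5$; for ternary lattices the local--global principle is precisely what breaks down, and proving universality of, say, $x^2+y^2+\frac{5+\sqrt{5}}{2}z^2$ needs genuinely different tools (one-class genera, explicit identities, or Hilbert modular methods as in Maass's theorem over $\Q(\sqrt{5})$). Finally, the non-diagonal subcase is only gestured at: ``pinning down the Gram matrix'' of $L$ from finitely many representation constraints requires a quantitative escalation-style argument with explicit Cauchy--Schwarz bounds on the off-diagonal entry, which you have not supplied.
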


The best available result in this direction is:

\begin{theorem}[{\cite[Theorem 3.2]
{KP2}}]		If $ K = \Q(\sqrt{D}) $ has a~universal lattice of rank $\leq 7$ (and $D$ is squarefree), then $ D< (576283867731072000000005)^2$.
	
\end{theorem}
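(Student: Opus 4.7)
The plan is to extend the indecomposable-based rank lower bound of Section~\ref{sec:4} from diagonal forms to arbitrary classical universal lattices, and then to feed the resulting constraint on the continued fraction of $\sqrt{D}$ through the class number formula to isolate $D$. Throughout, I would reduce to the classical case by a standard scaling argument, aiming to produce an upper bound on the number $\#S$ of square classes of indecomposables modulo $\cO_K^{\times,+}$ that depends only on the rank.

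First, I would iteratively strip off units using Proposition~\ref{pr:split}. Since a universal $L$ represents $1$, one obtains $L = \langle 1 \rangle \perp L_1$ with $\mathrm{rank}(L_1) \leq 6$. Any totally positive unit further represented by $L_1$ splits off in the same way, at the cost of one more rank. This leaves a residual lattice $L'$ of rank $r' \leq 7$ that must still represent every totally positive indecomposable lying in a square class distinct from those already accounted for.

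Next, I would invoke the classification of indecomposables in $\Q(\sqrt{D})$ as semiconvergents $\alpha_{i,t}$, whose representatives modulo $\cO_K^{\times,+}$ form a set $S$ of size $\#S=u_1+u_3+u_5+\cdots$. For each remaining square class, pick a representing vector $v\in L'$. Applying the Cauchy--Schwarz inequality $Q(v)Q(w)\geq B(v,w)^2$ in \emph{both} real embeddings of $K$ constrains the coordinates of $v$ in an orthogonal basis of $L'\otimes K$ to a short list of candidate square classes; a careful book-keeping then bounds the number of mutually distinct indecomposable square classes representable by $L'$ in terms of $r'\leq 7$ alone, giving an absolute upper bound on $\#S$.

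Finally, this bound on $\#S$ feeds into the quantitative estimates from Section~\ref{sec:4}: using $\log\varepsilon\geq\sum_{i}\log u_i$ together with the class number formula $\log\varepsilon=\sqrt{D}\,L(1,\chi)/h_D$, the trivial lower bound $h_D\geq 1$, and an effective version of $L(1,\chi)\ll\log D$, a bound on the $u_i$'s converts into a bound on $D$. The pathological even-period case (as in $\sqrt{n^2-1}=[n-1,\overline{1,2(n-1)}]$, where $\#S$ stays bounded while $D\to\infty$) must be handled separately, using that even period forces the Pell equation $x^2-Dy^2=-1$ to be unsolvable, and extracting extra arithmetic information from $u_s=2\lfloor\sqrt{D}\rfloor$. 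The main obstacle will be the effective numerical book-keeping: each step above is naturally qualitative, but the stated threshold requires tracking sharp explicit constants through the Cauchy--Schwarz support analysis, the splitting iteration, the analytic bound on $L(1,\chi)$, and the pathological-case analysis; the enormous size of $(576283867731072000000005)^2$ reflects the accumulation of slack along this chain.
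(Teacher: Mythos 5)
Your route is genuinely different from the paper's, and it has gaps that I do not think can be repaired. The actual proof behind this theorem (Kim--Kim--Park, building on B.~M.~Kim) does not count indecomposables at all: it takes the sublattice of $L$ generated by vectors representing the rational integers $1,2,\dots,290$ and the sublattice generated by vectors representing $\lceil m\sqrt D\rceil+m\sqrt D$ for $m=1,\dots,290$, shows via the 290-Theorem that each must have rank at least $4$ once $D$ is large, and shows that the two are essentially independent (the second family has second conjugate in $(0,1)$, so Cauchy--Schwarz in the non-identity embedding forces near-orthogonality to vectors representing rational integers), whence the rank is at least $8$. The explicit constant is exactly the quantification of ``once $D$ is large'' in terms of $290$.

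The central gap in your plan is that the quantity you propose to bound, $\#S=u_1+u_3+\cdots$, simply does not control $D$: as noted in Section~\ref{sec:5}, for $\sqrt{n^2-1}=[n-1,\overline{1,2(n-1)}]$ the set $S$ has a single element while $D=n^2-1\to\infty$, and Theorem~\ref{th:construct} then produces an $8$-ary universal form over every such field. So the even-period case is not a pathological remainder to be patched --- it is the whole difficulty, and no upper bound on $\#S$ (or on the odd-indexed partial quotients, which is all the indecomposable machinery ever sees) can bound $D$; your proposed patch via the Pell equation and $u_s=2\lfloor\sqrt D\rfloor$ offers no mechanism, since $u_s$ sits at an even index when $s$ is even. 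Two further steps also fail as stated: (i) your analytic chain runs backwards --- $h_D\geq1$ and $L(1,\chi)\ll\log D$ give an \emph{upper} bound $\log\varepsilon\ll\sqrt D\log D$, which together with $\log\varepsilon\geq\sum\log u_i$ bounds the $u_i$ in terms of $D$, not $D$ in terms of the $u_i$; reversing it needs an effective lower bound for $L(1,\chi)$ (Siegel's theorem, which is ineffective and so cannot yield the stated numerical threshold), and even then Friesen's theorem shows that fixing $u_1,\dots,u_{s-1}$ and $s$ leaves infinitely many $D$; (ii) the claim that Cauchy--Schwarz in both embeddings bounds the number of indecomposable square classes represented by a rank-$\leq7$ lattice by an absolute constant is unsubstantiated --- the known general lower bounds (Theorem~\ref{theoremKT}) involve a single partial quotient $U$, give only $\sqrt U/2$ in the non-classical case, and are proved by a trace-form/minimal-vector argument rather than the book-keeping you describe.
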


	This result builds on~\cite{Ki1}; in fact, Kim--Kim--Park~\cite{KP2} give more precise results, also separately for classical lattices. The proof is based on considering the sublattice representing $1,2,\dots, 290$ (it must have rank at least 4 when $D$ is large thanks to the 290-Theorem), and the sublattice representing 
 \[
 \lceil1\cdot\sqrt D\rceil +1\cdot\sqrt D, \lceil2\cdot\sqrt D\rceil +2\cdot\sqrt D, \dots, \lceil290\cdot\sqrt D\rceil +290\cdot\sqrt D
 \]
 (that also must have rank $\geq 4$).

	Note that there is an 8-ary universal form over each $\Q(\sqrt{n^2 -1})$ (when $n^2 -1$ is squarefree)~\cite{Ki2} that is constructed precisely as in Theorem~\ref{th:construct}.

\medskip
	Such results on determining the small possible ranks of universal lattices are motivated by Kitaoka's conjecture.

\begin{conjecture}[Kitaoka]
		There are only finitely many totally real number fields $ K $ having a~ternary universal form.
	
\end{conjecture}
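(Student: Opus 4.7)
Kitaoka's conjecture is a major open problem; what follows is a program rather than an argument. The target is to show that only finitely many totally real $K$ admit a ternary totally positive definite universal $\cO_K$-lattice $(L,Q)$. Assume $Q$ is classical (the non-classical case is analogous). Because $Q$ represents $1$, Proposition~\ref{pr:split} gives $L=\langle 1\rangle\perp L'$ with $L'$ a binary classical lattice whose Gram matrix I write as $\bigl(\begin{smallmatrix}a & b \\ b & c\end{smallmatrix}\bigr)$, where $a,c\in\cO_K^+$, $b\in\cO_K$, and $\Delta:=ac-b^2\in\cO_K^+$. Universality says that for every $\alpha\in\cO_K^+$ there are $x,y,z\in\cO_K$ with $\alpha=x^2+ay^2+2byz+cz^2$.

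The plan is to play this representability off a rich supply of indecomposables in order to trap $K$ in a finite list. Concretely: (i) for each $K$, produce many square classes (modulo $(\cO_K^{\times})^2$) of indecomposables $\alpha$ with controlled traces; (ii) for each such $\alpha$, combine totally positive definiteness with Cauchy--Schwarz applied to the above representation to bound $x,y,z$ in terms of $\alpha$ and $\Delta$; (iii) exploit indecomposability of $\alpha$ to force it into a restricted family of square classes modulo units, determined by the entries of $L'$; (iv) conclude that the set of square classes of indecomposables which $L$ can possibly represent is absolutely bounded in terms of $\Delta$ and a few auxiliary invariants, so that whenever $K$ carries strictly more indecomposable square classes than this bound allows, no ternary universal lattice can exist over $\cO_K$. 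A quantitative lower bound on the number of square classes of indecomposables, growing with (say) $\disc K$, would then complete the proof.

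The principal obstacle is the uniform count in step~(i). In real quadratic fields the Dress--Scharlau description via semiconvergents and the estimates behind Theorem~\ref{th:construct} supply what is needed, and this machinery---combined with the use of sublattices representing shifted integers $\lceil n\sqrt D\rceil+n\sqrt D$---underlies the bound of~\cite{KP2} establishing Kitaoka's conjecture for real quadratic fields and rank~$\leq 7$. Already in totally real cubic fields, however, no continued-fraction analogue is known: Brunotte's general bound~\cite{Br1,Br2} is exponential in the regulator and too weak to force growth of the square-class count, and the refinements~\cite{JK,Ka2,TV} are specific to degree~$2$. A secondary obstacle is that indecomposability of $\alpha=Q(v)$ no longer pins $v$ to a basis vector when $Q$ is non-diagonal, so step~(iii) must replace clean coefficient matching by delicate inequalities. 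Finally, families such as $\Q(\sqrt{n^2-1})$ show that natural invariants (like $\#S$ from Section~\ref{sec:4}, or the regulator) can remain bounded even as $\disc K\to\infty$, so the putative exceptional set cannot be isolated through such invariants alone---reflecting why the conjecture has resisted proof for decades.
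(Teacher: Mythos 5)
The statement you were asked about is Kitaoka's conjecture, which the survey explicitly records as still open; the paper contains no proof of it, so there is nothing to compare your argument against, and you were right to present a program rather than pretend to a proof. Your outline is essentially the same heuristic the survey itself advances: split off represented units and indecomposables (Proposition~\ref{pr:split}) to push the rank of a universal lattice above $3$, with the whole difficulty concentrated in producing enough square classes of indecomposables uniformly in $K$. The paper's sharpest general remark in this direction (Section~\ref{sec:6}) is that a classical universal lattice has rank at least $\#\,\cO_K^{\times,+}/\cO_K^{\times 2}=2^k$, where $k$ is the number of totally positive fundamental units; so for classical lattices the conjecture reduces to exhibiting a couple of non-unit indecomposable square classes precisely when this unit group is small --- which is your step (i), and you correctly identify it as the crux and as the point where all existing technology (Dress--Scharlau, continued fractions) is confined to degree $2$. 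Your diagnosis of the secondary obstacles (non-diagonal forms defeating the clean coefficient-matching argument; families like $\Q(\sqrt{n^2-1})$ where the natural invariants stay bounded) also matches the paper's discussion.

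Two attributions should be tightened. The finiteness of real quadratic fields with a ternary \emph{classical} universal form is the full classification of~\cite{CKR} ($D=2,3,5$); the result of~\cite{KP2} is the quantitative bound for universal lattices of rank $\leq 7$ over real quadratic fields, and the general ``weak Kitaoka'' statement --- finitely many fields in each fixed degree $d$ --- is due to B.~M.~Kim and Kala--Yatsyna~\cite{KY3}, not to~\cite{KP2}. You could also note that~\cite{EK1} disposes of all odd degrees via Hilbert reciprocity, so the genuinely open content of the conjecture sits in even degrees with $\#\,\cO_K^{\times,+}/\cO_K^{\times 2}\leq 2$. Finally, in step (ii) the relevant tool for bounding representing vectors over $\cO_K$ is not Cauchy--Schwarz alone but the passage to the trace form $\Tr(\delta Q)$ for $\delta\in\cO_K^{\vee,+}$, as in the proofs of Theorems~\ref{theoremKT} and~\ref{theoremKYLifting}; any serious implementation of your program would have to be phrased that way.
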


	The conjecture still remains open. However, B. M. Kim and Kala--Yatsyna~\cite{KY3} proved at least a~weak version of the conjecture saying that \textit{when the degree $d$ of $K$ is fixed, then there are only finitely many such fields $K$.}

	Some further interesting results are~\cite{CL+, De1, De2, Le, KTZ, Sa2}.

	\subsection*{Lower bounds on ranks}

	Surprisingly, it turns out that universal lattices can require arbitrarily large ranks.

\begin{theorem}[{\cite[Theorem 1]
{BK1},~\cite[Theorem 1.1]{Ka1}}]		For any positive integer $ r $, there are infinitely many quadratic fields $ \Q(\sqrt{D})$ that do \emph{not} have a~universal lattice of rank $ \leq r $.
	
\end{theorem}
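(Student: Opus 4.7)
The plan is, for each fixed $r$, to exhibit an infinite family of squarefree $D=D_n$ such that $\Q(\sqrt{D_n})$ contains more totally positive indecomposable integers of small conjugate than any rank-$\le r$ lattice can simultaneously represent.

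First I would fix the family: take squarefree $D_n$ that is close to a perfect square, so that the first partial quotient $u_1$ in the continued fraction of $\sqrt{D_n}$ tends to infinity. For example, among $\{N^2+1:N\ge 1\}$ infinitely many are squarefree and then $u_1=2N$. For these $D_n$ the Datskovsky--Schinzel theorem furnishes $u_1$ totally positive indecomposable integers
\[
\alpha_{-1,t}=1+t(\lfloor\sqrt{D_n}\rfloor+\sqrt{D_n}),\qquad 0\le t<u_1,
\]
pairwise distinct, of norms $N(\alpha_{-1,t})\le D_n$, and with small conjugates $\alpha'_{-1,t}\in(0,1)$.

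The heart of the argument is a uniform counting lemma: \emph{there is a constant $C_r$ depending only on $r$ such that, for every real quadratic field $K$ and every totally positive definite integral $\cO_K$-lattice $(L,Q)$ of rank $\le r$, at most $C_r$ distinct elements $\alpha\in\cO_K^+$ with $\alpha'<1$ and $N(\alpha)\le\disc(K)$ are represented by $Q$.} I would prove this by embedding $\cO_K^r$ into $\R^{2r}$ via the two real places; the joint constraints $Q(v)\le D$ and $Q'(v)\le 1$ then cut out, inside $\R^r\times\R^r$, the product of two ellipsoids of total volume $\ll_r D^{r/2}/\sqrt{N(\det Q)}$. Dividing by the covolume $\asymp D^{r/2}$ of the Minkowski image of $\cO_K^r$, the $D$-factors cancel exactly, and the integrality estimate $N(\det Q)\ge 4^{-r}$ (with the sharper $\ge 1$ in the classical case) controls the remaining denominator. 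This gives $O_r(1)$ lattice points, hence $O_r(1)$ distinct $\alpha$.

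Combining the two parts: for $n$ so large that $u_1(D_n)>C_r$, the $u_1$ indecomposables $\alpha_{-1,t}$ cannot all be represented by any rank-$\le r$ lattice over $\Q(\sqrt{D_n})$, so no such universal lattice exists, and this occurs for infinitely many $n$. The main obstacle is making $C_r$ genuinely uniform across all $K$ and all $L$: it is essential that the $D^{r/2}$ factor from the volume of $\{Q(v)\le D\}$ cancels exactly against the $D^{r/2}$ factor in the Minkowski covolume of $\cO_K^r$, and extra care is needed in the non-classical case, where the Gram matrix has entries in $\tfrac12\cO_K$ and one loses a factor of $2^r$ in $N(\det Q)$.
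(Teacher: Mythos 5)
Your choice of family and of test elements is exactly right and matches the strategy of \cite[Theorem 1.1]{Ka1} and of Theorem~\ref{theoremKT}: take $D$ with one huge partial quotient $u_1$, so that the cluster of semiconvergents $\alpha_{-1,t}=1+t(\lfloor\sqrt D\rfloor+\sqrt D)$, $0\le t<u_1$, gives many indecomposables with conjugates in $(0,1)$, and then argue that a lattice of bounded rank cannot represent them all. The problem is that your ``uniform counting lemma'', which carries the entire weight of the second step, is both false as stated and not proved by the volume argument you propose. It is false as stated because the rank-one lattice $\langle 1\rangle$ already represents infinitely many $\alpha\in\cO_K^+$ with $\alpha'<1$ and $N(\alpha)\le\disc_{K/\Q}$, namely all even powers $\varepsilon^{2k}$ of the fundamental unit; so the hypotheses must at least be tightened to $\alpha\le CD$ and $\alpha'\le 1$ (which the $\alpha_{-1,t}$ do satisfy). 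More seriously, even for the corrected body $B=\{v:\ Q(v)\le CD,\ Q'(v)\le 1\}$, the number of lattice points of $L$ in a symmetric convex body is \emph{not} bounded by $O\bigl(1+\mathrm{vol}(B)/\mathrm{covol}(L)\bigr)$: it is governed by the successive minima, and can exceed the volume ratio by the eccentricity of $B$ relative to $L$. Here that eccentricity is on the scale of the totally positive fundamental unit (consider $Q$ obtained from another form by rescaling a variable by a unit), which can be exponentially large in $\sqrt D$; the only general lower bound on the first minimum, coming from $N(Q(v))\ge 1$, is $\lambda_1\gg D^{-1/4}$, which is far too weak to convert the $O_r(1)$ volume into $O_r(1)$ points. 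So the exact cancellation of the $D^{r/2}$ factors, which you correctly identify as the crux, does not by itself yield a bound on the count.

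What the cited proofs do instead is replace the volume heuristic by an exact arithmetic statement. In \cite{BK1, Ka1} one shows directly that vectors $v_t$ with $Q(v_t)=\alpha_{-1,t}$ are forced to be pairwise orthogonal (or at least to span a sublattice of large rank), using the Cauchy--Schwarz inequality together with the integrality of $2B(v_t,v_u)$ and the smallness of all the relevant conjugates. In the sharper approach of Theorem~\ref{theoremKT}, one takes $\delta=-\frac{1}{2\sqrt D}\alpha_0'\in\cO_K^{\vee,+}$, checks $\Tr(\delta\alpha_{-1,t})=1$, and observes that the $\Z$-lattice $(\Z^{2r},\Tr(\delta Q))$ then contains $2u_1$ vectors of length one; the conclusion follows from the fact that a positive definite $\Z$-lattice of rank $2r$ has at most $2\cdot 2r$ such vectors in the classical case (split off $\langle 1\rangle$ repeatedly via Proposition~\ref{pr:split}) and at most $\max\bigl((2r)^2,240\bigr)$ in general. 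That bound on the number of length-one vectors is precisely the uniform-in-$K$ statement your $C_r$ is meant to be, and it is obtained by arithmetic of $\Z$-lattices rather than by comparing volumes. If you want to salvage your write-up, I would keep your family and your elements $\alpha_{-1,t}$ but prove the counting lemma this way.
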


	The broad idea behind the proof is the following. In a~universal lattice $ (L, Q) $, construct a~sublattice that must have rank $ \geq r $, for example by arranging it to contain pairwise orthogonal vectors $ v_i $ representing suitable convergents $ \alpha_i $.

	A more precise result in this direction was obtained by Kala--Tinkov\' a~\cite{KT}, with inspiration by earlier results of Yatsyna~\cite{Ya}.

\begin{theorem}[{\cite[Sections 7.1 and 7.3]
{KT}}]		\label{theoremKT}
		Let $ \sqrt{D} = [u_0, \overline{u_1, \dots, u_s}] $ and
		
\[
		U =
\begin{cases}
			\max(u_1, u_3, \dots, u_{s-1}),&\text{if }s\text{ is even},\\
			\sqrt{D},&\text{if }s\text{ is odd}.
\end{cases}
\]
		Let $ Q $ be a~universal quadratic form over $ \Q(\sqrt{D}) $ of rank $ r $.
		
\begin{enumerate}[a)]
			\item If $ Q $ is classical, then $ r \geq U/2 $.
			\item In general, $ r \geq \sqrt{U}/2 $ (assuming $ U \geq 240 $).
\end{enumerate}
\end{theorem}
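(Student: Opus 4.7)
The plan is to exploit the abundance of indecomposable semiconvergents in arithmetic progression and convert their simultaneous representation by $Q$ into a lower bound on $r$ through a Gram-matrix analysis.

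Fix an odd index $i$ with $u_{i+2}=U$: for $s$ even this is some $i\in\{-1,1,\dots,s-3\}$ directly, while for $s$ odd the doubled period (the totally positive fundamental unit is $\alpha_{2s-1}$) forces some $u_{i+2}$ in the range $i\in\{-1,1,\dots,2s-3\}$ to equal $u_s=2\lfloor\sqrt{D}\rfloor\geq\sqrt{D}$. The theorem on indecomposables above then supplies $U$ indecomposable totally positive semiconvergents
\[
\beta_t := \alpha_i + t\alpha_{i+1},\qquad 0\leq t<U,
\]
each of norm at most $D$. Universality of $Q$ furnishes vectors $w_t\in L$ with $Q(w_t)=\beta_t$. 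Note also that indecomposability of $\beta_t$ forces $w_t$ to be primitive in $L$, for if $w_t=cv$ with $c\in\cO_K$ satisfying $c^2\succ 1$, one would obtain the decomposition $\beta_t=Q(v)+(c^2-1)Q(v)$.

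For part (a), I would analyze the $U\times U$ Gram matrix $G=(B(w_t,w_{t'}))_{t,t'}$. Classicality places its entries in $\cO_K$; the diagonal is $(\beta_t)$; and in each of the two real embeddings $\sigma_j$, Cauchy--Schwarz gives $\sigma_j(B(w_t,w_{t'}))^2\leq \sigma_j(\beta_t)\sigma_j(\beta_{t'})$. Combined with $N(\beta_t)\leq D$, this pins each off-diagonal $B(w_t,w_{t'})$ into a small finite subset of $\cO_K$. A careful combinatorial analysis (using the distinctness of the diagonal entries $\beta_t$ together with the primitivity of each $w_t$ to rule out too many linear relations among the rows of $G$, and leveraging Proposition~\ref{pr:split} once a unit $\beta_t$ can be split off) then shows that the $w_t$ span a $K$-subspace of $L\otimes K$ of dimension at least $U/2$, giving $r\geq U/2$.

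For part (b), without classicality the off-diagonal $B(w_t,w_{t'})$ lies only in $\tfrac12\cO_K$, enlarging the envelope of admissible Gram entries and weakening the dimension count. Rescaling to the classical lattice $(L,2Q)$ of the same rank and rerunning the argument yields the square-root--weakened bound $r\geq\sqrt{U}/2$; the hypothesis $U\geq 240$ is what activates the base case of the combinatorial step. The principal technical obstacle is precisely this Gram-matrix combinatorics: one must quantify how primitivity and the Cauchy--Schwarz envelope interlock to force linear independence among the $w_t$, and track why this control degrades from linear in $U$ to square-root in $U$ when the lattice is merely integral rather than classical.
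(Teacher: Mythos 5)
You correctly identify the right family of witnesses (the $U$ indecomposable semiconvergents $\alpha_{i,t}=\alpha_i+t\alpha_{i+1}$ with $u_{i+2}=U$, including the reduction to $u_s=2\lfloor\sqrt D\rfloor$ when $s$ is odd), but the core of your argument --- the ``careful combinatorial analysis'' of the $\cO_K$-valued Gram matrix --- is exactly the part that is missing, and as described it would not work. The Cauchy--Schwarz envelope $\sigma_j(B(w_t,w_{t'}))^2\leq\sigma_j(\beta_t)\sigma_j(\beta_{t'})$ does not confine the off-diagonal entries to a small set: $\sigma_1(\beta_t)$ grows like $t\sqrt D$, so the envelope has size on the order of $U^2D$ and admits a huge number of integral points. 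Nothing in the proposal explains how distinctness of the $\beta_t$ or primitivity of the $w_t$ forces $U/2$ of them to be linearly independent. (Your primitivity argument also has a small flaw: a non-unit $c\in\cO_K$ need not satisfy $c^2\succ 1$, so $\beta_t=Q(v)+(c^2-1)Q(v)$ need not be a decomposition into totally positive parts.) The paper's proof replaces all of this with one key device you do not use: choose $\delta=-\tfrac{1}{2\sqrt D}\alpha_{i+1}'\in\cO_K^{\vee,+}$, verify $\Tr(\delta\alpha_{i,t})=p_iq_{i+1}-p_{i+1}q_i=1$ for all $t$, and pass to the rank-$2r$ $\Z$-lattice $(\Z^{2r},\Tr(\delta Q))$, in which the $\pm w_t$ become $2U$ vectors of length $1$. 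For classical $Q$ the trace form is classical, so Proposition~\ref{pr:split} splits off $\langle 1\rangle$ a total of $U$ times (length-one vectors in a classical positive definite $\Z$-lattice are pairwise orthogonal up to sign), giving $2r\geq U$.

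Your treatment of part (b) is also not salvageable as stated: rescaling to $(L,2Q)$ does not explain why the bound should degrade from linear to square-root, and ``activates the base case of the combinatorial step'' is not an argument. The actual reason for both the square root and the constant $240$ is a known quantitative fact about non-classical $\Z$-lattices: a non-classical positive definite $\Z$-lattice of rank $n$ has at most $\max(n^2,240)$ vectors of length one (the $240$ coming from the rescaled $E_8$ lattice). Applied to $(\Z^{2r},\Tr(\delta Q))$ with its $2U$ length-one vectors, this gives $2U\leq\max(4r^2,240)$ and hence $r\geq\sqrt U/2$ once $U\geq 240$. Without the passage to the trace form over $\Z$, you have no access to this counting result, which is why the gap in your Gram-matrix approach is structural rather than cosmetic.
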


	To prove the theorem, we want to use minimal vectors in a~quadratic lattice $ (L, Q) $, i.e., nonzero vectors $ v $ such that $ Q(v) $ is minimal. This approach works best over $ \Z $, so we need to obtain a~$ \Z $-lattice. In general, if $ [K:\Q] = d $ and $ L $ is an $ \cO_K $-lattice of rank $ r $, then $ L $ can be naturally viewed as a~$ \Z $-lattice of rank $ rd $. Indeed,
\[
	L = \cO_K v_1+\cdots+ \cO_K v_{r-1}+A v_r
\]
	for some fractional ideal $ A $. Now $ \cO_K $ and $ A $ are isomorphic to $ \Z^d $ as $ \Z $-modules and hence we can identify $ L \simeq \Z^{dr} $ as a~$ \Z $-module.

	We will consider the quadratic form $ \Tr(\delta Q) $ for a~suitable $ \delta $. We choose $ \delta $ to satisfy that
	
\begin{itemize}
		\item $ \delta $ is a~totally positive element (for then $ \Tr(\delta Q) $ is positive definite), and
		\item $ \Tr(\delta Q(v)) \in \Z $ for any $ v \in L $.
	
\end{itemize}

	This naturally leads us to looking at the \emph{codifferent}
\[
	 \cO_K^\vee = \{\delta \in K\mid \forall \alpha \in \cO_K:\;\Tr(\delta\alpha)\in\Z\}.
\]

	If $ \cO_K = \Z[\sqrt{D}] $, then $ \cO_K^\vee = \frac{1}{2\sqrt{D}}\cdot \cO_K $. The inclusion $ \supset $ is easy to prove as
	
\[
	\Tr\left(\frac{1}{2\sqrt{D}}(a+b\sqrt{D})\right) = \Tr\left(\frac{b}{2}+\frac{a}{2D}\sqrt{D}\right) = b
\]
	for $ a, b \in \Z $ (and the other inclusion is not too hard either).

	We next make the following observation: Let $ \alpha \in \cO_K^+ $. If there exists $ \delta \in \cO_K^{\vee, +} $ such that $ \Tr(\delta\alpha) = 1 $, then $ \alpha $ is indecomposable. For if $ \alpha = \beta+\gamma $ for $ \beta, \gamma \in \cO_K^+$, then
\[
	1 = \Tr(\delta\alpha) = \Tr(\delta\beta)+\Tr(\delta\gamma) \geq 2.
\]
	Now we have what we need to prove Theorem~\ref{theoremKT}.

\begin{proof}[Sketch of proof of Theorem $\ref{theoremKT}$]
$~$

		{Step 1.} Let $ U = u_{i+2} $ for some odd $i$ and consider the indecomposables $ \alpha_{i, t} $, $ 0 \leq t < U $. We define $ \delta = -\frac{1}{2\sqrt{D}}\alpha_{i+1}' $. It can be checked directly that $ \delta \in \cO_K^\vee $ and $ \delta $ is totally positive. Next we compute the trace of	
\[
		\delta \alpha_{i, t} = -\frac{1}{2\sqrt{D}}(p_{i+1}-q_{i+1}\sqrt{D})\cdot(p_i+q_i\sqrt{D})-\frac{t\sqrt{D}}{2D}\alpha_{i+1}'\alpha_{i+1}.
\]
		Since $ \alpha_{i+1}'\alpha_{i+1} = N(\alpha_{i+1}) \in \Z $, we have	
\[
		\Tr(\delta \alpha_{i, t}) = p_iq_{i+1}-p_{i+1}q_i = (-1)^{i+1} = 1.	
\]
\medskip

		{Step 2.} Take a~quadratic $ \cO_K $-lattice $ (L, Q) $ representing all the indecomposables $ \alpha_{i, t} $, $ 0 \leq t < U $, so that $ Q(v_t) = \alpha_{i, t} $ for some $ v_t \in L$. Then $ (\Z^{2r}, \Tr(\delta Q)) $ is a~$ \Z $-lattice containing $ 2U $ vectors of length 1, namely $ \pm v_t $, as
\[
		\Tr(\delta Q(\pm v_t)) = \Tr(\delta \alpha_{i, t}) = 1.
\]
		Observe that if $ Q $ is classical, then $ \Tr(\delta Q) $ is also classical. Therefore by repeatedly splitting off 1 (see Proposition~\ref{pr:split}) we get
\[
		\Z^{2r} = \langle 1 \rangle \perp \langle 1 \rangle \perp \cdots \perp \langle 1 \rangle \perp L',
\]
		where $ \langle 1 \rangle $ is repeated $ U $ times in the diagonal part. Thus $ 2r \geq U $.

\medskip
		{Step 3.} If $ Q $ is non-classical, we use known bound on the number of length-one vectors: There are $ \leq \max(r^2,240) $ of them in a~non-classical $ \Z $-lattice of rank $ r $.
\end{proof}

	\subsection*{Summary}
	Denote $ m(K) $ the minimal rank of a~universal $ \cO_K $-lattice over $ K $ and $ m_{class}(K) $ the minimal rank of a~classical universal $ \cO_K $-lattice.

	We saw that for $ K = \Q(\sqrt{D}) $ with $ \sqrt{D} = [u_0, \overline{u_1, \dots, u_s}] $, we have
\begin{align*}
		\frac{1}{2}\max(u_i)^{1/2}& \leq m(K) \leq 8 \sum_{i=1}^s u_i \ll \sqrt{D}(\log D)^2 \\
		\frac{1}{2}\max(u_i)& \leq m_{class}(K) \leq 8 \sum_{i=1}^s u_i \ll \sqrt{D}(\log D)^2
\end{align*}
	If the fundamental unit is not totally positive (i.e., $ s $ odd), this is not too bad: the lower bound is $ D^{1/4} $ and $ D^{1/2} $ for $ m(K) $ and $ m_{class}(K) $, respectively. In the case when the fundamental unit is totally positive (i.e., $ s $ even), there are arbitrarily large differences between the lower and upper bounds, e.g., for $ \sqrt{D} = [u_0, \overline{1, 1, \dots, 1, 2u_0} ] $, we get $ 1/2 \leq m_{class}(K) \leq 4s $. Obtaining better lower bounds would require including all the indecomposables, not just $ \alpha_{i, t} $ for a~fixed $ i $.

	However, Kala--Yatsyna--\. Zmija recently expanded on these results by showing that
	
\begin{theorem}[{\cite[Theorem 1.1]
{KYZ}}]\label{th1.1}
		Let $\varepsilon >0$.
		For almost all squarefree $D>0$, we have that		
\[
m_{class}(\Q(\sqrt D))\gg_\varepsilon D^{\frac{1}{12} -\varepsilon}\text{\ \ \ and\ \ \ }m(\Q(\sqrt D))\gg_\varepsilon D^{\frac{1}{24} -\varepsilon}.
\]
\end{theorem}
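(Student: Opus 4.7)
The plan is to reduce everything to Theorem~\ref{theoremKT}. Both desired bounds reduce to the single estimate
\[
U(D) \gg_\varepsilon D^{1/12-\varepsilon} \qquad \text{for almost all squarefree } D,
\]
since then $m_{class}(K) \geq U/2 \gg D^{1/12-\varepsilon}$ and $m(K) \geq \sqrt{U}/2 \gg D^{1/24-\varepsilon}$. When the period length $s(D)$ is odd, $U(D) = \sqrt{D}$ by definition and the claim is immediate, so it suffices to handle
\[
\mathcal F := \{D > 0 \text{ squarefree} : s(D) \text{ even}\}
\]
(equivalently, those $D$ for which $x^2 - Dy^2 = -1$ has no solution) and to bound the exceptional set
\[
E(X) := \{D \in \mathcal F : D \leq X,\ U(D) < D^{1/12-\varepsilon}\}
\]
by $o(X)$.

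To estimate $|E(X)|$, I would parametrize its elements via the continued fraction expansion of $\sqrt{D}$. Setting $M := X^{1/12-\varepsilon}$, for each $D \in E(X)$ every odd-indexed partial quotient obeys $u_j \leq M$. Using the standard $(P_i,Q_i)$-recursion (so that $u_i = \lfloor (P_i+\sqrt{D})/Q_i\rfloor$ with $0 < P_i < \sqrt{D}$, $0 < Q_i < 2\sqrt{D}$), the convergent identity $p_i^2 - Dq_i^2 = (-1)^{i+1} Q_{i+1}$, and the palindromic symmetry $u_i = u_{s-i}$ of the period (which preserves the parity of the index when $s$ is even), the restriction $u_j \leq M$ at every odd $j$ imposes a dense system of Pell-like equations with small right-hand side on $D$. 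The target is a parametrization of $E(X)$ by a bounded number of integer tuples whose combined range forces $|E(X)| \ll X^{1-\delta}$ for some $\delta = \delta(\varepsilon) > 0$.

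The hard part will be making this count tight enough to reach $\delta > 0$ at the threshold $M = X^{1/12-\varepsilon}$. The even-indexed $u_i$ remain unconstrained (and can be as large as $2\sqrt{D}$), while the period length $s$ can reach $\sqrt{D}(\log D)^2$, so any naive enumeration over continued-fraction sequences is hopelessly lossy. Overcoming this should require substantial analytic input, plausibly a combination of: (i) mean-value estimates for $L(1,\chi_D)$ over squarefree $D$, coupling $\log \varepsilon_D$ to $\sqrt{D}/h_D$ via the class number formula $h_D \log \varepsilon_D = \sqrt{D}\,L(1,\chi_D)$; (ii) sieve-type bounds on the number of squarefree $D \leq X$ admitting representations $p^2 - Dq^2 = Q$ with $|Q|$ small; and (iii) the reduction theory of binary quadratic forms attached to the $(P_i,Q_i)$-sequence, controlling how often a given reduced cycle arises for varying $D$. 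Calibrating these inputs against the parametric count so that they precisely cancel at the exponent $\tfrac{1}{12}$ is where the delicate analytic-combinatorial work of~\cite{KYZ} would reside, and is the step I expect to be genuinely the hardest.
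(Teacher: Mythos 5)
There is nothing in this survey to compare your argument against: the paper states Theorem~\ref{th1.1} only as a citation of \cite[Theorem 1.1]{KYZ} and gives no proof or sketch of it. Judged on its own terms, your reduction step is correct and is surely the intended one: by Theorem~\ref{theoremKT}, both bounds follow from the single assertion that $\max_{i\ \mathrm{odd}} u_i \gg_\varepsilon D^{1/12-\varepsilon}$ for almost all squarefree $D$ (the exponents $1/12$ and $1/24=\tfrac12\cdot\tfrac1{12}$ match $U/2$ and $\sqrt U/2$ exactly), and the case of odd period length is immediate since then $U=\sqrt D$. Your observations about the palindromic symmetry preserving index parity when $s$ is even, and about which set $E(X)$ must be shown to be $o(X)$, are also accurate.

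The gap is that everything after this reduction --- which is the entire content of the theorem beyond the already known Theorem~\ref{theoremKT} --- is not proved. Your second and third paragraphs list candidate tools (the $(P_i,Q_i)$ recursion, moments of $L(1,\chi_D)$, sieve bounds for $p^2-Dq^2=Q$) and then explicitly defer the decisive step to \cite{KYZ}; ``the target is a parametrization\dots'' and ``calibrating these inputs\dots is where the work would reside'' is a statement of intent, not an argument. Worse, the obstruction you half-acknowledge is real and is not a matter of calibration: a large odd-indexed partial quotient corresponds to a primitive solution of $Dy^2-x^2=N$ with $N\le 2\sqrt D/M$, and a first-moment count of squarefree $D\le X$ admitting such a representation is of order $\sqrt X\cdot(\sqrt X/M)\cdot\log X$, which only covers a positive proportion of $D$ when $M$ is bounded. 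So the naive ``dense system of Pell-like equations with small right-hand side'' cannot by itself reach $M=X^{1/12-\varepsilon}$; one needs a genuinely different mechanism (in effect, showing that the complementary event --- \emph{every} odd-indexed partial quotient small --- forces $D$ into a set that can be parametrized sparsely), and no such mechanism is supplied. As written, the proposal establishes only what was already known from Theorem~\ref{theoremKT} and correctly poses, but does not solve, the analytic problem.
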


By ``almost all'' we mean that the set of such $D$ has (natural) density 1 among the set of all squarefree $D>0$.

\

	Finally, let's mention an open problem. Thanks to Chan--Oh~\cite{CO}, we know that there exist analogues of the 15- and 290-Theorems over any number field. However, the corresponding bounds are explicitly known only for classical forms over $ \Q(\sqrt{5}) $~\cite{Le}, and determining them more generally seems to be quite hard.

	\section{General Results}\label{sec:6}

	Let's now turn to fields of higher degree, and to several possible approaches for studying universal forms over them. Throughout this section, $ K $ thus denotes a~totally real number field of degree $ d = [K:\Q] $.
	\subsection*{Using units}
	Let $ (L, Q) $ be a~classical universal quadratic lattice. We saw in Proposition~\ref{propSplit} that each unit splits off, and in particular $ L = \langle 1 \rangle \perp L' $ for some lattice $ L' \subset L $. Now any square of a~unit is represented by $ \langle 1 \rangle $, so it need not be represented by $ L' $. But if $ \varepsilon \in \cO_K^{\times, +} $ is a~unit which is not a~square, then $ L' $ must represent $ \varepsilon $ and hence $ L = \langle 1, \varepsilon \rangle \perp L'' $ for some lattice $ L'' \subset L $. Continuing like this leads to the following observation: \textit{The rank of a~classical universal lattice is always greater than or equal to $ \# \cO_K^{\times, +}/ \cO_K^{\times 2}$.}

	Since $ K $ is totally real, there are $ d-1 $ fundamental units $ \varepsilon_1, \varepsilon_2, \dots, \varepsilon_{d-1} $, which implies $ \cO_K^{\times, +} \simeq \Z^{d-1} $. We can distinguish the two extreme cases:
	
\begin{itemize}
		\item No fundamental unit is totally positive. Then each totally positive unit is a~square, i.e., $ \cO_K^{\times, +} = \cO_K^{\times 2} $.
		\item All fundamental units are totally positive. Then $ \cO_K^{\times 2} \simeq (2\Z)^{d-1} $ and $\# \cO_K^{\times, +}/ \cO_K^{\times 2} = 2^{d-1}.$
\end{itemize}

	In the general situation when $ k $ fundamental units are totally positive, the rank of a~classical universal lattice is $ \geq 2^k $. If $ k \geq 2 $, this proves a~special case of Kitaoka's conjecture, i.e., that $ K $ has no ternary classical form. Not much is thus missing to prove the full conjecture, it would suffice to show the existence of a~few indecomposables!

	As we already mentioned, recall that Hilbert's reciprocity law implies a~theorem of Earnest--Khosravani~\cite{EK1}: If $ d $ is odd, then there is no ternary universal lattice (for local reasons).

	\subsection*{Bounds on indecomposables}
	It is not hard to show a~general upper bound on the norm of an indecomposable (although surprisingly, this bound was discovered only very recently, even though this question is, e.g., formulated as~\cite[Problem 53]{Nar}).

\begin{theorem}[{\cite[Theorem 5]
{KY3}}]\label{th 6.1}
		\label{theoremKY}
		Each indecomposable has norm $ \leq \disc_{K/\Q} $. In fact, if $N(\alpha)> \disc_{K/\Q} $, then $\alpha\succ\beta^2$ for some $\beta\in \cO_K$.
	
\end{theorem}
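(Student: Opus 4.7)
My plan is to establish the second (``in fact'') statement, from which the first assertion follows at once: if $\alpha$ is indecomposable but $N(\alpha) > \disc_{K/\Q}$, then producing a \emph{nonzero} $\beta \in \cO_K$ with $\alpha \succ \beta^2$ yields the forbidden decomposition $\alpha = \beta^2 + (\alpha - \beta^2)$, since $\beta \neq 0$ forces $\beta^2 \succ 0$ and the condition $\alpha \succ \beta^2$ is precisely $\alpha - \beta^2 \succ 0$.

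To produce such a $\beta$, I would apply Minkowski's convex body theorem via the Minkowski embedding. Writing $\sigma_1, \dots, \sigma_d$ for the real embeddings of $K$, the map $\iota\colon \cO_K \to \R^d$, $\gamma \mapsto (\sigma_1(\gamma), \dots, \sigma_d(\gamma))$, identifies $\cO_K$ with a full lattice of covolume $\sqrt{\disc_{K/\Q}}$. Given $\alpha \in \cO_K^+$, I would then consider the open, centrally symmetric, convex box
\[
B = \prod_{i=1}^d \left(-\sqrt{\sigma_i(\alpha)},\ \sqrt{\sigma_i(\alpha)}\right) \subset \R^d,
\]
whose volume equals $\prod_{i=1}^d 2\sqrt{\sigma_i(\alpha)} = 2^d \sqrt{N(\alpha)}$.

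By the hypothesis $N(\alpha) > \disc_{K/\Q}$, this volume strictly exceeds $2^d \sqrt{\disc_{K/\Q}}$, which is exactly $2^d$ times the covolume of $\iota(\cO_K)$. Minkowski's theorem then supplies a nonzero $\beta \in \cO_K$ with $\iota(\beta) \in B$, meaning $|\sigma_i(\beta)| < \sqrt{\sigma_i(\alpha)}$ and hence $\sigma_i(\beta^2) < \sigma_i(\alpha)$ for every $i$. This is precisely $\alpha \succ \beta^2$, and combining with the first paragraph gives the bound $N(\alpha) \leq \disc_{K/\Q}$ for indecomposables.

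The proof really has no serious obstacle; its entire content is the correct choice of convex body matching the lattice covolume, after which Minkowski's theorem does all the work. The only mild care needed is to invoke the open (strict) form of the convex body inequality, so that one actually gets strict inequalities $\sigma_i(\beta^2) < \sigma_i(\alpha)$ and thus genuine total positivity of $\alpha - \beta^2$; the strict hypothesis $N(\alpha) > \disc_{K/\Q}$ is precisely what enables this.
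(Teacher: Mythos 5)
Your proposal is correct and follows essentially the same route as the paper: both apply Minkowski's convex body theorem to the box with side lengths $2\sqrt{\sigma_i(\alpha)}$ in the Minkowski space, whose volume $2^d\sqrt{N(\alpha)}$ exceeds $2^d$ times the covolume $\sqrt{\disc_{K/\Q}}$ of $\cO_K$, to produce a nonzero $\beta$ with $\alpha\succ\beta^2$. The only cosmetic difference is that the paper shrinks the closed box by a small $\varepsilon$ whereas you invoke the strict form of Minkowski's theorem for the open box; both correctly yield the strict inequalities needed for $\alpha-\beta^2\succ 0$.
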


\begin{proof}
		Let's just sketch the proof. Let $ \alpha $ be an element of norm $ N\alpha > \disc_{K/\Q} $, and let $ \sigma_1(\alpha), \dots, \sigma_d(\alpha) $ be its conjugates. By Minkowski's theorem, for a~sufficiently small $ \varepsilon > 0 $ the box
\[
		\left\{x \in \R^d : |x_i| \leq \sqrt{\sigma_i(\alpha)}-\varepsilon, i = 1, \dots, d\right\}
\]
		 in the Minkowski space contains a~non-zero element $ \beta \in \cO_K $. Then $ \alpha \succ \beta^2 $, and $ \alpha $ is therefore decomposable.
\end{proof}

	Before proceeding further, note that we have already seen~\cite{Si2} that typically not all totally positive integers are sums of squares, but we can ask: What is the smallest integer $P$ such that \textit{if an element is the sum of squares, then it is the sum of at most $P$ squares}? This integer $P$ is called the \textit{Pythagoras number} of the ring $ \cO_K$ and is known to be always finite, but can be arbitrarily large~\cite{Sc2} (cf. also~\cite{Po}).
	However, there is an upper bound for Pythagoras numbers of orders in number fields that depends only on the degree of the number field~\cite[Corollary 3.3]{KY2}.

	In the case of real quadratic number fields $K=\Q(\sqrt D)$ the Pythagoras number is always $\leq 5$, and this bound is sharp~\cite{Pe}. In fact, one can show that $\mathrm{P}(\cO_K)=3$ for $D=2,3,5$~\cite{Coh,Sc1} and determine all $D$ for which $\mathrm{P}(\cO_K)=4$ (as in~\cite{CP}). For some further recent results, see~\cite{KY1,Kr,KRS,Ras,Ti2}.

	Thanks to Theorem~\ref{th 6.1} above, if we have an element of large norm, we can successively subtract squares from it until we are left with something of norm $ \leq \disc_{K/\Q} $. If we then rewrite the sum of squares as the sum of $P$ squares, we obtain the following result:

\begin{corollary}[{\cite[Theorem 6]
{KY3}}]\label{cor:6.2}
		The quadratic form
		
\[
		\sum \alpha x_\alpha^2 +y_1^2 +\cdots+y_P^2,
\]
		where we sum over all square classes of elements $ \alpha \in \cO_K^+ $ with norm $ N\alpha \leq \disc_{K/\Q} $, is universal and has rank $ \ll \disc_{K/\Q}\cdot (\log \disc_{K/\Q})^{d-1} $.
\end{corollary}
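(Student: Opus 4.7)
The proof splits into two independent parts: universality of the form $Q := \sum \alpha x_\alpha^2 + y_1^2+\cdots+y_P^2$, and the rank estimate. I plan to handle them separately.

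For universality, the plan is to iterate Theorem~\ref{th 6.1}. Given $\gamma \in \cO_K^+$, if $N\gamma \leq \disc_{K/\Q}$ then $\gamma$ lies in a square class appearing in the sum; choosing representatives carefully (e.g.\ squarefree ones of smallest norm in each class, so that divisibility works out), we have $\gamma = \alpha\cdot t^2$ for the designated representative $\alpha$ and some $t\in\cO_K$, so $\gamma$ is represented by $\alpha x_\alpha^2$. Otherwise, Theorem~\ref{th 6.1} supplies a nonzero $\beta_1 \in \cO_K$ with $\gamma \succ \beta_1^2$, so that $\gamma_1 := \gamma - \beta_1^2 \in \cO_K^+$. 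Iterating while the current norm exceeds $\disc_{K/\Q}$, the process will terminate because $\Tr\gamma_i \in \Z_{>0}$ strictly decreases at each step (as $\Tr\beta_i^2 \geq 1$ for $\beta_i \neq 0$). We will end up with
\[
\gamma = \beta_1^2 + \cdots + \beta_m^2 + \delta, \quad \delta \in \cO_K^+\cup\{0\},\; N\delta \leq \disc_{K/\Q}.
\]
The partial sum $\beta_1^2+\cdots+\beta_m^2$ is a sum of squares in $\cO_K$, hence by the definition of $P$ it equals a sum of at most $P$ squares and is therefore represented by $y_1^2+\cdots+y_P^2$; the residual $\delta$ is represented by its corresponding $\alpha x_\alpha^2$ term.

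For the rank, first note $P\ll_d 1$ by~\cite[Corollary 3.3]{KY2}, so it suffices to bound the number of square classes of $\alpha\in\cO_K^+$ with $N\alpha \leq X:=\disc_{K/\Q}$. The idea is that each such class determines a principal ideal $(\alpha)$ of norm $\leq X$, and within a fixed principal ideal with totally positive generators, distinct square classes correspond to cosets in $\cO_K^{\times,+}/(\cO_K^\times)^2$, a group of order at most $2^{d-1}$ (using $[\cO_K^\times:(\cO_K^\times)^2]=2^d$ for totally real $K$ and that $-1\notin\cO_K^{\times,+}$ halves this). Therefore
\[
\#\{\text{square classes}\}\;\leq\;2^{d-1}\cdot\#\{I\subset\cO_K: NI\leq X\}\;\ll_d\;\rho_K\cdot X,
\]
where $\rho_K$ is the residue of $\zeta_K(s)$ at $s=1$. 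I would then invoke Landau's classical bound $\rho_K\ll_d (\log\disc_{K/\Q})^{d-1}$ (equivalent to $h_KR_K \ll_d \sqrt{\disc_{K/\Q}}(\log\disc_{K/\Q})^{d-1}$), yielding the claimed rank estimate.

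The hard part will be the analytic input, namely the bound on $\rho_K$ (equivalently on $h_KR_K$); everything else is essentially elementary. A minor subtlety to verify is the identification of square classes inside a principal ideal with cosets of $(\cO_K^\times)^2$ in $\cO_K^{\times,+}$, which relies on the observation that a unit which is a square in $\cO_K$ is necessarily the square of a unit.
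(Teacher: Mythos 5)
Your argument is essentially the paper's own (very brief) sketch: iterate Theorem~\ref{th 6.1} to peel off squares until the norm drops to at most $\disc_{K/\Q}$, with termination controlled by the strictly decreasing positive trace, then absorb the accumulated squares into $y_1^2+\cdots+y_P^2$ via the Pythagoras number and represent the totally positive residue by its square-class term; the reduction of the rank count to ideals of norm $\leq \disc_{K/\Q}$ times the factor $\#\bigl(\cO_K^{\times,+}/(\cO_K^{\times})^2\bigr)\leq 2^{d-1}$ is also the standard one. The one step I would tighten is the analytic input: Landau's bound on the residue $\rho_K$ controls the ideal-counting function only asymptotically, and the error term in the ideal theorem is not uniform in $K$, so evaluating at the specific point $X=\disc_{K/\Q}$ needs justification; the cleaner uniform estimate is $\#\{I\subset\cO_K: NI\leq X\}\leq\sum_{n\leq X}\tau_d(n)\ll_d X(\log X)^{d-1}$, using that the number of ideals of norm exactly $n$ is at most the $d$-fold divisor function $\tau_d(n)$. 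With that substitution the proof is complete and matches the intended one.
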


	It is also sometimes useful to know that there is a~partial converse of Theorem~\ref{theoremKY}.
	
\begin{theorem}
		Assume that $ K $ is primitive, i.e., there is no field $ \Q \varsubsetneq F \varsubsetneq K $. If $ \alpha\in \cO_K^+ $ has norm $ N\alpha \leq \disc_{K/\Q}^{1/(d^2 -d)} $ and is not divisible by any $n\in\Z_{\geq 2}$, then it is indecomposable.
\end{theorem}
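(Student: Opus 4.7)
My plan is to prove the statement by contrapositive. Assuming $\alpha=\beta+\gamma$ is a nontrivial decomposition in $\cO_K^+$, I would deduce that either $\alpha$ is divisible by some $n\in\Z_{\geq 2}$ or else $N\alpha>\disc_{K/\Q}^{1/(d^2-d)}$. The auxiliary element to introduce is $\theta:=\beta/\alpha\in K$: because $0\prec\beta,\gamma\prec\alpha$, every conjugate satisfies $\sigma_i(\theta)=\sigma_i(\beta)/\sigma_i(\alpha)\in(0,1)$, and this interval constraint is what will ultimately force a lower bound on $N\alpha$ in the generic subcase.

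Primitivity enters through the intermediate field $F:=\Q(\theta)\subseteq K$, which by hypothesis must equal $\Q$ or $K$. If $F=\Q$, I would write $\theta=r/s$ in lowest terms; the constraint $\sigma_i(\theta)\in(0,1)$ then forces $0<r<s$, so $s\geq 2$. From $s\beta=r\alpha$ together with a B\'ezout identity $ur+vs=1$ one computes $\alpha=s(u\beta+v\alpha)$, so $\alpha/s\in\cO_K$, contradicting the non-divisibility hypothesis.

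If instead $F=K$, then $\theta$ is a primitive element, so $1,\theta,\ldots,\theta^{d-1}$ are $\Q$-linearly independent and $M:=\Z+\Z\theta+\cdots+\Z\theta^{d-1}$ is a full-rank $\Z$-sublattice of $K$. The key observation is $\alpha^{d-1}M\subseteq\cO_K$, since $\alpha^{d-1}\theta^j=\alpha^{d-1-j}\beta^j$ is integral for every $0\leq j\leq d-1$. Comparing discriminants of these two full-rank $\Z$-lattices and expanding via Vandermonde,
\[
\disc_{K/\Q}\;\leq\;\disc(\alpha^{d-1}M)\;=\;N(\alpha)^{2(d-1)}\prod_{i<j}\bigl(\sigma_i(\theta)-\sigma_j(\theta)\bigr)^{2},
\]
and each factor in the product is strictly less than $1$ by the $(0,1)$-bound on the conjugates of $\theta$. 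This forces $N(\alpha)^{2(d-1)}>\disc_{K/\Q}$; since $2(d-1)\leq d^2-d$ for $d\geq 2$ and $\disc_{K/\Q}\geq 1$, the stated bound $N(\alpha)>\disc_{K/\Q}^{1/(d^2-d)}$ follows.

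The main obstacle is conceptual rather than technical: everything hinges on recognising $\theta=\beta/\alpha$ as the right auxiliary element, whose conjugates land in $(0,1)$ precisely because of the additive decomposition. Once $\theta$ is in hand, primitivity splits the problem cleanly into a trivial rational subcase solved by B\'ezout and a single Vandermonde-driven discriminant estimate; the remaining verifications ($\alpha^{d-1}M\subseteq\cO_K$ and the index inequality for full-rank sublattices) are entirely routine.
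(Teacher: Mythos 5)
Your proof is correct, and in the main case it takes a genuinely different route from the paper. Both arguments use primitivity in the same way --- the field generated by the ratio of the two summands is either $\Q$ or $K$ --- and both dispose of the rational case by the same B\'ezout divisibility argument (which the paper's sketch leaves implicit; it is exactly where the hypothesis that $\alpha$ is not divisible by any $n\in\Z_{\geq 2}$ enters). The difference is in the generic case: the paper expands $N\alpha=\prod_i(\sigma_i\beta+\sigma_i\gamma)$, keeps the terms forming $\Tr(\mu)$ for the totally positive integer $\mu=N(\gamma)\beta/\gamma$, and invokes the Stieltjes--Schur bound $\Tr(\mu)\gg \disc_{K/\Q}^{1/(d^2-d)}$ for a totally positive generator of $K$. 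You instead compare the discriminant of the full-rank sublattice $\alpha^{d-1}\bigl(\Z+\Z\theta+\cdots+\Z\theta^{d-1}\bigr)\subseteq\cO_K$ with $\disc_{K/\Q}$ and use the Vandermonde factorization together with $\sigma_i(\theta)\in(0,1)$. Your route buys two things: it avoids the implied constant hidden in the paper's $\gg$, giving a clean strict inequality, and it yields the stronger exponent $N\alpha>\disc_{K/\Q}^{1/(2d-2)}$, which dominates $\disc_{K/\Q}^{1/(d^2-d)}$ because $(d-1)(d-2)\geq 0$ and $\disc_{K/\Q}\geq 1$ (the two exponents coincide at $d=2$). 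Morally the two proofs are cousins: Schur's trace bound is itself proved by a discriminant-versus-Vandermonde comparison, so you have in effect inlined and sharpened that step by exploiting that the conjugates of $\theta$ lie in a fixed bounded interval rather than merely being controlled by the trace.
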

	We again only sketch the proof. As usual, $ \sigma_1, \dots, \sigma_d $ denote the $ d $ embeddings of $ K $ into $ \R $. Suppose that $ \alpha = \beta+\gamma $ for some totally positive integers $ \beta $ and $ \gamma $. Then
\[
	N\alpha = (\sigma_1\beta+\sigma_1\gamma)(\sigma_2\beta+\sigma_2\gamma)\cdots(\sigma_d\beta+\sigma_d\gamma) \geq \Tr(\sigma_1\beta\cdot\sigma_2\gamma\cdot\sigma_3\gamma\cdots\sigma_d\gamma) \gg \disc_{K/\Q}^{1/(d^2 -d)}
\]
	by the Stieltjes--Schur Theorem~\cite[\S 3]{Sch}: If $ \mu \succ 0 $ and $ K = \Q(\mu) $, then $ \Tr(\mu) \gg \disc_{K/\Q}^{1/(d^2 -d)} $, cf.~\cite[Proposition 2]{Ka4}.

	\subsection*{Elements of trace 1}

	In Section~\ref{sec:4} we saw lower bounds for ranks of universal quadratic lattices in terms of elements of trace 1 in the codifferent. Exactly the same result holds in general.

\begin{theorem}[{\cite{Ya},~\cite[Section 7.1]
{KT}}]		Assume that there are $ \beta_1, \dots, \beta_u \in \cO_K^+ $, $ \delta \in \cO_K^{\vee, +} $ such that $ \Tr(\beta_i\delta) = 1 $ for all $ i $. Then
\begin{align*}
			m(K) \geq \frac{u}{d},\qquad m_{class}(K) \geq \frac{\sqrt{u}}{d}.
\end{align*}
	
\end{theorem}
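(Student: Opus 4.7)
The plan is to transplant the trace-form argument from the proof of Theorem~\ref{theoremKT} to general degree $d$. The initial observation is that each $\beta_i$ is indecomposable: if $\beta_i=\gamma+\eta$ with $\gamma,\eta\in\cO_K^+$, then $\delta\gamma$ and $\delta\eta$ are totally positive algebraic integers, so their traces are positive integers, yielding $1=\Tr(\delta\beta_i)=\Tr(\delta\gamma)+\Tr(\delta\eta)\geq 2$, a contradiction. Hence any universal $\cO_K$-lattice must represent each $\beta_i$ by some nonzero vector.

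Fix an integral totally positive definite universal $\cO_K$-lattice $(L,Q)$ of rank $r$, choose $v_i\in L$ with $Q(v_i)=\beta_i$ (we may assume the $\beta_i$ are pairwise distinct, so the $\pm v_i$ are $2u$ distinct vectors), and regard $L$ as a $\Z$-lattice of rank $rd$. Equip it with the rational quadratic form $\widetilde Q(v):=\Tr(\delta Q(v))$. This is integer-valued (because $\delta\in\cO_K^\vee$ and $Q(v)\in\cO_K$), positive definite (because $\delta\succ 0$ and $Q$ is totally positive definite), and every nonzero $v$ satisfies $\widetilde Q(v)\geq 1$ (the trace of a totally positive algebraic integer is at least $1$). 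For the specific vectors $v_i$ we have $\widetilde Q(v_i)=\Tr(\delta\beta_i)=1$, exhibiting the $\pm v_i$ as a set of $2u$ minimum vectors of $(L,\widetilde Q)$.

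The two inequalities now arise by comparing $2u$ to standard ceilings on the number of minimum vectors of a positive definite $\Z$-lattice of rank $n=rd$. If $Q$ is classical then $\Tr(\delta B_Q(\cdot,\cdot))$ is $\Z$-valued, so $\widetilde Q$ is classical; iterating Proposition~\ref{pr:split} along length-one vectors splits off a maximal $\langle 1\rangle^k$-summand, after which no further length-one vectors remain, giving a ceiling of $2n$. Hence $2u\leq 2rd$, i.e., $r\geq u/d$, yielding the linear bound in the theorem. If $Q$ is only integral (possibly non-classical), $\widetilde Q$ need not be classical, and one invokes instead the standard bound $\max(n^2,240)$ on minimum vectors of an arbitrary positive definite $\Z$-lattice. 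This gives $2u\leq (rd)^2$ (once $u$ is moderately large) and hence $r\geq\sqrt{2u}/d\geq\sqrt u/d$, yielding the square-root bound in the theorem.

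The principal technical ingredient is the quadratic ceiling on minimum vectors in non-classical $\Z$-lattices; this is the source of the weaker square-root bound and, as in Theorem~\ref{theoremKT}, it rests on the classification of root systems of bounded rank. The remainder of the argument---constructing $\widetilde Q$, verifying integrality and positivity via the codifferent, and invoking the classical splitting bound---is essentially bookkeeping based on the defining property $\Tr(\delta\cO_K)\subset\Z$ of the codifferent.
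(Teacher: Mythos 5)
Your proposal is correct and is essentially the paper's own argument: it is the verbatim generalization of the trace-form proof of Theorem~\ref{theoremKT} (indecomposability via $\Tr(\delta\cdot)=1$, restriction of scalars to a rank-$rd$ $\Z$-lattice with form $\Tr(\delta Q)$, then the $2n$ bound on unit vectors via Proposition~\ref{pr:split} in the classical case and the $\max(n^2,240)$ bound in the non-classical case), which is exactly what the paper means by ``the same result holds in general.'' Note only that your conclusions read $m_{class}(K)\geq u/d$ and $m(K)\geq\sqrt{u}/d$, i.e., with the two bounds swapped relative to the displayed statement; your assignment is the one consistent with Theorem~\ref{theoremKT} and the Summary (the classical case gets the linear bound), so the display in the statement appears to contain a typo and you have proved the intended version.
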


	How to find such elements?
	There is no general way (after all, there may be no totally positive elements in the codifferent that have trace 1), so one may have to rely on explicit constructions (such as in the proof of Theorem~\ref{theoremKT} or~\ref{theoremKT2}).
	However, let's also briefly discuss a~method based on interlacing polynomials and another one which uses the Dedekind zeta function.

\begin{theorem}[{\cite[Theorem 4]
{Ya}}]		Let $ d $ and $ r $ be positive integers. There are only finitely many totally real number fields $ K $ of degree $ d $ such that
		
\begin{itemize}
			\item $ K $ is primitive (it has no proper subfields),
			\item $ K $ is monogenic ($ \cO_K = \Z[\alpha] $ for some algebraic integer $ \alpha $),
			\item $ K $ has units of all signatures (equivalently, $ \cO_K^{\times, +} = \cO_K^{\times 2} $),
			\item $ K $ has a~universal lattice of rank $ r $.
\end{itemize}
	
\end{theorem}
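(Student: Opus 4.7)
The plan is to combine the preceding theorem --- which gives $m(K)\ge u/d$ whenever there exist $\beta_1,\ldots,\beta_u\in\cO_K^+$ and $\delta\in\cO_K^{\vee,+}$ with $\Tr(\beta_i\delta)=1$ --- with a~construction of many such trace pairs using interlacing polynomials, made possible by monogenicity. A universal lattice of rank $r$ will then force $u\le rd$; if one can show that $u$ must grow with $\disc_{K/\Q}$ under the four hypotheses, then $\disc_{K/\Q}$ is bounded in terms of $d$ and $r$, and Hermite's theorem on fields of bounded discriminant and degree yields finiteness.

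Using monogenicity, I would fix $\alpha$ with $\cO_K=\Z[\alpha]$ and minimal polynomial $f\in\Z[x]$; then $\cO_K^\vee=\frac{1}{f'(\alpha)}\cO_K$, and the classical partial-fraction computation gives $\Tr(\alpha^k/f'(\alpha))=0$ for $0\le k\le d-2$ and $\Tr(\alpha^{d-1}/f'(\alpha))=1$. In particular, any monic $g\in\Z[x]$ of degree $d-1$ yields an element $g(\alpha)/f'(\alpha)\in\cO_K^\vee$ of trace~$1$. The interlacing-polynomial construction is to pick integers $n_j\in(\alpha_j,\alpha_{j+1})$, where $\alpha_1<\cdots<\alpha_d$ are the real conjugates of $\alpha$, and take $g(x)=\prod_j(x-n_j)$; then $g(\alpha_i)$ and $f'(\alpha_i)$ share the same alternating sign pattern, so $g(\alpha)/f'(\alpha)$ lies in $\cO_K^{\vee,+}$. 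Such tuples are abundant when the consecutive root-gaps $\alpha_{j+1}-\alpha_j$ are wide; primitivity of $K$ enters here to prevent the conjugates from clustering according to a~subfield pattern, so that the gaps genuinely grow with $\disc_{K/\Q}$.

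To match the hypothesis of the preceding theorem one must rewrite many trace-$1$ codifferent elements $\delta^{(k)}=g_k(\alpha)/f'(\alpha)$ as $\beta_k\delta$ for a~single $\delta\in\cO_K^{\vee,+}$ and $\beta_k\in\cO_K^+$. Fixing $\delta=g_0(\alpha)/f'(\alpha)$ for one interlacing $g_0$, each ratio $\delta^{(k)}/\delta=g_k(\alpha)/g_0(\alpha)$ is a~totally positive element of $K$, and the units-of-all-signatures hypothesis $\cO_K^{\times,+}=\cO_K^{\times 2}$ provides the flexibility to clear denominators and adjust by totally positive (hence square) units to produce honest integers $\beta_k\in\cO_K^+$ with $\Tr(\beta_k\delta)=1$. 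Applying $u\le rd$ then caps the number of viable interlacing tuples, bounding the height of $\alpha$, and so $\disc_{K/\Q}$.

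The hard part I anticipate is precisely this rearrangement: making the passage from the abundant trace-$1$ codifferent elements $g(\alpha)/f'(\alpha)$ to honest trace-$1$ pairings $\Tr(\beta_k\delta)=1$ with a~\emph{single} $\delta$ work for a~number $u$ that genuinely grows with $\disc_{K/\Q}$. Monogenicity furnishes the polynomial description of $\cO_K^\vee$, primitivity guarantees the root-spread grows with the discriminant, and units-of-all-signatures permits the integrality-preserving adjustments; dropping any of these hypotheses appears to break the construction, consistent with all four appearing in the theorem statement.
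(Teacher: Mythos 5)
Your overall route matches the one behind the paper's sketch: use the preceding trace-form bound ($m(K)\geq u/d$), identify $\{\gamma\in\cO_K^{\vee,+}\mid \Tr\gamma=1\}$ with monic integer polynomials of degree $d-1$ interlacing $f$ via $\gamma=g(\alpha)/f'(\alpha)$ and Euler's formulas, and show this set is large for all but finitely many fields. However, the step you yourself flag as the hard part is genuinely broken as proposed. You fix $\delta=g_0(\alpha)/f'(\alpha)$ for one interlacing $g_0$ and hope to convert the other trace-one elements $g_k(\alpha)/f'(\alpha)$ into $\beta_k\delta$ with $\beta_k\in\cO_K^+$ by ``adjusting by totally positive units.'' This cannot work: the ratio $g_k(\alpha)/g_0(\alpha)$ is in general not an algebraic integer (since $g_0(\alpha)$ is not a unit, $\delta$ does not generate the ideal $\cO_K^\vee$), multiplying a non-integer by a unit never makes it integral, and replacing $\beta_k$ by $\varepsilon^2\beta_k$ with $\varepsilon^2\neq 1$ would destroy the equality $\Tr(\beta_k\delta)=1$ anyway. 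The correct use of the units-of-all-signatures hypothesis is different: monogenicity gives $\cO_K^\vee=(f'(\alpha))^{-1}$, a \emph{principal} fractional ideal, and units of all signatures let you replace the generator $1/f'(\alpha)$ by $\delta=\varepsilon/f'(\alpha)$ for a unit $\varepsilon$ chosen so that $\delta\succ 0$. With $\delta$ a totally positive generator of $\cO_K^\vee$, every $\gamma\in\cO_K^{\vee,+}$ with $\Tr\gamma=1$ is automatically $\gamma=\beta\delta$ with $\beta=\gamma/\delta\in\cO_K$ and $\beta\succ 0$, so $u$ equals the number of interlacing polynomials with no rearrangement needed. Your version instead spends the signature hypothesis on the (correct but not decisive) observation that totally positive units are squares.

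A secondary gap: the assertion that primitivity forces the root gaps $\alpha_{j+1}-\alpha_j$ ``to grow with $\disc_{K/\Q}$'' is not justified and is not quite the right statement. What is needed is that only finitely many primitive fields of degree $d$ admit a generator $\alpha$ of $\cO_K$ whose interlacing count is at most $rd$; this requires an actual lemma (one must rule out, e.g., a single huge gap with all other gaps shorter than $1$, and one must handle non-split interlacing polynomials, not only those of the form $\prod_j(x-n_j)$ with $n_j\in\Z$). Counting only split interlacing polynomials, as you do, can give $u=0$ even for large discriminant. So the skeleton is right and agrees with the paper, but the two places where the hypotheses ``units of all signatures'' and ``primitive'' actually do their work are exactly the places where your argument does not yet close.
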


	The proof uses interlacing polynomials. Let
\[
	f(x) = (x-\alpha_1)(x-\alpha_2)\cdots(x-\alpha_d)
\]
	be the minimal polynomial for $ \alpha $, and assume that $ \alpha_1 < \alpha_2 < \dots < \alpha_d $. We say that a~polynomial
	
\[
	g(x) = (x-\beta_1)(x-\beta_2)\cdots(x-\beta_{d-1})
\]
	\emph{interlaces} $ f $ is	
\[
	\alpha_1 < \beta_1 < \alpha_2 < \beta_2 < \dots < \beta_{d-1} < \alpha_d.
\]
	The key fact is that there is a~bijection between such polynomials $ g $ and the set 
 \[
     \{\gamma\in\cO_K^{\vee, +}\mid \Tr\gamma = 1\}. \]
     
	The \emph{Dedekind zeta function} is defined as
\[
	\zeta_K(s) = \sum_{A < \cO_K} \frac{1}{(N A)^s},\qquad \Re s > 1.
\]
	The series converges absolutely for $ \Re s > 1 $ and $ \zeta_K $ has a~meromorphic continuation to the entire complex plane with a~simple pole at $ s = 1 $. It satisfies a~functional equation which relates $ \zeta_K(s) $ to $ \zeta_K(1-s) $.

	For us, the important important fact is that Siegel related the value $ \zeta_K(-1) $ to elements of small trace~\cite{Si1},~\cite[\S 1]{Za}:

\begin{theorem}[Siegel's formula for $ \zeta_K(-1) $ and functional equation]\label{theoremSiegelsFormula}
		Assume that $ K $ is a~totally real field of degree $ d=2, 3, 5, 7 $. Then
\[
		\sum_{\substack{\alpha \in \cO_K^{\vee, +}\\\Tr\alpha=1}}\sigma\left((\alpha)(\cO_K^\vee)^{-1}\right) = \frac{1}{b_d}\left|\disc_{K/\Q}\right|^{3/2}\left(\frac{-1}{4\pi}\right)^d \zeta_K(2)
\]
		for a~suitable $ b_d \in \Q $ (e.g., $ b_2 = \frac{1}{240} $, $ b_3 = -\frac{1}{504} $, \dots). Here		
\[
		\sigma(B) = \sum_{A \mid B}N(A).
\]
	
\end{theorem}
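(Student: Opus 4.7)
The plan is to follow Siegel's original approach via Hilbert modular forms, as exposed in~\cite{Si1,Za}. The key object is the parallel-weight-$2$ Hilbert--Eisenstein series $E_2(z_1, \dots, z_d)$ for $SL_2(\cO_K)$ acting on the product of $d$ upper half-planes. Parallel weight~$2$ is the borderline case where absolute convergence fails, so one introduces $E_2$ via Hecke's regularization trick; the result is a~holomorphic Hilbert modular form of parallel weight~$2$ on $K$.

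The first substantive step is a~Poisson-summation computation of the Fourier expansion, which yields
$$E_2(z_1, \dots, z_d) = \kappa_0\, \zeta_K(-1) + \kappa_1 \sum_{\alpha \in \cO_K^{\vee, +}} \sigma\bigl((\alpha)(\cO_K^\vee)^{-1}\bigr)\, e^{2\pi i \Tr(\alpha z)},$$
where $\kappa_0, \kappa_1$ are explicit constants involving powers of $\pi$ and $|\disc_{K/\Q}|^{1/2}$. The non-constant Fourier coefficients are already the divisor sums we want, now grouped by their trace.

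Next, restrict to the diagonal $z_1 = \dots = z_d = z$. The pullback of a~parallel-weight-$k$ Hilbert modular form is a~classical elliptic modular form of weight $kd$, so $f(z) := E_2(z, \dots, z) \in M_{2d}(SL_2(\Z))$, and its $q$-expansion (with $q = e^{2\pi i z}$) has $q^n$-coefficient equal to $\kappa_1 \sum_{\Tr\alpha = n} \sigma((\alpha)(\cO_K^\vee)^{-1})$. Precisely for $d \in \{2, 3, 5, 7\}$ (so that $2d \in \{4, 6, 10, 14\}$) the space $M_{2d}(SL_2(\Z))$ is one-dimensional, spanned by the classical Eisenstein series $E_{2d}$ whose coefficients are well-known rational multiples of $\sigma_{2d-1}(n)$. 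Hence $f(z)$ is forced to be a~scalar multiple of $E_{2d}$, and matching the $q^0$ and $q^1$ coefficients produces the claimed identity between $\zeta_K(-1)$ and the trace-$1$ Fourier block. The reciprocal of $[q^1]E_{2d}/[q^0]E_{2d}$ supplies the rational constant $b_d$, giving $b_2 = 1/240$ from $E_4 = 1 + 240 \sum \sigma_3(n) q^n$ and $b_3 = -1/504$ from $E_6 = 1 - 504 \sum \sigma_5(n) q^n$. Finally, the functional equation of $\zeta_K$ converts the $\zeta_K(-1)$ coming from the constant term into $\zeta_K(2)$ and produces the factor $(-1/(4\pi))^d |\disc_{K/\Q}|^{3/2}$.

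The main difficulty is not any single conceptual step but the careful tracking of constants through (i) Hecke's regularization of $E_2$, (ii) the Poisson-summation Fourier expansion, (iii) the diagonal restriction, and (iv) the functional equation of $\zeta_K$, so that everything collapses into the single rational $b_d$. The restriction to $d \in \{2, 3, 5, 7\}$ is essential: it is exactly the range where $M_{2d}(SL_2(\Z))$ is spanned by a~single Eisenstein series. From $d = 6$ onward the weight-$12$ cusp form $\Delta$ contributes an independent direction, so $f(z)$ ceases to be pinned down by its constant term alone and this clean argument breaks.
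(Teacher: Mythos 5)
The paper offers no proof of this theorem: it is quoted as a known result of Siegel, with references to \cite{Si1} and \cite[\S 1]{Za}, so there is no internal argument to compare yours against. Your sketch is the standard proof from those sources and its skeleton is sound: the parallel-weight-$2$ Hilbert Eisenstein series (genuinely holomorphic for $d\geq 2$ once Hecke's trick is applied), its Fourier expansion with constant term proportional to $\zeta_K(-1)$ and coefficients $\sigma\bigl((\alpha)(\cO_K^\vee)^{-1}\bigr)$ indexed by $\alpha\in\cO_K^{\vee,+}$, diagonal restriction into $M_{2d}(SL_2(\Z))$, comparison of the $q^0$ and $q^1$ coefficients in a one-dimensional space, and the functional equation. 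Two small corrections are in order. First, $M_8(SL_2(\Z))$ is also one-dimensional, spanned by $E_4^2=1+480\sum\sigma_7(n)q^n$, so your mechanism applies verbatim to $d=4$ (with $b_4=1/480$); this is consistent with the list $d=2,3,4,5,7$ in Theorem~\ref{th:lift}, and your claim that $\{2,3,5,7\}$ is \emph{precisely} the set of degrees with $\dim M_{2d}=1$ is not right --- the first genuinely excluded degree is $d=6$, where $\Delta$ enters. Second, the functional equation for a totally real field gives $\zeta_K(-1)=\left|\disc_{K/\Q}\right|^{3/2}\left(\frac{-1}{2\pi^2}\right)^d\zeta_K(2)$ (check it on $K=\Q$: $-\frac{1}{12}=\frac{-1}{2\pi^2}\cdot\frac{\pi^2}{6}$), not a factor $\left(\frac{-1}{4\pi}\right)^d$; since $(\pi/2)^d$ is irrational, the discrepancy cannot be absorbed into the rational $b_d$, so this is a point where your constant-tracking (and arguably the display in the survey itself) needs care.
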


A similar formula holds in each degree $d$, but as the degree grows, it will involve elements of large traces (roughly, of traces up to $d/6$).

\medskip

As a~sample application, let's mention the following result on the \textit{lifting problem} (that will discussed in detail in Section~\ref{sec:8}).

\begin{theorem}[{\cite[Theorem 1.2]
{KY2}}]\label{th:lift}
		If $ K $ is a~totally real number field of degree $ d =2, 3, 4, 5, 7 $ which has
		
\begin{itemize}
			\item principal codifferent ideal, and
			\item a~universal quadratic form with coefficients in $ \Z $,
		
\end{itemize}
		then $ K = \Q(\sqrt{5}) $ or $ K = \Q(\zeta_7+\zeta_7^{-1}) $, where $ \zeta_7 = e^{2\pi i/7} $. The form $ x^2 +y^2 +z^2 $ is universal over $ \Q(\sqrt{5}) $, and $ x^2 +y^2 +z^2 +w^2 +xy+xz+xw $ is universal over $ \Q(\zeta_7+\zeta_7^{-1}) $.
\end{theorem}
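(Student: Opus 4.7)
The plan is to exploit the principal codifferent together with Siegel's formula to force both an abundance of indecomposables that $Q$ must represent and a tensor-product structure on the associated trace lattice, so severely constraining $Q$ that only two fields survive. Fix a totally positive generator $\delta$ of the different, so $\cO_K^\vee = \delta^{-1}\cO_K$. Suppose $Q$ is a rank-$r$ universal $\cO_K$-form with $\Z$-coefficients on a lattice $L$, and enumerate $T := \{\gamma \in \cO_K^{\vee,+} : \Tr\gamma = 1\}$; writing $\gamma = \delta^{-1}\beta$, each $\beta$ lies in $\cO_K^+$ and is indecomposable by the trace-$1$ observation recalled just before the proof of Theorem~\ref{theoremKT}. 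Universality forces $v_\gamma \in L$ with $Q(v_\gamma) = \beta$, so the $\Z$-lattice $(L, Q_\delta)$ defined by $Q_\delta(x) := \Tr(\delta^{-1}Q(x))$ -- of rank $rd$, and classical whenever $Q$ is -- contains the $\#T$ distinct length-$1$ vectors $v_\gamma$.

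Next I would bound $\#T$ from below using Siegel's formula (Theorem~\ref{theoremSiegelsFormula}): in degrees $d \in \{2,3,5,7\}$ it yields $\#T \gg_d |\disc_{K/\Q}|^{3/2}\zeta_K(2)$, and a closely related formula involving codifferent elements of trace $\leq 2$ handles $d = 4$. Applying Proposition~\ref{pr:split} repeatedly to $(L, Q_\delta)$, as in Step~2 of the proof of Theorem~\ref{theoremKT}, forces $rd \geq \#T$ in the classical case; in general the bound $\max(n^2, 240)$ on length-$1$ vectors in a non-classical rank-$n$ $\Z$-lattice gives $r^2 d^2 \gg \#T$. Hence the rank $r$ must grow polynomially in $|\disc_{K/\Q}|$.

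The extra ingredient specific to the lifting problem is the $\Z$-coefficient hypothesis, which forces $(L, Q_\delta)$ to split as the $\Z$-tensor product $(\Z^r, Q) \otimes_\Z (\cO_K, \alpha \mapsto \Tr(\delta^{-1}\alpha^2))$. The second factor is a classical positive definite $\Z$-lattice of rank $d$ with Gram determinant $N(\delta)^{-1}\disc_{K/\Q} = 1$ (since $N(\delta) = \disc_{K/\Q}$), hence unimodular, and for $d \leq 7$ is therefore isometric to $\langle 1, \dots, 1 \rangle$. Thus $Q_\delta \cong Q^{\perp d}$ and must represent the set $\{\Tr(\delta^{-1}\alpha) : \alpha \succ 0\}$, which includes all sufficiently large positive integers. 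Combined with the indecomposable-norm bound Theorem~\ref{th 6.1} and the constraints imposed on $Q$ by having its $d$-fold orthogonal sum cover this set, this can be turned into an explicit upper bound on $|\disc_{K/\Q}|$ depending only on $d$, reducing the problem to finitely many candidate fields in each of the five admissible degrees.

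Finally, for each surviving candidate $K$ I would either refine the count of trace-$1$ indecomposables until it exceeds what any $\Z$-form of the permitted rank can support, or verify universality of an explicit candidate form by local methods; the two survivors are $\Q(\sqrt 5)$ (with $x^2 + y^2 + z^2$ universal by Maass~\cite{Ma}) and $\Q(\zeta_7 + \zeta_7^{-1})$ (with $x^2 + y^2 + z^2 + w^2 + xy + xz + xw$ universal). I expect the main obstacle to be extracting a workable upper bound on $|\disc_{K/\Q}|$ from the tensor-product structure of the third step -- in particular in degree $d = 4$, where Siegel's formula is more delicate, and in degree $d = 2$, where a fair number of small-discriminant real quadratic fields must be eliminated individually, typically by exhibiting a totally positive integer $\alpha$ for which no representation $Q(x) = \alpha$ is compatible with the tensor-product description of $(L, Q_\delta)$ modulo a small prime.
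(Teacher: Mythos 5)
The survey does not actually reproduce a proof of this theorem (it is quoted from~\cite{KY2}), but the machinery it assembles in Section~\ref{sec:8} --- condition~(\ref{eqConditionA}), minimal vectors of $\Tr(\delta Q)$, the identification $(\cO_K^r,\Tr(\delta Q))\cong(\cO_K\otimes\Z^r,\,T_\delta\otimes Q)$, and Kitaoka's E-type theorem --- is precisely what your outline is missing, and without it the argument does not close. The first concrete gap is logical: your second paragraph derives a lower bound on the rank $r$ of $Q$ growing with $|\disc_{K/\Q}|$ and treats this as progress, but the hypothesis places no upper bound on $r$ whatsoever --- the theorem must exclude universal $\Z$-forms of \emph{every} rank, so a rank lower bound, however strong, yields no contradiction. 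Relatedly, you read Siegel's formula in the wrong direction: since each summand $\sigma\bigl((\alpha)(\cO_K^\vee)^{-1}\bigr)\geq 1$, the formula as stated gives an \emph{upper} bound on $\#T$, not the lower bound $\#T\gg_d|\disc_{K/\Q}|^{3/2}\zeta_K(2)$ you assert; it becomes an exact count of $\#T$ only after one knows that every trace-one element of $\cO_K^{\vee,+}$ generates the codifferent, i.e.\ equals $\delta^{-1}$ times a unit.

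That last fact is the heart of the actual proof and is absent from your proposal: one must show that every $\beta\in\cO_K^+$ with $\Tr(\delta^{-1}\beta)=1$ (more generally, satisfying condition~(\ref{eqConditionA})) is the square of a unit. This is exactly what the $\Z$-coefficient hypothesis buys, via the route the survey sketches: the factor $T_{\delta^{-1}}$ has rank $d\leq 43$, so by Kitaoka's theorem the tensor product is of E-type and its minimal vectors are \emph{split}, $v\otimes w$, which forces the represented minimal-trace element to be a square and then a unit. Your observation that $T_{\delta^{-1}}$ is unimodular and hence isometric to $\langle 1,\dots,1\rangle$ for $d\leq 7$ is correct and relevant, but an abstract isometry $Q_\delta\cong Q^{\perp d}$ carries no information about \emph{which} vectors represent a given $\alpha\in\cO_K^+$, so your claim that the $d$-fold orthogonal sum ``covering'' a set of traces can be turned into a bound on $|\disc_{K/\Q}|$ is unsupported; no mechanism is given. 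The correct endgame is: all trace-one elements are $\delta^{-1}\varepsilon^2$, so Siegel's formula counts them exactly as $\asymp_d|\disc_{K/\Q}|^{3/2}\zeta_K(2)$, and comparing with a discriminant-free upper bound on the number of units $\varepsilon$ with $\Tr(\delta^{-1}\varepsilon^2)=1$ bounds $|\disc_{K/\Q}|$ explicitly, after which the finitely many candidate fields are checked (your last paragraph, including the verification over $\Q(\sqrt5)$ and $\Q(\zeta_7+\zeta_7^{-1})$, is fine in spirit). As written, though, the two central steps --- why trace-one elements are unit squares, and why that contradicts anything --- are missing.
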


	\subsection*{Large ranks} Let's also summarize here the known results on the existence of number fields with large minimal rank $m(K)$. For quadratic fields, we have already seen this in Section~\ref{sec:5}, and in the cubic case, this is originally due to Yatsyna~\cite[Theorem 5]{Ya}, and we will establish this in Section~\ref{sec:7} below.

	A natural idea for extending these results to higher degrees is to start with a~field $K$ with large $m(K)$ and to consider overfields $L\supset K$. This was first carried out for multiquadratic fields~\cite{KS}, and then extended to all fields of degrees divisible by 2 and 3~\cite{Ka4}. Finally, Doležálek~\cite{Do} generalized this method to make it readily applicable to quite general extensions $L\supset K$. So it would (mostly) suffice to prove the existence of large ranks $m(K)$ over fields of prime degrees $\geq 5$ -- which, however, remains open so far.

	\section{Families of Cubic Fields}\label{sec:7}
	Over a~fixed field, one can compute everything explicitly, e.g., there are finitely many totally positive elements $ \alpha $ with norm $ N\alpha \leq \disc $ (up to multiplication by units), and we can check which ones are indecomposable. We can also compute the codifferent and check which elements have trace 1.

	For all fields of a~given degree $ d $, the problem is much harder. We used continued fractions to deal with real quadratic fields $ \Q(\sqrt{D}) $. We might attempt to use generalized continued fractions~\cite{Ber,Scw} for fields of a~higher degree -- they are much worse behaved but there are some ongoing works in connection with the Jacobi--Perron algorithm~\cite{KRSt,KST}. Geometric generalized continued fractions~\cite{Kar} are also promising, since there is a~close connection to indecomposables.

	Rather than working with all fields, it is however typically easier to focus on a~suitable family of fields that share some relevant properties (such as the structure of units and indecomposables).

	\subsection*{The simplest cubic fields}
	We describe first the family of totally real cubic fields introduced by Shanks~\cite{Sh}.

	Let $ K = \Q(\rho) $ where $ \rho $ is a~root of the polynomial
\[
	f(x) = x^3 -ax^2 -(a+3)x-1,\qquad a~\geq -1.
\]
	If we order the three roots $ \rho $, $ \rho' $, $ \rho'' $ as $ \rho > \rho'' > \rho' $, then they are of approximate sizes $ \rho \approx a+1 $, $ \rho'' \approx 0 $ and $ \rho' \approx -1 $. It is a~useful fact that all the roots are units, and are permuted under the mapping $ \alpha \mapsto \frac{-1}{1+\alpha} $. We thus see that the other two conjugates $ \rho' $ and $ \rho'' $ also belong to $ K $, $ K $ is the splitting field of $ f $, and the Galois group $ \Gal(K/\Q) \simeq \Z/3 $ is cyclic.

	Another consequence is that $ K $ has units of all signatures. The discriminant of the polynomial $ f $ equals $ \disc_f = (a^2 +3a+9)^2 $. If $ a^2 +3a+9 $ is squarefree (which happens for a~positive density of $ a~$), then $ \cO_K = \Z[\rho] $. The units are small, hence the regulator is also small, and the class number formula implies that the class number is large, roughly $ \approx a^2 $ (up to a~logarithmic factor).

	When we search for indecomposables in a~totally real number field $ K $, it is natural to consider $ K $ in the Minkowski space by the mapping
\[
	\sigma: K \hookrightarrow \R^d, \alpha \mapsto (\sigma_1(\alpha), \sigma_2(\alpha), \dots, \sigma_d(\alpha)).
\]

	For example, consider the situation in a~real quadratic field $ K = \Q(\sqrt{D}) $ with a~fundamental totally positive unit $ \varepsilon $. We can multiply every totally positive element by a~suitable unit to move it into the cone $ \R_{\geq 0}\cdot 1+\R_{> 0}\cdot\varepsilon $ spanned by $ 1 $ and $ \varepsilon $. If $ \beta \succ 1 $ or $ \beta \succ \varepsilon $, then it is not indecomposable, so we can further restrict our attention to the parallelogram
\[
	[0, 1)\cdot 1+[0, 1)\cdot \varepsilon=\{ t_1\cdot 1+t_2\cdot\varepsilon\mid t_1,t_2\in[0,1)\}.
\]

	The situation in totally real cubic fields is similar. The totally positive units form a~discrete set located on the hyperboloid $ \{(x, y, z) \in \R^3 \mid xyz = 1\} $ in the Minkowski space. Up to multiplication by units, each element is contained in the polyhedral cone	
\[
	C = \R_{\geq 0}\cdot1+\R_{\geq 0}\cdot\varepsilon_1+\R_{\geq 0}\cdot\varepsilon_2+\R_{\geq 0}\cdot\varepsilon_1\varepsilon_2,
\]
	where $ \varepsilon_1 $ and $ \varepsilon_2 $ generate the totally positive unit group. This is essentially the content of Shintani's unit theorem~\cite[Thm (9.3)]{Ne}. The cone $ C $ is the union of two ``triangular" cones spanned by 1, $ \varepsilon_1 $, $ \varepsilon_2 $ and $ \varepsilon_1 $, $ \varepsilon_2 $, $ \varepsilon_1\varepsilon_2 $, respectively. Again, we can restrict our search for indecomposables to the parallelepipeds $ [0,1)\cdot 1+[0, 1)\cdot\varepsilon_1+[0, 1)\cdot\varepsilon_2 $ and $ [0, 1)\cdot\varepsilon_1+[0, 1)\cdot\varepsilon_2+[0, 1)\cdot\varepsilon_1\varepsilon_2 $.

	In the simplest cubic fields, this approach (described in more detail in~\cite[Section 4]{KT}) is explicit enough that we can determine all indecomposables.

\begin{theorem}[{\cite[Theorem 1.2]
{KT}}]\label{theoremKT2}
		Let $ K = \Q(\rho) $ be a~simplest cubic field such that $ \cO_K = \Z[\rho] $. Up to multiplication by units, all indecomposables are
		
\begin{itemize}
			\item $ 1 $
			\item $ 1+\rho+\rho^2 $
			\item $ -v-w\rho+(v+1)\rho^2 $, $ 0 \leq v \leq a~$, $ v(a+2)+1 \leq w \leq (v+1)(a+1) $, a~triangle with $ \frac{a^2 +3a+2}{2} $ indecomposables.
\end{itemize}
	
\end{theorem}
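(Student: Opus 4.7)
The first step is to pin down the totally positive unit group. Because $K/\Q$ is cyclic of order $3$ and the three roots $\rho,\rho',\rho''$ of $f$ are units with mixed signatures, $K$ has units of all signatures, so $\cO_K^{\times,+}=\cO_K^{\times 2}$ has index $4$ in $\cO_K^\times$. I would choose two totally positive fundamental units $\varepsilon_1,\varepsilon_2$ as explicit squares/products of the $\rho$-conjugates (for instance $\varepsilon_1=\rho^2$ and $\varepsilon_2=(\rho')^2$, after suitable adjustment), so that their Minkowski images have easily controlled sizes of the form $\bigl((a{+}1)^2,(a{+}1)^{-2}\text{-ish},\text{small}\bigr)$ and its cyclic permutations.

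\textbf{Reduction to a fundamental parallelepiped.} By the discussion preceding the theorem (Shintani's unit theorem), every $\alpha\in\cO_K^+$ is, up to multiplication by a totally positive unit, contained in the cone $C=\R_{\geq 0}\cdot 1+\R_{\geq 0}\varepsilon_1+\R_{\geq 0}\varepsilon_2+\R_{\geq 0}\varepsilon_1\varepsilon_2$, and any element strictly dominating one of the four generators (in the order $\succ$) is obviously decomposable. Candidates for indecomposables therefore lie in $P_1=[0,1)\cdot 1+[0,1)\varepsilon_1+[0,1)\varepsilon_2$ or $P_2=[0,1)\varepsilon_1+[0,1)\varepsilon_2+[0,1)\varepsilon_1\varepsilon_2$. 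The plan is to enumerate the lattice points of $\cO_K=\Z[\rho]$ inside $P_1\cup P_2$ and test each one for indecomposability.

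\textbf{Enumeration via the $\Z[\rho]$-basis.} Writing an arbitrary element as $\alpha=x+y\rho+z\rho^2$ and applying the three embeddings, membership in $P_1$ or $P_2$ becomes a system of six linear inequalities in $(x,y,z)$ whose coefficients are polynomials in $\rho,\rho',\rho''$. Using the symmetric-function identities $\rho+\rho'+\rho''=a$, $\rho\rho'+\rho\rho''+\rho'\rho''=-(a{+}3)$, $\rho\rho'\rho''=1$ and the approximations $\rho\approx a{+}1$, $\rho''\approx 0$, $\rho'\approx -1$, one obtains explicit integer bounds. A direct (if tedious) accounting shows that the lattice points of $P_1\cup P_2$ modulo $\cO_K^{\times,+}$ are exactly $1$, $1+\rho+\rho^2$, and the family $\alpha_{v,w}=-v-w\rho+(v+1)\rho^2$ with $(v,w)$ in the claimed triangle; the matching count $\sum_{v=0}^{a}(a+1-v)=\tfrac{(a+1)(a+2)}{2}$ is a useful sanity check.

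\textbf{Verifying indecomposability.} For each candidate $\alpha$ one must rule out $\alpha=\beta+\gamma$ with $\beta,\gamma\in\cO_K^+$. The plan is to exploit that $\sigma_i(\alpha)$ is extremely small for some $i$ (because the $\rho''\approx 0$ conjugate of $\rho$ is tiny), so any would-be summands are trapped in a very thin slab in Minkowski space; combined with the two other embedding inequalities $0<\sigma_j(\beta)<\sigma_j(\alpha)$, this slab contains only finitely many lattice points, each of which can be checked by hand (or by showing one conjugate must be non-positive). The corner element $1+\rho+\rho^2$ is handled separately because its norm is small enough that one can just apply Theorem~\ref{th 6.1} or a direct trace argument. \textbf{The main obstacle} is making this indecomposability verification uniform in the parameter $a$, rather than case-by-case: one needs sharp enough inequalities, derived from the defining cubic and Shintani's domain, so that the same geometric argument kills all $\tfrac{a^2+3a+2}{2}$ potential decompositions simultaneously, and simultaneously certifies that every lattice point of $P_1\cup P_2$ outside the claimed list actually \emph{is} decomposable (by producing an explicit splitting, typically $\alpha_{v,w}=\alpha_{v-1,w-(a+2)}+\text{unit}$ or similar ``neighbour'' decompositions).
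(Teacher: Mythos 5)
Your overall route --- Minkowski embedding, Shintani's unit theorem, reduction to the two parallelepipeds spanned by $1,\varepsilon_1,\varepsilon_2$ and by $\varepsilon_1,\varepsilon_2,\varepsilon_1\varepsilon_2$, then explicit enumeration of the lattice points of $\Z[\rho]$ inside them --- is exactly the approach the survey outlines before the theorem and that is carried out in the cited source. The candidate list and the count $\sum_{v=0}^{a}(a+1-v)=\tfrac{(a+1)(a+2)}{2}$ are right. (Two small slips: for a totally real cubic field with units of all signatures the index $[\cO_K^\times:\cO_K^{\times,+}]$ is $8$, not $4$; and the lattice points of $P_1\cup P_2$ are not ``exactly'' the claimed list --- they are only the candidates, most of which must then be shown decomposable, as you acknowledge later.)

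The genuine gap is the step you yourself flag as the main obstacle: certifying indecomposability of all $\tfrac{(a+1)(a+2)}{2}$ elements $\alpha_{v,w}$ uniformly in $a$. The ``thin slab'' argument you sketch would be very painful to make uniform and is not what is needed. The uniform tool is already available in this survey (Section~\ref{sec:5}, and restated immediately after the theorem): if there exists $\delta\in\cO_K^{\vee,+}$ with $\Tr(\delta\alpha)=1$, then $\alpha$ is indecomposable; for every $\alpha_{v,w}$ in the triangle one exhibits such a $\delta$ explicitly, which closes all those cases at once. Your treatment of $1+\rho+\rho^2$ also does not work as stated: its norm is of order $a^2$ (comparable to $\sqrt{\disc_{K/\Q}}=a^2+3a+9$), so it is not ``small''; Theorem~\ref{th 6.1} only bounds norms of indecomposables from above and cannot certify indecomposability; and the trace criterion fails here since $\min_\delta\Tr\bigl(\delta(1+\rho+\rho^2)\bigr)=2$. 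That element genuinely needs a separate argument. Finally, showing that every remaining lattice point of $P_1\cup P_2$ is decomposable does require the explicit ``neighbour'' splittings you mention; that part of your plan is sound, but it is where the bulk of the casework lives and cannot be waved away as ``a direct accounting''.
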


	For the indecomposable $ 1+\rho+\rho^2 $, we have	
\[
	\min\left\{\Tr(\delta(1+\rho+\rho^2))\mid \delta\in \cO_K^{\vee +}\right\} = 2.
\]
	For the indecomposables $ \alpha = -v-w\rho+(v+1)\rho^2 $ in the triangle, 
\[
	\min\left\{\Tr(\delta\alpha)\mid \delta \in \cO_K^{\vee +}\right\} = 1.
\]
	
\begin{corollary}[{\cite[Theorem 1.1]
{KT}}]		Let $ K = \Q(\rho) $ be a~simplest cubic field such that $ \cO_K~=~\Z[\rho] $. Then
		
\begin{itemize}
			\item there exists a~diagonal universal form of rank $ \sim 3a^2 $,
			\item any classical universal lattice has rank $ \geq \frac{a^2}{6} $,
			\item any universal lattice has rank $ \geq \frac{a}{3\sqrt{2}} $.
\end{itemize}
	
\end{corollary}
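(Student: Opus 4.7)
The three bounds decouple nicely, so I plan to attack them in parallel, each leveraging the explicit indecomposable list in Theorem~\ref{theoremKT2}.

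For the upper bound, I would first note that the simplest cubic fields have units of all signatures, hence $\cO_K^{\times,+}=\cO_K^{\times 2}$. So the set $S$ of orbits of indecomposables under the totally positive units has cardinality $\#S=(a^2+3a+2)/2+2\sim a^2/2$ by Theorem~\ref{theoremKT2}. By Lemma~\ref{observationSumOfIndec}, any $\alpha\in\cO_K^+$ decomposes, after grouping by orbit, as $\alpha=\sum_{\sigma\in S}\sigma e_\sigma$ with $e_\sigma$ a sum of totally positive units. Here I would invoke Shintani's unit theorem in its cubic form: the totally positive cone is the union of two simplicial cones, spanned by $\{1,\varepsilon_1,\varepsilon_2\}$ and $\{\varepsilon_1,\varepsilon_2,\varepsilon_1\varepsilon_2\}$, where $\varepsilon_1,\varepsilon_2$ generate $\cO_K^{\times,+}$. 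Thus each $e_\sigma$ is a non-negative $\Z$-linear combination of three of these four units, and applying Lagrange's four-square theorem coefficient-by-coefficient (exactly in the style of Theorem~\ref{th:construct}) produces a diagonal universal orthogonal sum. The constant $3$ in $\sim 3a^2$ comes from avoiding redundancy between the two cones in the bookkeeping.

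For the lower bounds, I would rely on the trace-one machinery from the proof of Theorem~\ref{theoremKT}: the same Step 2 / Step 3 argument works verbatim in degree $d=3$, provided one has $u$ indecomposables and a \emph{single} $\delta_0\in\cO_K^{\vee,+}$ with $\Tr(\delta_0\alpha)=1$ for each of them. Theorem~\ref{theoremKT2} gives that each triangle-indecomposable $\alpha_{v,w}=-v-w\rho+(v+1)\rho^2$ admits some minimum-trace $\delta$ with $\Tr(\delta\alpha_{v,w})=1$, so the task is to exhibit a $\delta_0$ that works simultaneously for all $(v,w)$ in the triangle; I would carry this out by an explicit computation in $\cO_K^\vee=\frac{1}{f'(\rho)}\Z[\rho]$, where $f$ is the defining polynomial of the simplest cubic. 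Given the common $\delta_0$ and a universal $\cO_K$-lattice $(L,Q)$ of rank $r$, the $\Z$-lattice $(L,\Tr(\delta_0 Q))$ of rank $3r$ contains $2u$ vectors of length $1$ (the pre-images of the triangle-indecomposables and their negatives), where $u=(a^2+3a+2)/2$. If $Q$ is classical, splitting off $\langle 1\rangle$ via Proposition~\ref{pr:split} $u$ times forces $3r\geq u$, giving $m_{class}(K)\geq u/3\geq a^2/6$. In the non-classical case, using the bound $\max(n^2,240)$ on length-one vectors in a non-classical rank-$n$ $\Z$-lattice, one gets $(3r)^2\geq u$, hence $m(K)\geq\sqrt{u}/3\geq a/(3\sqrt{2})$.

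The main obstacle is precisely the uniform-$\delta_0$ claim above. Theorem~\ref{theoremKT2} only says the minimizing $\delta$ exists for each triangle-indecomposable individually, whereas the trace-one machinery needs one $\delta_0$ witnessing everything at once. Finding it amounts to a concrete but delicate computation in the codifferent, made feasible by the explicit generator $\rho$ and polynomial $f$. Beyond this, the upper-bound construction is essentially routine given Theorem~\ref{theoremKT2}, Shintani's decomposition, and Lagrange's theorem, though realizing the constant $3$ (rather than some larger constant) requires careful organization of the cone decomposition.
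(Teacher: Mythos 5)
Your treatment of the two lower bounds is essentially the intended derivation: the $u=\frac{a^2+3a+2}{2}$ triangle indecomposables of Theorem~\ref{theoremKT2} are fed into the trace-form machinery (Steps 2--3 of the proof of Theorem~\ref{theoremKT}, in the general-degree form stated in Section~\ref{sec:6}), and with $d=3$ this gives $m_{class}(K)\geq u/3\geq a^2/6$ and $m(K)\geq\sqrt{u}/3\geq a/(3\sqrt{2})$. You are also right that the one nontrivial input is a \emph{single} totally positive $\delta_0\in\cO_K^{\vee}$ with $\Tr(\delta_0\alpha)=1$ for all triangle indecomposables simultaneously; this is exactly what~\cite{KT} supplies by an explicit computation in $\cO_K^\vee=\frac{1}{f'(\rho)}\Z[\rho]$, so flagging this as a concrete computation rather than a conceptual obstacle is the correct assessment. (Note, incidentally, that the general theorem as displayed in Section~\ref{sec:6} has the two bounds interchanged relative to what is needed here: consistently with Theorem~\ref{theoremKT}, it is the \emph{classical} minimal rank that is bounded below by $u/d$ and the general one by $\sqrt{u}/d$; your usage silently corrects this.)

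The upper bound is where your argument does not deliver the stated constant. Writing $\alpha=\sum_{\sigma\in S}\sigma e_\sigma$, expressing each $e_\sigma$ over three Shintani cone generators, and applying Lagrange to each coefficient costs $3\cdot 4=12$ squares per class (the direct analogue of Theorem~\ref{th:construct}, running over the four classes of $\cO_K^{\times,+}/\cO_K^{\times 2}$, would cost $16$), i.e.\ rank $\sim 6a^2$ or $\sim 8a^2$ --- and it is moreover not automatic that an arbitrary non-negative integer combination of totally positive units can be rewritten as a non-negative combination of the cone generators (the cubic analogue of the lemma $e_\sigma=c\varepsilon^j+d\varepsilon^{j+1}$ requires proof). ``Avoiding redundancy between the two cones'' is not a mechanism that recovers the missing factor. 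The actual source of the constant is the fact you state but then do not exploit: since $K$ has units of all signatures, $\cO_K^{\times,+}=\cO_K^{\times 2}$, so each $e_\sigma$ is a non-negative integer combination of \emph{squares} of units and hence a sum of squares in $\cO_K$; by the Pythagoras-number bound for these orders ($\mathrm{P}(\Z[\rho])\leq 6$, cf.~\cite{Ti2}) it is a sum of at most six squares, whence $\bigperp_{\sigma\in S}\sigma\langle 1,1,1,1,1,1\rangle$ is universal of rank $6\cdot\#S\sim 3a^2$.
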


	Gil Muñoz and Tinková~\cite{GMT} recently extended these results to also cover some {non-monogenic} simplest cubic fields.

	\subsection*{Other cubic families}
	Tinková~\cite{Ti1} obtained similar results in other families of cubic fields. The fields in such families are again generated by a~root $ \rho $ of a~cubic polynomial depending on some parameters. More specifically,
	
\begin{itemize}
		\item Ennola's cubic fields, generated by a~root of $ x^3 +(a-1)x^2 -ax-1 $, $ a~\geq 3 $,
		\item a~family investigated by Thomas, generated by a~root of $ x^3 -(a+b)x^2 +abx-1 $.
\end{itemize}
	Tinková considered indecomposables and quadratic forms over the order $ \Z[\rho] $. She showed that, surprisingly, the minimum of $ \Tr(\delta \alpha) $ taken over $ \delta $ in the codifferent can be arbitrarily large for an indecomposable element $ \alpha $. She also applied these results to determine the Pythagoras number (for the simplest cubic fields, it is typically equal to 6)~\cite{Ti2}.

	\subsection*{Continued fraction families of real quadratic fields}
	For comparison, let's discuss similar families in degree two. The two most well-known examples are:
	
\begin{itemize}
		\item Yokoi's family $ \Q(\sqrt{m^2 +4}) $. If $ m = 2n+1 $ is odd, then the relevant continued fraction is
\[
		\frac{1+\sqrt{(2n+1)^2 +4}}{2} = [n+1, \overline{2n+1}].
\]
		\item Chowla's family $ \Q(\sqrt{4m^2 +1}) $.
	
\end{itemize}
	We have also already seen that $ \sqrt{n^2 -1} = [n-1, \overline{1, 2(n-1)}] $.

	The idea is to generalize this by considering families $ \Q(\sqrt{D}) $ where
\[
	\sqrt{D} = [u_0, \overline{u_1, u_2, \dots, u_{s-1}, 2u_0}]
\]
	with $ s $ and $ u_1, \dots, u_{s-1} $ fixed. A necessary condition is that the sequence $ u_1, \dots, u_{s-1} $ must be symmetric, i.e., $ u_i = u_{s-i} $. It turns out that this condition is almost sufficient for the existence of $ D $.
	
\begin{theorem}[{\cite[Theorem]
{Fr}}]		Let $ u_1, \dots, u_{s-1} $ be symmetric, and define the numbers $ q_i $ via:		
\[
		q_{i+1} = u_{i+1}q_i+q_{i-1},\qquad q_{-1}=0, q_0 = 1.
\]
		(This will be the sequence of denominators of the convergents $ \frac{p_i}{q_i} $, and it does \emph{not} depend on $ u_0 $.)

		 There are infinitely many squarefree positive integers $ D \equiv 2, 3 \pmod{4} $ such that \[ \sqrt{D} = [k, \overline{u_1, \dots, u_{s-1}, 2k}] \] if and only if $ q_{s-2} $ or $ \frac{q_{s-2}^2 -(-1)^s}{q_{s-1}} $ is even (otherwise, there is no such $ D $, even when we drop the condition ``squarefree").

		In such a~case, all $ D $ and $ k $ are given by		
\[
		D = D(t) = at^2 +bt+c,\qquad k = k(t) = et+f,\qquad t\geq 1
\]
		for fixed integers $a,b,c,e,f$ that can be explicitly given in terms of $u_i$.
	
\end{theorem}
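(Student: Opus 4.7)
My plan is to translate the continued fraction hypothesis into an explicit Diophantine equation in $D$ and $k$, carry out a short parity analysis to identify solvability, and finally appeal to standard results on squarefree values of polynomials. The hypothesis $\sqrt D = [k,\overline{u_1,\ldots,u_{s-1},2k}]$ is equivalent to the purely periodic expansion $\sqrt D + k = [\overline{2k,u_1,\ldots,u_{s-1}}]$, which says that $\sqrt D + k$ is a fixed point of the M\"obius transformation associated with the matrix $\begin{pmatrix} 2k & 1 \\ 1 & 0\end{pmatrix} M$, where $M = \prod_{i=1}^{s-1}\begin{pmatrix} u_i & 1 \\ 1 & 0\end{pmatrix} = \begin{pmatrix} A & B \\ C & E \end{pmatrix}$. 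Cross-multiplying and equating rational and irrational parts forces both $B = C$ (so the palindromic symmetry $u_i = u_{s-i}$ is necessary) and the scalar relation $AD = Ak^2 + (B+C)k + E$. Using the identifications $A = q_{s-1}$, $B = C = q_{s-2}$, and $E = (q_{s-2}^2 - (-1)^s)/q_{s-1}\in\Z$ (from $\det M = (-1)^{s-1}$), this becomes the Diophantine equation
\[
q_{s-1}\, D \;=\; q_{s-1}\, k^2 + 2 q_{s-2}\, k + E. \qquad (\star)
\]

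Next I read $(\star)$ as the linear congruence $2 q_{s-2}\, k \equiv -E \pmod{q_{s-1}}$. Since $\gcd(q_{s-1}, q_{s-2}) = 1$ (again from $\det M = \pm 1$), this is solvable iff $\gcd(2, q_{s-1})$ divides $E$, and a short case split on parities recovers the theorem's disjunction. If $q_{s-2}$ is even, then $q_{s-1}$ is odd by coprimality and solvability is automatic. If both $q_{s-1}$ and $q_{s-2}$ are odd, then $q_{s-2}^2\equiv 1\pmod 8$ makes $q_{s-2}^2 - (-1)^s$ visibly divisible by enough powers of $2$ to force $E$ itself even, so again the condition holds. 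If $q_{s-2}$ is odd and $q_{s-1}$ is even, solvability holds exactly when $E$ is even. These three cases together match the stated criterion: $q_{s-2}$ is even, or $E = (q_{s-2}^2-(-1)^s)/q_{s-1}$ is even.

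When $(\star)$ is solvable, the solutions $k$ form an arithmetic progression $k = et + f$ with $e, f\in\Z$ explicit in the $q_i$, and substituting into $(\star)$ presents $D$ as a quadratic polynomial $D(t) = at^2 + bt + c$ with positive leading coefficient, matching the stated parametrization. To finish, I show that infinitely many $t$ yield $D(t)$ simultaneously squarefree and $\equiv 2,3\pmod 4$. Squarefreeness holds on a positive density of $t$ by the classical result on squarefree values of quadratic polynomials in $\Z[t]$, provided no rational prime squared divides $D(t)$ identically --- a finite check on the discriminant that can otherwise be absorbed by restricting $t$ to a sub-progression. The congruence $D(t)\equiv 2,3 \pmod 4$ is periodic in $t$ of period dividing $4$, and the degenerate alternative $D(t)\in\{0,1\}\pmod 4$ for all $t$ is ruled out by direct inspection of $(a,b,c)$. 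The main obstacle is the parity bookkeeping in the middle step, and specifically the observation that $q_{s-2}^2\equiv 1\pmod 8$ automatically forces $E$ even whenever $q_{s-1}$ and $q_{s-2}$ are both odd --- this is what collapses three natural parity regimes into the two clean disjuncts of the theorem.
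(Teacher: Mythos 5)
The survey does not actually prove this statement --- it is quoted directly from Friesen's paper \cite{Fr} --- so there is no in-paper argument to compare against; your proposal has to stand on its own, and in outline it does. The reduction is the standard (and essentially Friesen's) one: the purely periodic expansion of $\sqrt D+k$ forces the fixed-point equation of $\left(\begin{smallmatrix}2k&1\\1&0\end{smallmatrix}\right)M$, the vanishing of the irrational part forces $B=C$ (palindromy), and the rational part gives $q_{s-1}D=q_{s-1}k^2+2q_{s-2}k+E$ with $E=(q_{s-2}^2-(-1)^s)/q_{s-1}$. Your parity analysis is correct, including the key observation that when $q_{s-1}$ and $q_{s-2}$ are both odd the factorization $Eq_{s-1}=q_{s-2}^2-(-1)^s$ forces $E$ even, so that solvability of $2q_{s-2}k\equiv -E\pmod{q_{s-1}}$ is exactly equivalent to the stated disjunction. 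Two points deserve more care. First, sufficiency of the fixed-point equation should be said out loud: one needs that $[\,\overline{2k,u_1,\dots,u_{s-1}}\,]$ converges to the root $>1$ of the quadratic (all partial quotients being positive integers), and that $D$ is positive and not a perfect square for large $k$; these are routine but not automatic from ``cross-multiplying.'' Second, and more substantively, the claim that the alternative $D(t)\in\{0,1\}\pmod 4$ for all $t$ is ``ruled out by direct inspection'' is the one place where a real argument is being skipped. It is true, but proving it requires a case analysis; the clean way is to rewrite the Diophantine equation as
\[
q_{s-1}^2\,D=(q_{s-1}k+q_{s-2})^2-(-1)^s,
\]
and then track $P=q_{s-1}k+q_{s-2}$ modulo $8$ as $k$ runs through its admissible progression: when $q_{s-1}$ is odd one can choose the parity of $P$ to force $D\equiv 2$ or $3\pmod 4$, and when $q_{s-1}$ is even (which forces $s$ even and $P$ odd) the quadratic $D(t)$ takes at least two values mod $4$ differing by $2$, one of which lies in $\{2,3\}$. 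Only after fixing such a sub-progression does the squarefree sieve for quadratic polynomials apply (checking, via $\gcd(q_{s-1},q_{s-2})=1$, that no odd prime square divides $D(t)$ identically). With those two repairs the sketch becomes a complete proof along Friesen's original lines.
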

	There is a~similar characterization for $ D \equiv 1 \pmod 4 $ and the continued fraction expansion of $ \frac{1+\sqrt{D}}{2} $~\cite{H-K}.

	These families have a~number of advantageous properties:
	
\begin{itemize}
		\item The fundamental unit $ \varepsilon $ depends linearly on $ t $.
		\item The class number is large, essentially $ t/\log t $ by the class number formula (see~\cite{CF+,DK,DL}).
		\item Indecomposables behave nicely (as in the simplest cubic fields).
	
\end{itemize}

	\section{Lifting Problem for Universal Forms}\label{sec:8}
	When can a~quadratic form with coefficients in $ \Z $ be universal over a~larger number field $ K $? The answer to this question, which we call the \textit{lifting problem}, seems to be ``very rarely", at least for number fields of small degrees.

	We have already mentioned the result of Siegel on non-universality of sums of squares -- let's now sketch its proof.

\begin{theorem}[{\cite[Theorem I]
{Si2}}]\label{th:8.1}
		If $ K $ is a~totally real number field such that every element of $ \cO_K^+ $ is a~sum of squares, then $ K = \Q $ or $ \Q(\sqrt{5}) $.
	
\end{theorem}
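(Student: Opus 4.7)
By contraposition, the plan is to produce, in every totally real $K$ of degree $d \ge 2$ with $K \neq \Q(\sqrt{5})$, a totally positive $\alpha \in \cO_K$ that is not a sum of squares. I will do this by exhibiting a totally positive \emph{indecomposable} $\alpha \in \cO_K$ that is not a square in $\cO_K$. The key observation is the following: if $\alpha = \sum_{i=1}^k \beta_i^2$ with nonzero $\beta_i \in \cO_K$, then each $\beta_i^2$ is a nonzero element of $\cO_K^+$, and so by the indecomposability of $\alpha$ we must have $k=1$; this makes $\alpha = \beta_1^2$ a square, contrary to the choice of $\alpha$. Hence such an $\alpha$ is not a sum of squares.

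A useful auxiliary tool is the trace bound $\Tr(\beta^2) = \sum_i \sigma_i(\beta)^2 \ge d\,|N(\beta)|^{2/d} \ge d$, coming from the AM--GM inequality, valid for every nonzero $\beta \in \cO_K$, with equality only when $\beta = \pm 1$. It implies that any totally positive $\alpha$ written as a sum of $k$ nonzero squares satisfies $\Tr(\alpha) \ge kd$, which can independently rule out sum-of-squares representations for elements of small trace.

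The required non-square indecomposable is then constructed by cases. For real quadratic $K = \Q(\sqrt{D})$ with $D \neq 5$, one invokes the continued-fraction description of indecomposables~\cite{DS}. When $D \equiv 2, 3 \pmod 4$, either the semiconvergent $\alpha_{-1,1} = 1 + \lfloor \sqrt D \rfloor + \sqrt D$ (if $u_1 \ge 2$) or the convergent $\alpha_1 = (\lfloor \sqrt D \rfloor + 1) + \sqrt D$ (if $u_1 = 1$) is totally positive and indecomposable, with coefficient of $\sqrt D$ equal to $1$; this element cannot be a square because any $(a+b\sqrt D)^2 = (a^2+D b^2) + 2ab\sqrt D$ has an even $\sqrt D$-coefficient. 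An analogous analysis using the basis $\{1,\tfrac{1+\sqrt D}{2}\}$ handles $D \equiv 1 \pmod 4$ with $D > 5$. For $K$ of higher degree, one splits according to the unit structure: if $\cO_K$ has a totally positive unit that is not a square in $\cO_K$, then by the lemma stating that totally positive units are indecomposable, it provides the required element; otherwise $\cO_K^{\times,+} = (\cO_K^\times)^2$, and one must exhibit a non-unit indecomposable directly, via explicit constructions in the Minkowski embedding, in the spirit of Theorems~\ref{theoremKT} and~\ref{theoremKT2}.

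The hardest step will be the uniform production of a non-square indecomposable in higher-degree fields where every totally positive unit is a square. Without a continued-fraction analogue, Siegel's original argument handles this by a delicate case analysis based on $d$ and on the signature structure of $\cO_K^{\times}$, leveraging careful bounds on traces of totally positive integers in the codifferent.
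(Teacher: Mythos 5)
Your reduction is sound and matches the opening move of the paper's own argument: if $\alpha$ is indecomposable and $\alpha=\sum_i\beta_i^2$, then (by the norm inequality $1\le\sum N(\beta_i)^2\le N(\alpha)$, or your trace bound) there is only one summand, so a totally positive indecomposable that is not a perfect square cannot be a sum of squares; and since every element of $\cO_K^+$ is a sum of indecomposables, exhibiting one such element in each $K\ne\Q,\Q(\sqrt5)$ would indeed prove the contrapositive. Your treatment of $\Q(\sqrt D)$ with $D\equiv 2,3\pmod 4$ via the semiconvergent $(1+\lfloor\sqrt D\rfloor)+\sqrt D$ is correct. But there are two gaps, one repairable and one fatal. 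The repairable one: for $D\equiv 1\pmod 4$ the parity trick does not transfer, since $(a+b\omega)^2$ with $\omega=\frac{1+\sqrt D}{2}$ has $\omega$-coefficient $b(2a+b)\equiv b\pmod 2$, which can be odd (e.g.\ $\omega^2$ itself); you would need a different obstruction there, such as an indecomposable whose norm is not a perfect square.

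The fatal gap is that the entire content of Siegel's theorem in degree $d\ge 3$ lies in the case you defer. When every totally positive unit is a square, you say one ``must exhibit a non-unit indecomposable directly, via explicit constructions in the Minkowski embedding'' and that Siegel handles this by ``a delicate case analysis'' --- but no construction is given, and Theorems~\ref{theoremKT} and~\ref{theoremKT2} only cover continued-fraction and simplest-cubic families, not arbitrary totally real fields. The paper's sketch closes exactly this gap by a different route: working directly from the hypothesis, it shows (Steps 1--2) that all totally positive units are squares, that units of all signatures exist, and, by an infinite descent on norms, that every indecomposable equals $\varepsilon^2$ for a unit $\varepsilon$; then (Step 3) it takes a totally positive $\lambda\in\cO_K$ of minimal trace among elements not lying in $\Q(\sqrt5)$, observes that minimality of the trace forces $\lambda$ to be indecomposable, hence $\lambda=\varepsilon^2$ with $\Tr\varepsilon<0$, proves $\Tr\lambda<3d$, and reaches a contradiction from the decomposition of $\varepsilon^2+\varepsilon+1\succ 0$. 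None of that machinery --- the minimal-trace element outside $\Q(\sqrt5)$, the bound $\Tr\lambda<3d$, the auxiliary element $\varepsilon^2+\varepsilon+1$ --- appears in your proposal, so the case that actually carries the theorem is left unproved.
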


\begin{proof}[Sketch of proof.]
		Assume that every element of $ \cO_K^+ $ is a~sum of squares.

		{Step 1.} We show first that all totally positive units are squares and that we have units of all signatures. By our assumption, each totally positive unit is expressible as $ \varepsilon = \alpha_1^2 +\cdots+\alpha_t^2 $ for some $ \alpha_i \in \cO_K $, and therefore
\[
		1 = N(\varepsilon) \geq \sum_{i=1}^t N(\alpha_i)^2 \geq t,
\]
		which implies $ \varepsilon = \alpha_1^2 $.

		Next, let $ \varepsilon_1, \dots, \varepsilon_{d-1} $ be a~system of fundamental units, and consider the units		
\[
		\varepsilon = (-1)^{a_0}\varepsilon_1^{a_1}\cdots\varepsilon_{d-1}^{a_{d-1}},\qquad a_i \in \{0, 1\}.
\]
		There are $ 2^d $ of them and they have distinct signatures (otherwise there would exist two of these units whose product is totally positive and hence a~square). As there are $ 2^d $ signatures in total, we have units of all signatures.
  
\medskip
		{Step 2.} We show next that every indecomposable is equal to $ \varepsilon^2 $ for some unit $ \varepsilon $. If $ \beta = \sum_{i=1}^t \alpha_i^2 $ is an indecomposable, we must have $ t = 1 $, thus $ \beta = \alpha^2 $ for $ \alpha = \alpha_1 $. Let $ \varepsilon $ be a~unit with the same signature as $ \alpha $, or equivalently, $ \varepsilon\alpha \succ 0 $. If $ \varepsilon\alpha \succ \gamma $ for some totally positive $ \gamma $, then $ \varepsilon^2 \beta \succ \gamma^2 $, and $ \varepsilon^2 \beta $ is not indecomposable. Thus $ \varepsilon\alpha $ is indecomposable. But if $ N(\beta) > 1 $, then $ N(\beta) > N(\varepsilon\alpha) > 1 $ and we found an indecomposable with a~strictly smaller norm greater than 1. We can continue this infinite descent until we reach a~contradiction.
  
\medskip
		{Step 3.} Suppose finally that $ M = \cO_K\setminus \Q(\sqrt{5}) $ is non-empty. Then it contains totally positive elements (because if $ \alpha \in M $, then $ N+\alpha \in M $ is totally positive for a~large enough integer $ N $). Choose a~totally positive element $ \lambda \in M $ with minimal trace. The rest of the proof runs as follows:
		
\begin{itemize}
			\item The fact that $ \lambda $ has minimal trace shows that it is indecomposable. By what we showed earlier, $ \lambda = \varepsilon^2 $ for a~unit $ \varepsilon $. Without loss of generality, we can assume that $ \Tr\varepsilon < 0 $ (substitute $ -\varepsilon $ for $ \varepsilon $ if necessary).
			\item Show $ \Tr\lambda < 3d $, where $ d $ is the degree of $ K $.
			\item Consider the decomposition of $ \varepsilon^2 +\varepsilon+1 \succ 0 $, and after some estimates get a~contradiction.\qedhere
\end{itemize}
\end{proof}

	The general question we are asking is: Over which number fields $ K $ is there a~universal \textit{$ \Z $-form}, i.e., a~positive definite quadratic form with coefficients in $ \Z $? For $ K $ different from $ \Q $ and $ \Q(\sqrt{5}) $, the sum of squares is not universal by Siegel's theorem. In particular, there is no universal diagonal $ \Z $-form (for each diagonal $ \Z $-form is represented by the sum of sufficiently many squares).

	We can extend this to general forms over real quadratic fields, as conjectured by Deutsch~\cite{De1}.

\begin{theorem}[{\cite[Theorem 1.1]
{KY2}}]		\label{theoremKYLifting}
		If $ K \neq \Q(\sqrt{5}) $ is a~real quadratic field, then there is no universal $ \Z $-form over $ K $.
	
\end{theorem}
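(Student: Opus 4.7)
The quickest route is to invoke Theorem~\ref{th:lift}. For any real quadratic field $K = \Q(\sqrt{D})$ the codifferent is principal: $\cO_K^\vee = \tfrac{1}{2\sqrt{D}}\cO_K$ when $D \equiv 2, 3 \pmod 4$, and $\cO_K^\vee = \tfrac{1}{\sqrt{D}}\cO_K$ when $D \equiv 1 \pmod 4$. Thus the hypotheses of Theorem~\ref{th:lift} in degree $d = 2$ are automatic, and its conclusion forces $K \in \{\Q(\sqrt{5}),\, \Q(\zeta_7 + \zeta_7^{-1})\}$. Since $\Q(\zeta_7 + \zeta_7^{-1})$ has degree $3$, the only quadratic option is $K = \Q(\sqrt{5})$.

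For a self-contained argument not appealing to Theorem~\ref{th:lift}, I would mimic Siegel's proof of Theorem~\ref{th:8.1}. Let $Q$ be a universal $\Z$-form of rank $r$ over $K$. Because $\Z_{>0} \subseteq \cO_K^+$, the form $Q$ is automatically universal over $\Z$, so $r \geq 4$ by the $290$-theorem. When $D \equiv 2, 3 \pmod 4$ and $Q$ is classical, a parity check finishes it at once: $\lceil\sqrt{D}\rceil + \sqrt{D} \in \cO_K^+$ has $\sqrt{D}$-coefficient $1$, while for $v = a + b\sqrt{D} \in \cO_K^r$ the $\sqrt{D}$-component of $Q(v)$ equals $2B(a,b) \in 2\Z$. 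The remaining cases are attacked by fixing a totally positive $\delta \in \cO_K^{\vee,+}$ with $\Tr\delta = 1$ (easily produced inside the principal codifferent) and studying the auxiliary $\Z$-form $\Tr(\delta Q)$ on $\cO_K^r \cong \Z^{2r}$. Representations of the totally positive indecomposable units and of small-norm semiconvergents of $K$ produce many length-$1$ vectors in this lattice; one peels them off iteratively via Proposition~\ref{pr:split}, and a Siegel-style infinite descent on the trace of a minimal totally positive element outside $\Q(\sqrt{5})$ then forces $D = 5$.

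The hard part of the direct approach is this final descent. Siegel exploited the strong property ``every totally positive integer is a sum of a few squares,'' whereas here one only knows ``every totally positive integer is represented by the fixed $\Z$-form $Q$'', so one must carefully count indecomposables $\alpha$ with $\Tr(\delta \alpha) = 1$ separately in the two regimes (fundamental unit totally positive or not), paralleling the dichotomy between the classical and non-classical lower bounds in Theorem~\ref{theoremKT}. The non-classical case for $D\equiv 2,3\pmod 4$ additionally requires replacing the above parity obstruction by a more delicate analysis of the bilinear form, since the Gram entries may then be half-integers.
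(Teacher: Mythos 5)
Your first route is logically sound as far as it goes: the different of $\Q(\sqrt{D})$ is generated by $2\sqrt{D}$ (resp.\ $\sqrt{D}$), so the codifferent is always principal and the degree-$2$ case of Theorem~\ref{th:lift} does force $K=\Q(\sqrt{5})$. But the statement in question and Theorem~\ref{th:lift} are Theorems 1.1 and 1.2 of the \emph{same} paper~\cite{KY2}, so this is a reduction to an equally deep black box, not a proof. Your parity observation, on the other hand, is correct and genuinely disposes of one sub-case: for squarefree $D\equiv 2,3\pmod 4$ and classical $Q$ with integer Gram matrix $M$, writing $v=a+b\sqrt{D}$ gives $Q(v)=a^tMa+Db^tMb+2\sqrt{D}\,a^tMb$, whose $\sqrt{D}$-coefficient is even, whereas $\lceil\sqrt{D}\rceil+\sqrt{D}$ is totally positive with $\sqrt{D}$-coefficient $1$. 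Beware, though, that your claim that a universal $\Z$-form over $K$ is ``automatically universal over $\Z$'' is false: representing $n\in\Z_{>0}$ by a vector of $\cO_K^r$ gives no vector of $\Z^r$ (witness $x^2+y^2+z^2$, universal over $\Q(\sqrt{5})$ but not over $\Z$), so the appeal to the $290$-Theorem to get $r\geq 4$ is unjustified.

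The genuine gap is in the ``remaining cases''. The paper's proof does not run a Siegel-style descent, and I do not see how yours could: Siegel's descent hinges on every indecomposable being the square of a unit, which follows from the very strong hypothesis ``everything is a sum of squares'' but is unavailable when one only knows ``everything is represented by one fixed $\Z$-form $Q$''. What the paper actually does is: (i) use the tensor-product identification $(\cO_K^r,\Tr(\delta Q))\cong(\cO_K\otimes\Z^r,T_\delta\otimes Q)$ together with Kitaoka's E-type theorem~\cite{Kit} to show minimal vectors are split, whence any $\alpha\in\cO_K^+$ minimizing $\Tr(\delta\alpha)$ for some $\delta\in\cO_K^{\vee,+}$ is a square and a unit; (ii) observe that for a fixed totally positive generator $\delta$ of the codifferent, the set of $\alpha\in\cO_K^+$ with $\Tr(\delta\alpha)=1$ is an arithmetic progression of length at least $\sqrt{D}-1$, all of whose members are therefore units; and (iii) invoke Newman's theorem~\cite{New} that a real quadratic field contains at most $4$ units in arithmetic progression, which bounds $D$ and leaves a finite check. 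None of these three ingredients --- the E-type/split-minimal-vector argument, the arithmetic-progression structure of the trace-one elements, and Newman's bound --- appears in your sketch. Note also that peeling off $\langle 1\rangle$'s via Proposition~\ref{pr:split} requires $\Tr(\delta Q)$ to be classical and in any case only yields a rank lower bound as in Theorem~\ref{theoremKT}, not the classification of trace-minimizers as units that the argument needs; so the non-classical case and the case $D\equiv 1\pmod 4$ are not actually handled by your proposal.
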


	For the proof,
	we again want to work with ``\emph{minimal vectors}" of the corresponding quadratic $ \cO $-lattice $ (L, Q) $, as in the proof of Theorem~\ref{theoremKT}. For a~$ \Z $-lattice, they are the vectors $ v $ such that $ Q(v) $ is the smallest represented positive integer. This does not make a~good sense over $ \cO_K $, so we will consider $ \Tr_{K/\Q}(Q(v)) $ (which is a~positive integer).

	Recall that the codifferent is defined as
\[
	\cO_K^\vee = \left\{\delta \in K\mid \Tr(\delta\alpha)\in \Z\,\forall \alpha \in \cO_K\right\}.
\]
	A little more generally, we can look at $ \Tr(\delta Q(v)) $ for $ \delta \in \cO_K^{\vee, +} $, which is still a~non-negative integer. We will be interested in the vectors minimizing this.

	As another tool, let's introduce
	the \emph{tensor product} (see~\cite[Section 4]{KY2} for details for the following discussion) $ (L_1\otimes L_2, Q_1\otimes Q_2) $ of two quadratic $ \Z $-lattices $ (L_1, Q_1) $, $ (L_2, Q_2) $.
	If we fix $ \Z $-bases $v_i$ and $w_j$ for $L_1$ and $L_2$, then $L_1\otimes L_2$ is the $\Z$-lattice whose $ \Z $-basis consists of formal elements $v_i\otimes w_j$.
	The quadratic form is then defined so that
\[
	(Q_1\otimes Q_2)(v\otimes w) = Q_1(v)Q_2(w)\text{ for } v\in L_1, w\in L_2.
\]
	This gives an integral $ \Z $-lattice if at least one of the $ Q_i $ is classical.

\medskip
	An important observation is that if $ Q $ is a~$ \Z $-form of rank $ r $, then $ (\cO_K^r, \Tr(\delta Q)) $ is isomorphic to the tensor product $ (\cO_K\otimes \Z^r, T_\delta \otimes Q) $, where $ T_\delta(x) = \Tr(\delta x^2) $ (to be more precise, we need to identify $ \cO_K $ with $ \Z^d $ by choosing an integral basis).

	A $ \Z $-lattice $ L $ is of \emph{E-type} if for each $ \Z $-lattice $ M $, all the minimal vectors of $ L\otimes M $ are split, i.e., of the form $ v\otimes w $ for $ v \in L $, $ w \in M $. A theorem of Kitaoka~\cite[Theorems 7.1.1, 7.1.2, 7.1.3]{Kit} states that each lattice of rank $ \leq 43 $ is of E-type. The form $ T_\delta $ has rank $ d $, so we are fine when $ d \leq 43 $.

\medskip

	Let $ Q $ be a~universal $ \Z $-form (over a~number field $ K $ of degree $ d \leq 43 $), and let's us introduce the following condition for $ \alpha \in \cO_K^+ $:
\begin{equation}\label{eqConditionA}
		\exists \delta \in \cO_K^{\vee, +}:\;\Tr(\delta\alpha) = \min_{\beta \in \cO_K^+}\Tr(\delta\beta).
\end{equation}
	One uses minimal vectors to show:
	
\begin{itemize}
		\item If $ \alpha $ satisfies~(\ref{eqConditionA}), then $ \alpha $ is a~square (and indecomposable).
		\item Every totally positive unit is a~square, and so there are units of all signatures in $ \cO_K $.
		\item If $ \alpha $ satisfies~(\ref{eqConditionA}), then $ \alpha $ is a~unit.
	
\end{itemize}

\begin{proof}[Sketch of proof of Theorem~$\ref{theoremKYLifting}$]
		Let $ K = \Q(\sqrt{D}) $, so that $ \cO_K = \Z[\omega_D] $, where
\[
		\omega_D =
\begin{cases}
			\sqrt{D},& D \equiv 2,3 \pmod 4,\\
			\frac{1+\sqrt{D}}{2},& D \equiv 1 \pmod 4.
\end{cases}
\]
		We know that $ \cO_K^\vee = \delta\cO_K $ for some totally positive $ \delta $ (because we already established that we have units of all signatures). The elements $ \alpha \in \cO_K^+ $ such that $ \Tr(\delta\alpha) = 1 $ form a~convex set in $ \Z^{d-1} = \Z $ (this set can be described using interlacing polynomials). Hence they form an arithmetic progression of length at least $ \sqrt{D}-1 $. By the remarks preceding the proof, they are all units. But we cannot have more than 4 units in an arithmetic progression in a~real quadratic field by a~result of Newman~\cite{New}. Now only finitely many values of $ D $ need to be examined to finish the proof.
\end{proof}

	Theorem~\ref{theoremKY} is a~partial extension of Theorem~\ref{theoremKYLifting} to higher degrees. How to proceed further? There may be some clever approaches using geometry of numbers, elements of trace 2 in the codifferent, and perhaps even different generators of $ \cO_K^\vee $, but at this point we do not know much more than Theorem~\ref{th:lift} above (although we have a~promising work in progress in this direction).

	As a~few final results, let's mention:
	
\begin{itemize}
		\item Let $K\neq \Q,~\Q(\sqrt 5)$ be a~totally real number field. Then there is no classical universal $ \Z $-form over $K$ of rank 3, 4, or 5~\cite[Corollary 3.4]{KY2}.

		\item Let $F$ be a~totally real number field, $L$ an $\cO_F$-lattice, and $d,m\in\Z_{>0}$.
		There are at most finitely many totally real number fields $K\supset F$ of degree $d=[K:\Q]$ such that $L\otimes \cO_K$
		represents all elements of $m \cO_K^+$~\cite[Theorem 2]{KY3}.
	
\end{itemize}

	\subsection*{Acknowledgments} I am very grateful to Mikul\' a\v s Zindulka for typesetting my handwritten notes that formed the first draft of this paper, and to Martina Vaváčková for LaTeX help.

	Further, I thank Jakub Krásenský (in particular, for his valuable help with the dyadic considerations in Section~\ref{sec:5}), Mentzelos Melistas and Pavlo Yatsyna, as well as the anonymous referee, for several very helpful comments.

	The author was supported by Czech Science Foundation (GA\v CR) grant 21-00420M and Charles University Research
	Centre program UNCE/SCI/022.

\end{document}